
\documentclass{amsart}
\usepackage{amssymb}
\usepackage{amsthm}
\usepackage{amsmath,amscd}
\usepackage[mathscr]{euscript}
\usepackage[all]{xy}
\usepackage[utf8]{inputenc}
\usepackage[textwidth=15cm,hcentering]{geometry}
\usepackage{lmodern}
\usepackage[T1]{fontenc}
\usepackage[breaklinks=true]{hyperref}

\title{A model structure on internal categories in simplicial sets}

\author{Geoffroy Horel}


\address{Mathematisches Institut\\
Einsteinstrasse 62\\
D-48149 Münster\\
Deutschland}

\email{geoffroy.horel@gmail.com}

\thanks{The author was supported by Michael Weiss's Humboldt professor grant.}

\keywords{internal categories, complete Segal spaces, infinity categories}


\newtheorem{theo}{Theorem}[section]
\newtheorem{lemm}[theo]{Lemma}
\newtheorem{prop}[theo]{Proposition}
\newtheorem{coro}[theo]{Corollary}

\theoremstyle{definition}
\newtheorem{defi}[theo]{Definition}

\newtheorem{example}[theo]{Example}
\newtheorem{rem}[theo]{Remark}

\numberwithin{equation}{section}

\newcommand{\op}{^{\mathrm{op}}}
\newcommand{\cat}{\mathbf}
\newcommand{\on}{\operatorname}
\newcommand{\Cat}{\cat{Cat}}
\newcommand{\ICat}{\cat{ICat}}
\newcommand{\id}{\mathrm{id}}
\newcommand{\Int}{\on{Int}}
\newcommand{\Ob}{\on{Ob}}
\newcommand{\Ar}{\on{Ar}}
\newcommand{\Sp}{\cat{S}}
\renewcommand{\L}{\mathbb{L}}
\newcommand{\R}{\mathbb{R}}
\newcommand{\Map}{\on{Map}}
\newcommand{\map}{\on{map}}
\newcommand{\goto}[1]{\stackrel{#1}{\longrightarrow}}

\begin{document}

\maketitle

\begin{abstract}
We put a model structure on the category of categories internal to simplicial sets. The weak equivalences in this model structure are preserved and reflected by the nerve functor to bisimplicial sets with the complete Segal space model structure. This model structure is shown to be a model for the homotopy theory of infinity categories. We also study the homotopy theory of internal presheaves over an internal category.
\end{abstract}

\section*{Introduction}

Infinity-categories are category-like objects in which one can do homotopy theory. There are nowadays a plethora of available definitions of infinity-categories in the literature. The most famous are quasicategories, complete Segal spaces, simplicial categories, Segal categories, relative categories. Each one of these models is organized into a model category which gives a structured way to encode the homotopy theory of infinity-categories. It has been shown by various people (Bergner, Joyal and Tierney, Barwick and Kan, Lurie) that any two of the above models are connected by a zig-zag of Quillen equivalences meaning that all these models are equivalent. The relevant references are~\cite{joyalquasi,lurietopos,bergnerthree,rezkmodel,joyaltierneyquasi,barwickrelative}

The goal of this paper is to introduce yet another model category presenting the homotopy theory of infinity categories. It is a model structure on the category of categories internal to simplicial sets. An internal category in simplicial sets is a diagram of simplicial sets $\Ar(C)\rightrightarrows \Ob(C)$ together with a unit map $\Ob(C)\to \Ar(C)$ and a composition map $\Ar(C)\times_{\Ob(C)}\Ar(C)\to \Ar(C)$ which suitably generalizes the notion of a category. Equivalently, an internal category in simplicial sets is a simplicial object in the category of small categories. Applying the nerve functor degreewise, we can see the category of internal categories as a full subcategory of the category of bisimplical sets. We define a morphism between internal categories to be a weak equivalence if it is sent to one in the model structure of complete Segal spaces. We show that those maps are the weak equivalences of a model structure. This model structure is transferred from the \emph{projective} model structure of complete Segal spaces (as opposed to the injective model structure used in~\cite{rezkmodel}). This result answers a question of Mike Shulman on Mathoverflow (see~\cite{shulmaninternal}). This model structure inherits some of the good formal properties of the model category of complete Segal spaces. In particular, it is a left proper simplicially enriched model category. 

In this paper, we also study the homotopy theory of internal presheaves over a fixed internal category. We put a model structure on this category which generalizes the projective model structure on simplicial presheaves over a simplicial category. We also prove that this model structure is homotopy invariant in the sense that a weak equivalence of internal categories induces a Quillen equivalence of the presheaf categories.

There are many interesting examples of internal categories. For instance Rezk in~\cite{rezkmodel} defines a nerve from relative categories to bisimplicial sets and this functor factors through the category of internal categories. In particular, the main result of~\cite{barwickpartial} shows that a levelwise fibrant replacement of the Rezk nerve of a partial model category is a fibrant internal category in our sense. Simplicially enriched categories are also particular internal categories and we show that the inclusion of the category of simplicial categories in the category of internal categories preserves the class of weak equivalences on both categories and induces an equivalence of the underlying infinity-categories. 

Another source of examples comes from the Grothendieck construction of simplicial presheaves. If $C$ is a simplicially enriched category and $F$ is a presheaf on $C$ with value in simplicial sets, the Grothendieck construction of $F$ is very naturally an internal category. Indeed, we can declare $Gr(F)$ to be the internal category
\[Gr(F)=\bigsqcup_{c,d \in\Ob(C)}F(c)\times \map_C(c,d)\rightrightarrows \bigsqcup_{c\in\Ob(C)}F(c)\]
where the source map is given by the projection and the target map is given by the action of $C$ on $F$. To our knowledge, there is no good model for the Grothendieck construction which remains in the world of simplicial categories. 

\subsection*{Overview of the paper.}

The first section contains a few reminders on model categories and their left Bousfield localizations.

The second section describes a projective version of Rezk's model structure of complete Segal spaces. It is a model category structure on simplicial spaces which is Quillen equivalent to Rezk's model category of complete Segal spaces but in which the cofibrations are the projective cofibrations (i.e. the maps with the left lifting property against levelwise trivial fibrations) as opposed to the injective cofibrations that are used in~\cite{rezkmodel}. We study the fibrant objects in this model structure (proposition~\ref{prop-Segal fibrant} and proposition~\ref{prop-Rezk fibrant}) and we generalize the theory of Dwyer-Kan equivalences in this context (proposition~\ref{prop-Dwyer Kan equivalences are Rezk equivalences}).

The third section contains background material on the main object of the paper, namely the category $\ICat$ of internal categories in the category of simplicial sets.

The fourth section is a proof of a technical lemma (lemma~\ref{lemm-key lemma}) that is the key step in the proof of the existence of the model structure on $\ICat$.

The fifth section contains the construction of the model structure on $\ICat$ and the proof of the equivalence with the model category of complete Segal spaces. The main theorem is theorem~\ref{theo-main}.

The sixth section studies the category of internal presheaves on an internal category. In good cases, we put a model structure on this category which generalizes the projective model structure on simplicial presheaves on a simplicial category. We also show that a map between internal categories induces a Quillen adjunction between the categories of internal presheaves, and that this adjunction is a Quillen equivalence if the map of internal categories was a weak equivalence (see theorem~\ref{theo-invariance of presheaf category}).

The seventh section is devoted to the study of the inclusion functor from simplicially enriched categories to internal categories. This functor is not a Quillen functor but we prove (theorem~\ref{theo-Int is an equivalence}) that it induces an equivalence between the infinity-category of simplicial categories and the infinity-category of internal categories. 

\subsection*{Acknowledgements.}

I wish to thank Pedro Boavida de Brito, Matan Prezma and Clark Barwick for helpful conversations. I also want to thank the anonymous referee, Dimitri Ara, Viktoriya Ozornova and Mike Shulman for several useful comments on the first drafts of this paper.

\subsection*{Notations.}

We write $\Sp$ for the category of simplicial sets. We often say space instead of simplicial set. The category $\Sp$ will always be equipped with its standard model structure. The points of an object $X$ of $\Sp$ are by definition the $0$-simplices of $X$.

We write $s\Sp$ for the category of simplicial objects in $\Sp$. We implicitly identify the category $\Sp$ with the full subcategory of $s\Sp$ on constant diagrams.

The category $\Cat$ is the category of small categories.

If $\cat{C}$ is a category and $c$ is an object of $\cat{C}$, we denote by $\cat{C}_{/c}$ the overcategory of $c$.

For $k$ a natural number, we denote by $[k]$ the poset $\{0 \leq 1\leq\ldots\leq k\}$ seen as an object of $\Cat$. The object $\Delta[k]$ in $\Sp$ is the object representing the functor $X\mapsto X_k$. The object $\Delta[k]$ is the nerve of the discrete category $[k]$. We usually write $*$ instead of $\Delta[0]$.

We denote by $F(k)$ the functor $\Delta\op\to \Sp$ sending $[n]$ to the discrete simplicial set $\Cat([n],[k])$.

We generically denote by $\cong$ an isomorphism and by $\simeq$ a weak equivalence in the ambient model category. 

We generically denote by $Q$ and $R$ a cofibrant and fibrant replacement functor. In this paper all model categories are cofibrantly generated which ensures that $Q$ and $R$ exist.

If $F$ is a left Quillen functor, we denote by $\mathbb{L} F$ the functor $F\circ Q(-)$ where $Q$ is any cofibrant replacement functor in the source of $F$. By Ken Brown's lemma this is well-defined up to a weak equivalence. Similarly, if $G$ is a right Quillen functor, we denote by $\mathbb{R} G$ the functor $G\circ R$ where $R$ is any fibrant replacement functor in the source of $G$. Note that if $G$ happens to preserve all weak equivalences, then $\R G$ is weakly equivalent to $G$. We implicitly use this fact in various places in this paper.

\subsection*{Nine model categories.}

To help the reader keep track of the various model categories defined in this paper, we have the following diagram of right Quillen functors. In this diagram, all the horizontal functors are right Quillen equivalences which preserve and reflect weak equivalences and the vertical functors are right adjoint to left Bousfield localizations. The number next to each category refers to the section where the model structure is defined.

\[
\xymatrix{
s\Sp_{inj}\ar[r]&s\Sp_{proj}(\S\ref{ss projective})&\ICat^{LW}(\S\ref{Icat projective}\ar[l])\\
\cat{SS}_{inj}\ar[r]\ar[u]&\cat{SS}_{proj}(\S\ref{ss Segal})\ar[u]&\ICat^S(\S\ref{Icat Segal})\ar[l]\ar[u]\\
\cat{CSS}_{inj}\ar[r]\ar[u]&\cat{CSS}_{proj}(\S\ref{ss Rezk})\ar[u]&\ICat(\S\ref{Icat Rezk})\ar[l]\ar[u]
}
\]

\section{A few facts about model categories}

\subsection{Cofibrant generation.}

The following definition is standard terminology.

\begin{defi}
Let $\cat{X}$ be a cocomplete category and $I$ a set of maps in $\cat{X}$. The $I$-cell complexes are the elements of the smallest class of maps in $\cat{X}$ containing $I$ and closed under pushout and transfinite composition. The $I$-fibrations are the maps with the right lifting property against $I$. The $I$-cofibrations are the maps with the left lifting property against the $I$-fibrations. 
\end{defi}

Recall that the $I$-cofibrations are the retracts of $I$-cell complexes. One also shows that the $I$-fibrations are exactly the maps with the right lifting property against the $I$-cofibrations. All these facts can be found in appendix A of~\cite{lurietopos}.

A model category $\cat{C}$ is said to be cofibrantly generated if there are sets $I$ and $J$ in $\cat{C}^{[1]}$ whose members have a small source and such that the fibrations of $\cat{C}$ are the $J$-fibrations and the trivial fibrations are the $I$-fibrations. Recall that a cofibrantly generated model category has functorial factorizations given by the small object argument. In particular, it has a cofibrant replacement functor and a fibrant replacement functor.

A model category $\cat{C}$ is said to be combinatorial if its underlying category is locally presentable and if it is cofibrantly generated.

For future reference, we recall the following classical theorem of transfer of model structures:

\begin{theo}\label{theo-transferred model structure}
Let $F:\cat{X}\leftrightarrows\cat{Y}:U$ be an adjunction between complete and cocomplete categories where $\cat{X}$ has a cofibrantly generated model structure in which the set of generating cofibrations (resp. trivial cofibrations) is denoted by $I$ (resp. $J$). Assume that 
\begin{itemize}
\item $U$ preserves filtered colimits.
\item $U$ sends pushouts of maps in $FI$ to $I$-cofibrations and pushouts of maps in $FJ$ to $J$-cofibrations.
\end{itemize}
Then there is a model structure on $\cat{Y}$ whose fibrations (resp. weak equivalences) are the maps that are sent to fibrations (resp. weak equivalences) by $U$. Moreover, the functor $U$ preserves cofibrations.
\end{theo}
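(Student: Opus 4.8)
The plan is to invoke the recognition theorem for cofibrantly generated model categories (appendix A of \cite{lurietopos}), taking $FI$ as the candidate generating cofibrations, $FJ$ as the candidate generating trivial cofibrations, and letting $W$ be the class of maps $f$ of $\cat{Y}$ such that $Uf$ is a weak equivalence of $\cat{X}$. Since $U$ is a functor, $W$ inherits the two-out-of-three property and closure under retracts from the weak equivalences of $\cat{X}$. The first observation I would record is that, by the adjunction $F\dashv U$, a map $f$ of $\cat{Y}$ has the right lifting property against $FI$ (resp. $FJ$) if and only if $Uf$ has the right lifting property against $I$ (resp. $J$), that is, if and only if $Uf$ is a trivial fibration (resp. a fibration) of $\cat{X}$. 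Thus the $FJ$-injective maps are exactly the intended fibrations and the $FI$-injective maps exactly the intended trivial fibrations. The set-theoretic hypothesis is then immediate: because $U$ preserves filtered colimits, the natural isomorphism $\on{Hom}_{\cat{Y}}(FA,-)\cong \on{Hom}_{\cat{X}}(A,U(-))$ shows that $FA$ is small whenever $A$ is, so the domains of $FI$ and $FJ$ permit the small object argument.

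Next I would dispatch the formal comparisons required by the theorem. If $f$ is $FI$-injective then $Uf$ is a trivial fibration, hence a weak equivalence and a fibration, so $f$ lies in $W$ and is $FJ$-injective; and if $f$ is both in $W$ and $FJ$-injective then $Uf$ is a weak equivalence and a fibration, hence a trivial fibration, so $f$ is $FI$-injective. To see that every $FJ$-cofibration is an $FI$-cofibration I would first check that $FJ$ itself consists of $FI$-cofibrations: given $j\in J$ and an $FI$-injective map $f$, a lifting problem of $Fj$ against $f$ transposes under $F\dashv U$ to a lifting problem of $j$ against $Uf$; since $j\in J\subseteq I\text{-cof}$ and $Uf$ is a trivial fibration, this transposed problem has a solution, which transposes back to the desired lift. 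As the class of $FI$-cofibrations is closed under pushout, transfinite composition and retract, it follows that every $FJ$-cofibration is an $FI$-cofibration.

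The remaining, and in my view the crucial, step is to show that $FJ$-cofibrations lie in $W$, and it is here that the second hypothesis on $U$ does the real work. After refining a cell presentation so that cells are attached one at a time, an $FJ$-cell complex is a transfinite composition of pushouts of single maps of $FJ$. By hypothesis $U$ sends each such pushout to a $J$-cofibration, i.e. a trivial cofibration of $\cat{X}$; since $U$ preserves filtered colimits it sends the whole transfinite composition to the corresponding transfinite composition in $\cat{X}$; and trivial cofibrations are closed under transfinite composition. Hence $U$ carries every $FJ$-cell complex, and therefore, by functoriality of $U$ and closure of weak equivalences under retracts, every $FJ$-cofibration, to a trivial cofibration and in particular to a weak equivalence. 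This completes the hypotheses of the recognition theorem and produces the model structure, with fibrations and weak equivalences as described. Finally, running the identical transfinite-composition argument with $FI$ in place of $FJ$ — now using that $U$ sends pushouts of maps in $FI$ to $I$-cofibrations and that $I$-cofibrations are closed under transfinite composition and retract — shows that $U$ sends every $FI$-cofibration to an $I$-cofibration, which is exactly the assertion that $U$ preserves cofibrations.
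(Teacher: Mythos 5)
Your proof is correct and complete. The paper itself gives no argument for this theorem --- it simply cites Fresse's Proposition 11.1.4 --- and what you have written is precisely the standard transfer argument contained in such references: apply the recognition theorem for cofibrantly generated model structures to $FI$, $FJ$ and $W=U^{-1}(\text{weak equivalences})$, transpose lifting problems across the adjunction to identify $FJ$-injectives and $FI$-injectives with the intended fibrations and trivial fibrations, deduce smallness of the domains of $FI$ and $FJ$ from preservation of filtered colimits, and use the hypothesis on pushouts to show that $U$ sends $FJ$-cell complexes to transfinite composites of $J$-cofibrations, with the retract argument extending this to all $FJ$-cofibrations. You also correctly exploited the one point where this statement's hypotheses are stronger than the most classical transfer theorems: since $U$ sends pushouts of $FJ$ to $J$-cofibrations (not merely to weak equivalences), the transfinite composition step goes through without any extra hypothesis, because $J$-cofibrations, unlike general weak equivalences, are closed under transfinite composition; the same mechanism, run with $I$ in place of $J$, is exactly what yields the final claim that $U$ preserves cofibrations.
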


\begin{proof}
This is proved for instance in~\cite[Proposition 11.1.4]{fressemodules}.
\end{proof}

\subsection{Simplicial model categories.}

All the model categories in this work will be simplicial model categories. If $\cat{C}$ is a simplicial model category, we denote by $\Map_{\cat{C}}(-,-)$, or $\Map(-,-)$ if there is no possible ambiguity, the bifunctor $\cat{C}\op\times\cat{C}\to\Sp$ giving the simplicial enrichment.

If $\cat{C}$ is a simplicial model category, Ken Brown's lemma implies that the functor $\Map_{\cat{C}}(-,-)$ preserves weak equivalences between pairs of objects of $\cat{C}$ whose first component is cofibrant and second component is fibrant. If $\cat{C}$ is cofibrantly generated, we write $\mathbb{R}\Map_{\cat{C}}(-,-)$ for the functor $\Map_{\cat{C}}(Q-,R-)$ where $Q$ and $R$ denote respectively a cofibrant and fibrant replacement functor in $\cat{C}$. 

If $\cat{C}$ is a simplicial cofibrantly generated model category, then it admits a simplicial cofibrant replacement functor and a simplicial fibrant replacement functor (see for instance~\cite[Theorem 6.1.]{blumberghomotopical}). In the following, we will always assume that $Q$ and $R$ are simplicial which implies that the functor $\mathbb{R}\Map_{\cat{C}}(-,-)$ is a simplicial and weak equivalence preserving functor $\cat{C}\op\times\cat{C}\to\Sp$.

\begin{prop}\label{prop-derived adjunction}
Let $F:\cat{C}\leftrightarrows\cat{D}:G$ be a simplicial Quillen adjunction between cofibrantly generated simplicial model categories. Then, in the category of $\Sp$-enriched functors from $\cat{C}\op\times\cat{D}$ to $\Sp$, there is a zig-zag of natural transformations between $\mathbb{R}\Map_{\cat{D}}(\mathbb{L}F-,-)$ and $\mathbb{R}\Map_{\cat{C}}(-,\mathbb{R}G-)$  which is objectwise a weak equivalence.
\end{prop}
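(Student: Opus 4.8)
The plan is to build the zig-zag around the \emph{enriched} adjunction isomorphism as its central pivot, flanked by two comparison maps whose only role is to discard a redundant replacement. Throughout, recall that in this paper $\Sp=\cat{S}$ is the category of simplicial sets, so ``$\Sp$-enriched'' means simplicially enriched and ``objectwise weak equivalence'' means weak equivalence of simplicial sets. The hypothesis that $F\dashv G$ is a \emph{simplicial} adjunction supplies a natural isomorphism of simplicial mapping objects
\[
\Map_{\cat{D}}(FX,Y)\cong\Map_{\cat{C}}(X,GY),
\]
and we use the \emph{simplicial} cofibrant and fibrant replacement functors $Q_{\cat{C}},Q_{\cat{D}},R_{\cat{C}},R_{\cat{D}}$ whose existence was recalled above. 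Unwinding the definition $\R\Map_{\cat{E}}(-,-)=\Map_{\cat{E}}(Q-,R-)$, the two functors to be compared are $\Map_{\cat{D}}(Q_{\cat{D}}F Q_{\cat{C}}-,R_{\cat{D}}-)$ and $\Map_{\cat{C}}(Q_{\cat{C}}-,R_{\cat{C}}G R_{\cat{D}}-)$.

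First I would strip the two redundant replacements. Since $F$ is left Quillen, $F Q_{\cat{C}}C$ is already cofibrant, so the counit $Q_{\cat{D}}F Q_{\cat{C}}C\to F Q_{\cat{C}}C$ is a weak equivalence between cofibrant objects; as $R_{\cat{D}}D$ is fibrant, the fact recalled above, that $\Map$ preserves weak equivalences of cofibrant-fibrant pairs, gives a natural weak equivalence
\[
\Map_{\cat{D}}\bigl(F Q_{\cat{C}}C,R_{\cat{D}}D\bigr)\goto{\sim}\R\Map_{\cat{D}}(\L F C,D).
\]
Dually, since $G$ is right Quillen, $G R_{\cat{D}}D$ is already fibrant, so the unit $G R_{\cat{D}}D\to R_{\cat{C}}G R_{\cat{D}}D$ is a weak equivalence between fibrant objects, and applying $\Map_{\cat{C}}(Q_{\cat{C}}C,-)$ yields a natural weak equivalence
\[
\Map_{\cat{C}}\bigl(Q_{\cat{C}}C,G R_{\cat{D}}D\bigr)\goto{\sim}\R\Map_{\cat{C}}(C,\R G D).
\]
Evaluating the enriched adjunction isomorphism at the cofibrant object $Q_{\cat{C}}C$ and the fibrant object $R_{\cat{D}}D$ identifies the two left-hand sides,
\[
\Map_{\cat{D}}\bigl(F Q_{\cat{C}}C,R_{\cat{D}}D\bigr)\cong\Map_{\cat{C}}\bigl(Q_{\cat{C}}C,G R_{\cat{D}}D\bigr).
\]
Concatenating the central isomorphism with the two weak equivalences above then gives the required three-step zig-zag, which is objectwise a weak equivalence of simplicial sets.

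It remains to check that this zig-zag lives in the category of $\Sp$-enriched functors $\cat{C}\op\times\cat{D}\to\Sp$. Because the chosen replacements are simplicial functors, each intermediate node is a $\Sp$-enriched functor of the pair $(C,D)$, and the counit, the adjunction isomorphism, and the unit are all $\Sp$-enriched natural transformations. The one point that needs genuine care is exactly this enrichment: I must verify that the adjunction bijection is an isomorphism of simplicial \emph{sets} (rather than merely of their vertex sets) and is natural as a map of $\Sp$-enriched functors, which is precisely what the word \emph{simplicial} in the hypothesis guarantees, and that the maps induced by the simplicial replacements are likewise enriched-natural. Everything else is the formal bookkeeping of peeling the two superfluous replacements off the derived mapping spaces.
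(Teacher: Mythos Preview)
Your proof is correct and follows essentially the same route as the paper: the paper's zig-zag is exactly
\[
\Map_{\cat{D}}(QFQ-,R-)\leftarrow \Map_{\cat{D}}(FQ-,R-)\cong\Map_{\cat{C}}(Q-,GR-)\to\Map_{\cat{C}}(Q-,RGR-),
\]
with the same justifications (that $FQX$ is cofibrant and $GRY$ is fibrant). You are slightly more explicit than the paper in distinguishing the replacement functors on $\cat{C}$ versus $\cat{D}$ and in verifying that the zig-zag is one of $\Sp$-enriched natural transformations; the paper leaves this implicit. (A minor terminological quibble: the maps $Q_{\cat{D}}\to\id$ and $\id\to R_{\cat{C}}$ are not really ``counit'' and ``unit'' in the adjunction sense, just the structural replacement transformations.)
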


\begin{proof} Recall that $\mathbb{L}F=FQ$ and $\mathbb{R}G=GR$. The natural zig-zag is given by
\[\mathbb{R}\Map_{\cat{D}}(FQ-,-)=\Map_{\cat{D}}(QFQ-,R-)\leftarrow \Map_{\cat{D}}(FQ-,R-)\cong\Map_{\cat{C}}(Q-,GR-)\]
\[\to\Map_{\cat{C}}(Q-,RGR-)=\mathbb{R}\Map_{\cat{C}}(-,GR-)\]
in which the backward arrow is given by the natural transformation $Q\to\id_{\cat{D}}$, the forward arrow is given by the natural transformation $\id_{\cat{C}}\to R$ and the middle isomorphisms comes from the fact that $(F,G)$ is a simplicial adjunction. Using the fact that $F$ is a left Quillen functor, we find that $FQX$ is cofibrant for any $X$ which forces the backward map to be objectwise a weak equivalence. Similarly, using the fact that $G$ is right Quillen, we show that the forward map is objectwise a weak equivalence.
\end{proof}

Now, we want to prove that the property of being simplicial for a model category is preserved under transfer along simplicial adjunction.

\begin{prop}\label{prop-transferred is simplicial}
Let $F:\cat{X}\leftrightarrows\cat{Y}:U$ be a simplicial adjunction that satisfies the hypothesis of theorem~\ref{theo-transferred model structure}. Then the model structure on $\cat{Y}$ is simplicial.
\end{prop}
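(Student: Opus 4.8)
The plan is to verify the pushout-product axiom (SM7) in its pullback-cotensor form, exploiting the fact that $U$ is a simplicial right adjoint and that it detects both fibrations and weak equivalences. Both $\cat{X}$ and $\cat{Y}$ are tensored, cotensored and enriched over $\Sp$, and $(F,U)$ is an enriched adjunction; hence $\cat{Y}$ already carries a simplicial structure, and what remains is its compatibility with the transferred model structure. Explicitly, the axiom to be checked is: for every fibration $p\colon X\to Y$ in $\cat{Y}$ and every cofibration $j\colon K\to L$ in $\Sp$, the induced map
\[
X^L\longrightarrow X^K\times_{Y^K}Y^L
\]
is a fibration in $\cat{Y}$, and it is a trivial fibration as soon as $j$ or $p$ is a weak equivalence.

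First I would apply the functor $U$ to this map. Since $U$ is a right adjoint it preserves all limits, in particular the pullback defining the target; since $U$ is a simplicial functor it preserves cotensors, so that $U(X^K)\cong (UX)^K$ naturally in $K$. Combining these two facts, the image under $U$ of the map above is naturally isomorphic to
\[
(UX)^L\longrightarrow (UX)^K\times_{(UY)^K}(UY)^L,
\]
which is exactly the pullback-cotensor map associated with the morphism $Up\colon UX\to UY$ and the cofibration $j$.

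Next I would invoke the model structure on $\cat{X}$. By construction of the transferred model structure, $Up$ is a fibration in $\cat{X}$, and it is a trivial fibration precisely when $p$ is one, because $U$ detects both fibrations and weak equivalences. Since $\cat{X}$ is itself a simplicial model category, SM7 holds there, so the displayed map is a fibration in $\cat{X}$, which is moreover trivial whenever $j$ is a trivial cofibration or $Up$ is a trivial fibration. Applying once more the defining property of the transfer—a map of $\cat{Y}$ is a fibration (resp.\ a weak equivalence) exactly when $U$ sends it to one—I conclude that $X^L\to X^K\times_{Y^K}Y^L$ is a fibration in $\cat{Y}$, and a trivial fibration in the two degenerate cases. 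This establishes SM7 for $\cat{Y}$.

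I expect the only delicate point to be the bookkeeping of the first step: one must check that $U$ carries the pullback-cotensor construction in $\cat{Y}$ to the corresponding construction in $\cat{X}$. This is purely formal, resting on $U$ preserving limits and cotensors, but it is where all the compatibility between the enrichment and the adjunction is used, and it is worth spelling out the natural isomorphism $U(X^K)\cong(UX)^K$ coming from the simplicial adjunction. Once this identification is in place, the remaining steps are immediate consequences of the fact that weak equivalences and fibrations in $\cat{Y}$ are created by $U$.
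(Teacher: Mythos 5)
Your proof is correct, but it takes a genuinely different route from the paper's. The paper verifies SM7 in its enriched-hom form and only on generating cofibrations: it first observes that the transferred structure on $\cat{Y}$ has $FI$ and $FJ$ as generating (trivial) cofibrations, reduces the corner-map condition to these using the fact that $\Map_{\cat{Y}}(-,-)$ turns colimits in the first variable into limits, and then uses the simplicial adjunction isomorphism $\Map_{\cat{Y}}(FA,E)\cong\Map_{\cat{X}}(A,UE)$ to translate the corner-map condition into $\cat{X}$, where it holds because $\cat{X}$ is a simplicial model category and $U$ of a fibration is a fibration. You instead verify the pullback-cotensor form of SM7 for an \emph{arbitrary} fibration $p$ of $\cat{Y}$, pushing the entire corner map down to $\cat{X}$ via $U$ --- using that $U$, being an enriched right adjoint, preserves pullbacks and cotensors --- and then invoking the fact that $U$ creates fibrations and weak equivalences in the transferred structure. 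Your route avoids identifying the generating cofibrations of $\cat{Y}$ and the saturation argument implicit in the reduction to generators; in exchange it leans on the standard (but worth citing) equivalence of the three adjoint forms of SM7 for a tensored-and-cotensored simplicial category, and on the isomorphism $U(X^K)\cong (UX)^K$. One small point of precision: that isomorphism is not a consequence of $U$ being a simplicial functor alone --- a mere simplicial functor need not preserve cotensors; it follows from the enriched adjunction together with the enriched Yoneda lemma --- but you correctly flag at the end that this is where the adjunction enters, so the argument is complete.
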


\begin{proof}
Let $I$ (resp. $J$) be a set of generating cofibrations (resp. trivial cofibrations) of $\cat{X}$. The model structure on $\cat{Y}$ has $FI$ (resp. $FJ$) as generating cofibrations (resp. trivial cofibrations). Indeed, it is obvious that the fibrations (resp. trivial fibrations) are the maps with the right lifting property against $FJ$ (resp. $FI$) moreover, since $U$ preserves filtered colimits, the sources of the maps in $FI$ and $FJ$ are small. Now, we prove that $\cat{Y}$ is simplicial. Since the mapping spaces $\Map_{\cat{Y}}(-,-)$ preserve colimits in the first variable, it suffices to check that for each generating cofibration $f:C\to D$ and fibration $E\to F$ in $\cat{Y}$, the map
\[\Map_{\cat{Y}}(D,F)\to\Map_{\cat{Y}}(C,F)\times_{\Map_{\cat{Y}}(C,E)}\Map_{\cat{Y}}(D,E)\]
is a fibration. But $f:C\to D$ is $F(g)$ for $g:A\to B$ an element of $I$. Therefore, we want to prove that
\[\Map_{\cat{Y}}(FB,F)\to\Map_{\cat{Y}}(FA,F)\times_{\Map_{\cat{Y}}(FA,E)}\Map_{\cat{Y}}(FB,E)\]
is a fibration. Using the fact that the adjunction $(F,U)$ is simplicial, this map is isomorphic to 
\[\Map_{\cat{X}}(B,UF)\to\Map_{\cat{X}}(A,UF)\times_{\Map_{\cat{X}}(A,UE)}\Map_{\cat{X}}(B,UE)\]
which is a fibration by our assumption that $\cat{X}$ is a simplicial model category. The case where the map $C\to D$ is a trivial cofibration or the map $E\to F$ is a trivial fibration is treated analogously.
\end{proof}

\subsection{Bousfield localization.}

\begin{defi}
Let $\cat{X}$ be a simplicial model category and $S$ a set of arrows in $\cat{X}$. We say that an object $Z$ of $\cat{X}$ is $S$-local if for all $u:A\to B$ in $S$, the induced map
\[\R\Map_{\cat{X}}(B,Z)\to \R\Map_{\cat{X}}(A,Z)\]
is a weak equivalence.
\end{defi}

For future reference, we recall the following theorem:

\begin{theo}\label{theo-existence Bousfield localization}
Let $\cat{X}$ be a combinatorial left proper simplicial model category and let $S$ be a set of arrows in $\cat{X}$. There is a model structure on $\cat{X}$ denoted $L_S\cat{X}$ satisfying the following properties.
\begin{itemize}
\item The cofibrations of $L_S\cat{X}$ are the cofibrations of $\cat{X}$.
\item The fibrant objects of $L_S\cat{X}$ are the fibrant objects of $\cat{X}$ that are also $S$-local.
\item The weak equivalences of $L_S\cat{X}$ are the maps $f:X\to Y$ such that the induced map
\[\R\Map_X(Y,K)\to\R\Map(X,K)\]
is a weak equivalence in $\Sp$ for every $S$-local object $K$.
\end{itemize}
Moreover, $L_S\cat{X}$ is left proper, combinatorial, and if $\cat{X}$ admits a set of generating cofibrations with cofibrant source, then $L_S\cat{X}$ is simplicial.
\end{theo}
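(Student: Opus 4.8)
The plan is to realize $L_S\cat{X}$ by applying Jeff Smith's recognition theorem for combinatorial model categories, keeping the cofibrations of $\cat{X}$ and replacing the weak equivalences by the class $W_S$ of \emph{$S$-local equivalences}: those $f\colon X\to Y$ for which $\R\Map_{\cat{X}}(Y,K)\to\R\Map_{\cat{X}}(X,K)$ is a weak equivalence in $\Sp$ for every $S$-local fibrant object $K$. Taking $I$ to be a set of generating cofibrations of $\cat{X}$, I would have to verify the hypotheses of Smith's theorem: that $W_S$ satisfies two-out-of-three and is closed under retracts, that every $I$-injective map lies in $W_S$, that $W_S$ is an accessible subcategory of $\cat{X}^{[1]}$, and that $W_S\cap\mathrm{cof}(I)$ is closed under pushout and transfinite composition. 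The second of these is immediate, since an $I$-injective map is a trivial fibration of $\cat{X}$, hence a weak equivalence, hence certainly an $S$-local equivalence.

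The technical heart is to produce a good set of generating trivial cofibrations together with a localization functor. After replacing $S$ by a set of cofibrations between cofibrant objects (via the factorizations and cofibrant replacement of $\cat{X}$), and writing $\otimes$ for the tensoring of $\cat{X}$ over $\Sp$, I would form the horn set
\[\Lambda(S)=\Bigl\{\,\bigl(A\otimes\Delta[n]\bigr)\cup_{A\otimes\partial\Delta[n]}\bigl(B\otimes\partial\Delta[n]\bigr)\longrightarrow B\otimes\Delta[n]\ :\ (A\to B)\in S,\ n\geq 0\,\Bigr\}\]
and set $J_S=J\cup\Lambda(S)$, where $J$ generates the trivial cofibrations of $\cat{X}$. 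Using the pushout-product axiom in $\cat{X}$ (so that $\Map_{\cat{X}}(B,K)\to\Map_{\cat{X}}(A,K)$ is automatically a fibration for fibrant $K$), a fibrant object of $\cat{X}$ has the right lifting property against $J_S$ precisely when it is $S$-local. Moreover each element of $\Lambda(S)$ is a cofibration inducing a weak equivalence of mapping spaces into every $S$-local fibrant object, hence lies in $W_S$. Running the small object argument on $J_S$ then yields a functorial fibrant replacement $\eta_X\colon X\to LX$ in which $LX$ is $S$-local and fibrant in $\cat{X}$ and $\eta_X$ is a $J_S$-cell complex.

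The main obstacle is the closure of $W_S\cap\mathrm{cof}(I)$ under pushout and transfinite composition, and this is exactly where the left properness hypothesis on $\cat{X}$ enters. Given a pushout of a local trivial cofibration along an arbitrary map, I would apply $\R\Map_{\cat{X}}(-,K)$ to the square for an arbitrary $S$-local fibrant $K$; left properness guarantees that the original square is a homotopy pushout (one pushed map being a cofibration), so its image is a homotopy pullback of spaces, and since one leg is a weak equivalence so is the opposite leg, whence the pushout again lies in $W_S$. Transfinite composition is handled by a tower-of-fibrations argument, the mapping space out of the colimit being the homotopy limit of an inverse tower of trivial fibrations. Once this closure is in place, every $J_S$-cell complex — in particular each $\eta_X$ — is a local trivial cofibration, which gives the clean characterization that $f\in W_S$ if and only if $Lf$ is a weak equivalence of $\cat{X}$. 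From this, two-out-of-three and retract-closure for $W_S$ follow formally, and accessibility of $W_S$ follows from the accessibility of the weak equivalences of the combinatorial model category $\cat{X}$ together with the accessibility of the functor $L$, since $W_S$ is the preimage of the former under the latter.

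Finally I would read off the remaining assertions. Smith's theorem delivers a combinatorial model structure with generating cofibrations $I$ and generating trivial cofibrations $J_S$; its fibrant objects are the $J_S$-injective objects, which by the computation above are exactly the $\cat{X}$-fibrant $S$-local objects. Left properness of $L_S\cat{X}$ is inherited from $\cat{X}$ because the two structures share the same cofibrations and the characterization of $W_S$ shows that local equivalences are stable under pushout along cofibrations. For the simplicial statement, assuming the generating cofibrations have cofibrant source, I would verify the enriched pushout-product axiom by reducing it to the generators: the cofibration half is unchanged from $\cat{X}$, while the local-triviality half follows because $\Lambda(S)$ is itself built from pushout-products with the boundary inclusions, and the cofibrant-source hypothesis ensures that the remaining pushout-products of generating cofibrations with the maps $\partial\Delta[n]\to\Delta[n]$ are again local equivalences.
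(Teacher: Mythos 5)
The paper itself offers no proof of this theorem; it cites Barwick. Your Smith-recognition strategy is precisely the route taken in that reference, and most of the steps you outline are sound: replacing $S$ by cofibrations between cofibrant objects, forming the horn set $\Lambda(S)$, the left-properness argument for closure of $W_S\cap\mathrm{cof}(I)$ under pushout, the tower argument for transfinite composition, and accessibility of $W_S$ as the preimage of the weak equivalences of $\cat{X}$ under the accessible functor $L$.

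There is, however, a genuine gap in your final paragraph. Smith's theorem does not deliver $J_S=J\cup\Lambda(S)$ as a set of generating trivial cofibrations of $L_S\cat{X}$; it manufactures its own set by a solution-set argument, and it is a well-known subtlety of Bousfield localization that $J_S$ does \emph{not} generate in general: a $J_S$-injective \emph{map} need not be a fibration of $L_S\cat{X}$ (the horns only detect fibrations with fibrant local target). So the identification ``fibrant objects $=$ $J_S$-injective objects'' cannot be read off from Smith's theorem; the nontrivial inclusion requiring proof is that an $\cat{X}$-fibrant $S$-local object $Z$ has the right lifting property against an \emph{arbitrary} cofibration $i\colon A\to B$ in $W_S$. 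This can be repaired with tools you already set up: push out $i$ along any map $A\to Z$ to get a cofibration $Z\to Z\cup_A B$ in $W_S$ (your left-properness closure), compose with the $J_S$-cell complex $Z\cup_AB\to L(Z\cup_AB)$, and note that the composite is a cofibration in $W_S$ between fibrant $S$-local objects, hence---by a Whitehead-type lemma (a local equivalence between fibrant local objects is a weak equivalence of $\cat{X}$), which you also use implicitly in the claim $f\in W_S\iff Lf\in W_{\cat{X}}$ but never establish---a trivial cofibration of $\cat{X}$ with fibrant source; it therefore admits a retraction, and $B\to Z\cup_AB\to L(Z\cup_AB)\to Z$ is the desired lift. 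A similar miscalibration occurs in your SM7 sketch: pushout-products of generating \emph{cofibrations} of $\cat{X}$ with $\partial\Delta[n]\to\Delta[n]$ are certainly not local equivalences, and there is no explicit generating set of $L_S$-trivial cofibrations to reduce to. What must be shown is that $i\mathbin{\square}j$ is a local equivalence whenever $i$ is a cofibration lying in $W_S$; the cofibrant-source hypothesis enters to reduce, by cellular induction, to cofibrations between cofibrant objects, where $\Map_{\cat{X}}(-,K)$ computes $\R\Map_{\cat{X}}(-,K)$ and an adjunction argument using that $K^{\Delta[n]}$ and $K^{\partial\Delta[n]}$ are again fibrant and $S$-local finishes the proof.
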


\begin{proof}
This is proved in~\cite[Theorem 4.7. and Theorem 4.46]{barwickleft}.
\end{proof}

For future reference, we have the following proposition which explains how Bousfield localization interacts with certain Quillen equivalences.

\begin{prop}\label{prop-Bousfield localization}
Let $F:\cat{X}\leftrightarrows\cat{Y}:G$ be a Quillen equivalence. Let $S$ be a set of maps in $\cat{X}$ and let $L_S\cat{X}$ (resp. $L_{\mathbb{L}FS}\cat{Y}$) be the left Bousfield localization of $\cat{X}$ (resp. $\cat{Y}$) with respect to $S$ (resp. $\mathbb{L}FS$). Then we have a Quillen equivalence
\[F:L_S\cat{X}\leftrightarrows L_{\mathbb{L}FS}\cat{Y}:G\]
Moreover, if the functor $G$ preserves and reflects weak equivalences before localization, it is still the case after localization.
\end{prop}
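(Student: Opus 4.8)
The plan is to upgrade the adjunction to a Quillen adjunction between the localized model structures, then verify the Quillen equivalence by an argument on homotopy categories, and finally treat the moreover statement. For the Quillen adjunction, I would use that a left Bousfield localization leaves the cofibrations unchanged (Theorem~\ref{theo-existence Bousfield localization}), so $F$ still preserves cofibrations. By the universal property of left Bousfield localization, a left Quillen functor out of $\cat{X}$ descends to $L_S\cat{X}$ as soon as its left derived functor inverts the maps of $S$. Applying this to the composite $\cat{X}\xrightarrow{F}\cat{Y}\to L_{\mathbb{L} F S}\cat{Y}$, whose left derived functor sends each $u\in S$ to $\mathbb{L} F u\in \mathbb{L} F S$, a map inverted in $L_{\mathbb{L} F S}\cat{Y}$ by construction, I conclude that $F\colon L_S\cat{X}\to L_{\mathbb{L} F S}\cat{Y}$ is left Quillen with right adjoint $G$.

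For the equivalence I would argue on homotopy categories. By Theorem~\ref{theo-existence Bousfield localization} the homotopy category of $L_S\cat{X}$ is the reflective full subcategory of $\on{Ho}(\cat{X})$ spanned by the $S$-local objects, and likewise $\on{Ho}(L_{\mathbb{L} F S}\cat{Y})$ is spanned by the $\mathbb{L} F S$-local objects. Since $(F,G)$ is already a Quillen equivalence, $\mathbb{L} F$ and $\mathbb{R} G$ are inverse equivalences $\on{Ho}(\cat{X})\simeq\on{Ho}(\cat{Y})$, and the crux is that these equivalences match the two local subcategories. Proposition~\ref{prop-derived adjunction} gives natural weak equivalences $\R\Map_{\cat{Y}}(\mathbb{L} F A,Z)\simeq\R\Map_{\cat{X}}(A,\mathbb{R} G Z)$; hence for $Z$ an $\mathbb{L} F S$-local object, the map $\R\Map_{\cat{X}}(B,\mathbb{R} G Z)\to\R\Map_{\cat{X}}(A,\mathbb{R} G Z)$ induced by a given $u\colon A\to B$ in $S$ is identified with the one induced by $\mathbb{L} F u$, which is a weak equivalence, so $\mathbb{R} G$ carries $\mathbb{L} F S$-local objects to $S$-local objects. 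Symmetrically, using that the derived unit $W\to \mathbb{R} G\mathbb{L} F W$ is an isomorphism in $\on{Ho}(\cat{X})$, I obtain $\R\Map_{\cat{Y}}(\mathbb{L} F A,\mathbb{L} F W)\simeq\R\Map_{\cat{X}}(A,W)$, which shows that $\mathbb{L} F$ carries $S$-local objects to $\mathbb{L} F S$-local objects. Therefore $\mathbb{L} F\dashv\mathbb{R} G$ restricts to inverse equivalences between the two reflective subcategories. Since cofibrant replacements are unchanged and a local object is its own fibrant replacement in the localized structure, these restrictions are precisely the derived functors of the localized adjunction, so the latter is a Quillen equivalence.

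For the moreover statement, suppose $G$ preserves and reflects weak equivalences before localization. Then $G\simeq\mathbb{R} G$ already induces an equivalence $[G]\colon\on{Ho}(\cat{Y})\to\on{Ho}(\cat{X})$, which by the two inclusions established above together with $\mathbb{R} G\mathbb{L} F\cong\id$ restricts to an equivalence of the local subcategories compatible with their inclusions, hence commuting with the reflections up to natural isomorphism, $r_S\circ[G]\cong[G]\circ r_T$. A map $f$ of $\cat{Y}$ is a weak equivalence in $L_{\mathbb{L} F S}\cat{Y}$ exactly when $r_T$ inverts $[f]$; applying $[G]$ and the relation $r_S[G]\cong[G]r_T$, and using $[Gf]=[G][f]$ (valid since $G$ preserves weak equivalences), this holds if and only if $r_S$ inverts $[Gf]$, i.e. if and only if $Gf$ is a weak equivalence in $L_S\cat{X}$. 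Hence $G$ preserves and reflects weak equivalences after localization as well.

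The main obstacle throughout is the bookkeeping that the derived equivalence sends $S$-local objects to $\mathbb{L} F S$-local objects and conversely; this is precisely where Proposition~\ref{prop-derived adjunction} is indispensable, since it is what converts the defining mapping-space condition for locality on one side into the corresponding condition on the other.
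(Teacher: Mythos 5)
Your proposal is correct in substance but takes a genuinely different route from the paper's proof. For the Quillen equivalence itself, the paper simply cites~\cite[Theorem 3.3.20]{hirschhornmodel}, whereas you re-derive it: you descend $F$ via the universal property of left Bousfield localization, then check that the derived adjunction is an equivalence by identifying $\on{Ho}(L_S\cat{X})$ and $\on{Ho}(L_{\L FS}\cat{Y})$ with the reflective subcategories of local objects and showing that $\L F$ and $\R G$ interchange them. For the ``moreover'' statement the paper never leaves the mapping-space level: it uses the characterization (from theorem~\ref{theo-existence Bousfield localization}) of weak equivalences in the localization as maps inducing equivalences on $\R\Map(-,Z)$ for every local $Z$, reduces via the counit equivalence $\L FGY\to Y$ and proposition~\ref{prop-derived adjunction} to maps of the form $G(f)^*$ into objects $GZ$, and then proves that the $S$-local objects are exactly those weakly equivalent to $GZ$ with $Z$ an $\L FS$-local object. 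Your argument instead runs at the homotopy-category level: $[G]$ commutes with the two reflections, and a map is a weak equivalence in the localization iff its reflection becomes invertible. The mathematical core is identical---both proofs hinge on proposition~\ref{prop-derived adjunction} together with the derived unit/counit isomorphisms to match the two classes of local objects---but your packaging buys conceptual transparency at the price of invoking standard localization facts that the paper never states: that $\on{Ho}(L_S\cat{X})$ is the reflective full subcategory of $S$-local objects (you attribute this to theorem~\ref{theo-existence Bousfield localization}, which does not assert it), the universal property used to descend $F$, and the saturation property that a map of $\cat{Y}$ is a weak equivalence iff it becomes an isomorphism in the homotopy category. All of these are true (they can be found in Hirschhorn's and Hovey's books), so this is a citation issue rather than a mathematical gap; still, as written your proof is not self-contained relative to the toolkit the paper has set up, whereas the paper's mapping-space argument stays strictly within it.
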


\begin{proof}
This proposition without the last claim is~\cite[Theorem 3.3.20]{hirschhornmodel}.

For $u:A\to B$ any map in $\cat{X}$ and $Z$ an object of $\cat{X}$, we denote by $u^*$ the map 
\[\R\Map_{\cat{X}}(B,Z)\to\R\Map_{\cat{X}}(A,Z)\]
obtained from the contravariant functoriality of $\R\Map_{\cat{X}}(-,-)$ in the first variable.

Let us assume that $G$ preserves and reflects weak equivalences. We first observe that $\R G$ coincides with $G$ up to weak equivalence. Let $f:U\to V$ be a map in $\cat{Y}$. The map $f$ is a weak equivalence in $L_{\mathbb{L}FS}\cat{Y}$ if and only if for any $\mathbb{L}FS$-local object $Z$ of $\cat{Y}$, the induced map
\[\R\Map_{\cat{Y}}(V,Z)\goto{f^*}\R\Map_{\cat{Y}}(U,Z)\]
is a weak equivalence.

Since $G$ is a Quillen weak equivalence, the counit map $\L FGY\to Y$ is a weak equivalence in $\cat{Y}$ for all $Y$. Therefore, $f$ is a weak equivalence in $L_{\mathbb{L}FS}\cat{Y}$ if and only if for any $\mathbb{L}FS$-local object $Z$ of $\cat{Y}$, the map
\[\R\Map_{\cat{Y}}(\L FGV,Z)\goto{\L FG(f)^*}\R\Map_{\cat{Y}}(\L FGU,Z)\]
is a weak equivalence. Using proposition~\ref{prop-derived adjunction}, this happens if and only if
\[\R\Map_{\cat{X}}(GV,GZ)\goto{G(f)^*}\R\Map_{\cat{X}}(GU,GZ)\]
is a weak equivalence for any $\mathbb{L}FS$-local object $Z$ of $\cat{Y}$.

Thus, in order to prove the proposition, it suffices to prove that the class of $S$-local objects is exactly the class of objects of $\cat{X}$ that are weakly equivalent to one of the form $GZ$ for $Z$ a $\mathbb{L}FS$-local object.

On the one hand, if $Z$ is $\mathbb{L}FS$-local, an application of proposition~\ref{prop-derived adjunction} immediately shows that $GZ$ is $S$-local. 

Let $s:A\to B$ be any map in $S$ and $Z$ be any object of $X$. Then according to proposition~\ref{prop-derived adjunction}, the map
\[\R\Map_{\cat{Y}}(\L FB, \L FZ)\goto{\L F(s)^*} \R\Map_{\cat{X}}(\L FA,\L FZ)\]
is a weak equivalence if and only if the map
\[\R\Map_{\cat{X}}(B,G\L FZ)\goto{s^*} \R\Map_{\cat{X}}(A,G\L FZ)\]
is one. Therefore, $\L F(Z)$ is $\L FS$-local if and only if $G\L FZ$ is $S$-local. But, since $(F,G)$ is a Quillen adjunction, the unit map $Z\to G\L FZ$ is a weak equivalence in $\cat{X}$. This means that the functor $\L F$ sends $S$-local objects to $\L FS$-local objects. In particular, any $S$-local object $X$ is weakly equivalent to $G\L F(X)$ which is of the form $GZ$ for $Z$ an $\L FS$-local object.
\end{proof}

\subsection{Homotopy cartesian squares.}

For future reference, we recall the definition of a homotopy cartesian squares.

Let $W$ be the small category freely generated by the directed graph $0\rightarrow 01 \leftarrow 1$ and let $\Sp^W$ be the functor category. We can give it the injective model structure in which a morphism is a weak equivalence or cofibration if it is levelwise a weak equivalence or cofibration. We denote by $R$ a fibrant replacement functor in $\Sp^W$. For $X=X_0\to X_{01}\leftarrow X_1$ an object of $\Sp^W$, we denote by $X_0\times^h_{X_{01}}X_1$ the pullback of $RX$. We call it the homotopy pullback. Note that there is a map $X_0\times_{X_{01}} X_1\to X_0\times^h_{X_{01}} X_1$
which depends functorially on $X$. The functor $\Sp^W\to \Sp$ sending a span to its homotopy pullback is weak equivalence preserving.

\begin{defi}
A commutative square
\[
\xymatrix{
X_\varnothing\ar[d]\ar[r]&X_0\ar[d]\\
X_1\ar[r]&X_{01}}
\]
is said to be homotopy cartesian if the composite $X_\varnothing\to X_0\times_{X_{01}}X_1\to X_0\times^h_{X_{01}}X_1$ is a weak equivalence.
\end{defi}

\begin{rem}
It is a standard fact about model categories that this definition is independent of the choice of $R$. In fact by right properness of $\Sp$, a commutative square
\[
\xymatrix{
X_\varnothing\ar[d]\ar[r]&X_0\ar[d]^p\\
X_1\ar[r]&X_{01}}
\]
is homotopy cartesian if and only if there exists a factorization of $p$ as a weak equivalence $X_0\to X'_0$ followed by a fibration $X'_0\to X_{01}$ such that the induced map $X_\varnothing\to X_1\times_{X_{01}}X'_0$ is a weak equivalence. 
\end{rem}

If $f:X\to Y$ is a map in $\Sp$ and $y:*\to Y$ is a point in $Y$, we denote by $\on{hofiber}_yf$ the homotopy pullback $X\times^h_Y *$.

We will need the following two classical facts about homotopy cartesian squares.

\begin{prop}\label{prop-characterization homotopy cartesian}
Let 
\[
\xymatrix{
K\ar[r]\ar[d]_p& L\ar[d]^q\\
M\ar[r]_f& N
}
\]
be a square in $\Sp$ in which each corner is fibrant. Then, it is homotopy cartesian if and only if for each point $m$ in $M$, the induced map $\on{hofiber}_mp\to\on{hofiber}_{f(m)}q$ is a weak equivalence.
\end{prop}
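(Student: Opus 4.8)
The plan is to prove the equivalence by comparing homotopy fibers of the two vertical maps over corresponding points, reducing the statement about squares to a statement about maps of fibrations. Since all four corners are fibrant, the homotopy pullback $L \times^h_N M$ can be computed by first replacing one leg by a fibration. Concretely, I would factor $q : L \to N$ as a weak equivalence $L \to L'$ followed by a fibration $r : L' \to N$, so that the strict pullback $M \times_N L'$ models $L \times^h_N M$ (using right properness of $\Sp$, as recorded in the remark). The square is then homotopy cartesian exactly when the comparison map $K \to M \times_N L'$ is a weak equivalence.

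The key step is then to identify the fiber of the map $M \times_N L' \to M$ over a point $m$. Since $r : L' \to N$ is a fibration, the projection $M \times_N L' \to M$ is also a fibration (pullbacks of fibrations are fibrations), and its strict fiber over $m$ is the strict fiber of $r$ over $f(m)$, which models $\on{hofiber}_{f(m)} q$ because $L \to L'$ is a weak equivalence and the fiber of a fibration over a point is a model for the homotopy fiber. Likewise, replacing $p : K \to M$ by a fibration and taking strict fibers over $m$ computes $\on{hofiber}_m p$. Thus the comparison map $K \to M \times_N L'$, read fiberwise over each $m \in M$, recovers precisely the induced map $\on{hofiber}_m p \to \on{hofiber}_{f(m)} q$.

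The remaining point is to turn the fiberwise comparison into a statement about the total map. First I would observe that $K \to M \times_N L'$ is a map of objects over $M$, and after replacing $p$ by a fibration we get a map of fibrations over $M$. For a map of fibrations over a base, being a weak equivalence is equivalent to inducing a weak equivalence on all fibers: this is a standard consequence of the long exact sequence of homotopy groups together with the fact that $\Sp$ is right proper, or equivalently of the fact that a fiberwise weak equivalence between fibrations over a fixed base is a levelwise weak equivalence in the appropriate slice model structure. Running this equivalence in both directions gives both implications of the proposition simultaneously.

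The main obstacle I expect is the bookkeeping needed to make the fibers genuinely comparable, that is, ensuring the point $m \in M$ and its image $f(m) \in N$ index matching fibers on both sides after all the replacements, and being careful that the various homotopy fibers are taken over the correct basepoints. Once the squares are arranged as maps of fibrations over $M$ (pulling the leg $q$ back along $f$ to live over $M$), the fiberwise criterion for weak equivalences of fibrations does the rest; the only real content is the standard lemma that a fiberwise weak equivalence of fibrations over a fixed base is a weak equivalence, which I would cite rather than reprove.
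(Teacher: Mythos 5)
Your proposal is correct, but it does not follow the paper's route, because the paper has no argument of its own here: its entire proof is a citation to \cite[Proposition 3.3.18]{munsoncubical}. What you have done is replace that black box with the standard reduction: factor $q$ as $L\to L'\to N$ with $L'\to N$ a fibration so that $M\times_N L'$ computes the homotopy pullback; factor $p$ as a trivial cofibration $K\to K'$ followed by a fibration $K'\to M$ and lift $K\to M\times_N L'$ along $K\to K'$ to obtain a map of fibrations over $M$; identify strict fibers of these replacements with the homotopy fibers in the statement; and invoke the fiberwise criterion that a map of Kan fibrations over a common base is a weak equivalence if and only if it induces weak equivalences on the fibers over all vertices. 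This is a legitimate proof modulo two points you should make explicit. First, essentially all the content now lives in that fiberwise criterion (its ``fiberwise implies global'' direction is the nontrivial half), so it must be cited from an external source such as Goerss--Jardine; in particular you may not cite the paper's own proposition~\ref{prop-weak equivalences over a fixed space}, which looks similar but is deduced later \emph{from} the present proposition, so that would be circular. Second, the bookkeeping you flag is real: one must check that the strict-fiber map of your replacements represents, up to weak equivalence, the canonical map $\on{hofiber}_mp\to\on{hofiber}_{f(m)}q$ induced by the square; this follows by assembling the replacements into a zig-zag of maps of spans and using the invariance of homotopy pullbacks under weak equivalences of diagrams (proposition~\ref{prop-invariance of homotopy cartesian squares}), after which two-out-of-three gives both implications exactly as you describe. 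The trade-off is clear: the paper's citation is shorter, while your argument exposes that the proposition is equivalent to the classical fiberwise criterion and, as such, works verbatim in any right proper model category where that criterion is available.
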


\begin{proof}
This is proved in~\cite[Proposition 3.3.18]{munsoncubical}.
\end{proof}

\begin{prop}\label{prop-invariance of homotopy cartesian squares}
Let 
\[\xymatrix{
X_\varnothing\ar[d]\ar[r]&X_0\ar[d]& &Y_\varnothing\ar[r]\ar[d]&Y_0\ar[d]\\
X_1\ar[r]&X_{01}& &Y_1\ar[r]&Y_{01}
}
\]
be two commutative squares in $\Sp$ and let $f$ be a weak equivalence between them in the category of squares of simplicial sets. Then, one of them is homotopy cartesian if and only if the other is homotopy cartesian.
\end{prop}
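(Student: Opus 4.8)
The plan is to reduce the statement to an application of the two-out-of-three property for weak equivalences, applied to the naturality square of the comparison map $X_\varnothing\to X_0\times^h_{X_{01}}X_1$. First I would observe that a weak equivalence $f$ in the category of squares is by definition a levelwise weak equivalence, so in particular its three components $X_0\to Y_0$, $X_1\to Y_1$ and $X_{01}\to Y_{01}$ are weak equivalences. These assemble, by construction of the injective model structure on $\Sp^W$, into a weak equivalence between the underlying spans $(X_0\to X_{01}\leftarrow X_1)$ and $(Y_0\to Y_{01}\leftarrow Y_1)$ regarded as objects of $\Sp^W$.

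Next, since the homotopy pullback functor $\Sp^W\to\Sp$ recalled above is weak equivalence preserving, applying it to this weak equivalence of spans produces a weak equivalence
\[
X_0\times^h_{X_{01}}X_1\longrightarrow Y_0\times^h_{Y_{01}}Y_1.
\]
The key point is that the comparison map $X_\varnothing\to X_0\times^h_{X_{01}}X_1$, obtained as the composite of the canonical map $X_\varnothing\to X_0\times_{X_{01}}X_1$ into the strict pullback with the functorial map $X_0\times_{X_{01}}X_1\to X_0\times^h_{X_{01}}X_1$, is natural in the entire square: the first map is natural by the universal property of the pullback, and the second is natural by the functoriality recalled just before the definition of homotopy cartesian. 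I therefore obtain a commutative square
\[
\xymatrix{
X_\varnothing\ar[r]\ar[d]_f & X_0\times^h_{X_{01}}X_1\ar[d]\\
Y_\varnothing\ar[r] & Y_0\times^h_{Y_{01}}Y_1
}
\]
in which the left vertical map is the component of $f$ at the initial corner, hence a weak equivalence, and the right vertical map is the weak equivalence produced in the previous step.

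Finally, by the two-out-of-three property the top horizontal arrow is a weak equivalence if and only if the bottom one is. By definition these arrows are exactly the comparison maps whose being a weak equivalence witnesses that the respective square is homotopy cartesian, so the two squares are homotopy cartesian simultaneously, as claimed. The only step that genuinely requires care is the naturality of the comparison map as a functor of the whole square, since it is precisely this naturality that produces the commutative square to which two-out-of-three applies; everything else in the argument is formal.
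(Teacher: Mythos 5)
Your proof is correct and is essentially the paper's own argument: both form the naturality diagram relating the comparison maps $X_\varnothing\to X_0\times^h_{X_{01}}X_1$ and $Y_\varnothing\to Y_0\times^h_{Y_{01}}Y_1$, use that the homotopy pullback functor on $\Sp^W$ preserves weak equivalences together with the hypothesis that the components of $f$ are weak equivalences, and conclude by two-out-of-three. The only cosmetic difference is that the paper displays the comparison map as the explicit composite through the strict pullback in a $2\times 3$ diagram, whereas you collapse it into a single arrow.
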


\begin{proof}
We have a commutative diagram
\[\xymatrix{
X_\varnothing\ar[r]\ar[d]& X_0\times_{X_{01}}X_1\ar[r]\ar[d]& X_0\times^h_{X_{01}}X_1\ar[d]\\
Y_\varnothing\ar[r]& Y_0\times_{Y_{01}}Y_1\ar[r]& Y_0\times^h_{Y_{01}}Y_1
}
\]
in which the vertical maps are induced by $f$. The leftmost and rightmost vertical maps are weak equivalences. Thus, the composite of the two top horizontal maps is a weak equivalence if and only if the composite of the bottom two horizontal maps is a weak equivalence.
\end{proof}

\section{Six model structures on simplicial spaces}

\subsection{The projective model structure.}\label{ss projective}

The category $s\Sp$ can be given the projective model structure. In this model structure, the weak equivalences and fibrations are the maps which are weak equivalences and fibrations in each degree. We denote by $s\Sp_{proj}$ this model category.

A set of generating cofibrations (resp. trivial cofibrations) is given by the maps
\[F(n)\times K\to F(n)\times L\]
where $n$ can be any nonnegative integer and $K\to L$ is any element of a set of generating cofibrations (resp. trivial cofibrations) of $\Sp$.

This model structure is simplicial. For $X$ and $Y$ two objects of $s\Sp$, the space of maps between them is given by:
\[\Map_{s\Sp}(X,Y)_k=s\Sp(X\times\Delta[k],Y)\]
where $\Delta[k]$ denotes the constant simplicial space which is $\Delta[k]$ in each degree.

The model category $s\Sp$ is also proper and combinatorial. Its weak equivalences are stable under filtered colimits.

Let us denote by $s\Sp_{inj}$ the category of simplicial spaces equipped with the injective model structure. This is the model structure in which the cofibrations (resp. weak equivalences) are the maps which are levelwise cofibrations (resp. weak equivalences). The identity map $s\Sp_{proj}\to s\Sp_{inj}$ is a left Quillen equivalence. According to~\cite[Theorem 15.8.7.]{hirschhornmodel}, the injective model structure coincides with the Reedy model structure.

\subsection{The Segal model structure.}\label{ss Segal}

To a simplicial space $X$, we can assign the $n$-fold fiber product $X_1\times_{X_0}\ldots\times_{X_0}X_1$. This defines a simplicial functor from $s\Sp$ to $\Sp$ which is representable by a simplicial space $G(n)$ (see~\cite[section 4.1.]{rezkmodel} for an explicit construction of $G(n)$). There is a map $G(n)\to F(n)$ representing the Segal map
\[X_n\to X_1\times_{X_0}\ldots\times_{X_0}X_1\]

\begin{defi}
The category $\cat{SS}_{proj}$ is the left Bousfield localization of $s\Sp_{proj}$ with respect to the maps $G(n)\to F(n)$ for any $n\geq 1$.
\end{defi}

The existence of this model structure follows from theorem~\ref{theo-existence Bousfield localization} since $s\Sp_{proj}$ is left proper and combinatorial. Moreover, this model structure is simplicial, left proper, combinatorial.

If we denote by $\cat{SS}_{inj}$ the same localization on $s\Sp_{inj}$, we get, by proposition~\ref{prop-Bousfield localization}, a Quillen equivalence
\[\id:\cat{SS}_{proj}\leftrightarrows \cat{SS}_{inj}:\id\]
in which both sides have the same weak equivalences by proposition~\ref{prop-Bousfield localization}.

\subsection{The Rezk model structure.}\label{ss Rezk}

Let $I[1]$ be the category with two objects and one isomorphism between them. Let $E$ be its nerve seen as a levelwise discrete simplicial space.

\begin{defi}
The model category $\cat{CSS}_{proj}$ is the left Bousfield localization of $\cat{SS}_{proj}$ with respect to the unique map $E\to F(0)$.
\end{defi}

This Bousfield localization exists since $\cat{SS}_{proj}$ is left proper and combinatorial. Moreover, this model structure is simplicial, left proper and combinatorial. 

If we denote by $\cat{CSS}_{inj}$ the same localization on $\cat{SS}_{inj}$, we get, by proposition~\ref{prop-Bousfield localization}, a Quillen equivalence
\[\id:\cat{CSS}_{proj}\leftrightarrows \cat{CSS}_{inj}:\id\]
in which both sides have the same weak equivalences by proposition~\ref{prop-Bousfield localization}.

\subsection{The fibrant objects of $\cat{SS}_{proj}$.}

In this subsection, we give an explicit description of the fibrant objects in $\cat{SS}_{proj}$. Recall that the fibrant objects of $\cat{SS}_{inj}$ are called the Segal spaces. By theorem~\ref{theo-existence Bousfield localization}, they are the injectively fibrant simplicial spaces $X$ such that the Segal maps
\[X_n\to X_1\times_{X_0}\times\ldots\times_{X_0}X_1\]
are weak equivalences.

\begin{prop}\label{prop-Segal fibrant}
Let $X$ be a fibrant object of $s\Sp_{proj}$. The following conditions are equivalent.
\begin{enumerate}
\item $X$ is fibrant in $\cat{SS}_{proj}$.
\item $X$ is local with respect to the maps $G(n)\to F(n)$ for all $n$.
\item For each $m$ and $n$, the following commutative diagram is homotopy cartesian.
\[\xymatrix{
X_{m+n}\ar[r]^{l_{m,n}^*}\ar[d]_{r_{n,m}^*}&X_m\ar[d]^{r_m^*}\\
X_n\ar[r]_{l_n^*}& X_0
}
\]
In this diagram, the map $l_{m,n}:[m]\to[m+n]$ sends $i$ to $i$, the map $r_{n,m}:[n]\to[m+n]$ sends $j$ to $j+m$, the map $l_n$ sends the unique object of $[0]$ to $0$ and the map $r_m$ sends the unique object of $[0]$ to $m$.

\item For each $m$, the commutative square
\[\xymatrix{
X_{m+1}\ar[r]\ar[d]&X_m\ar[d]\\
X_1\ar[r]& X_0
}
\]
which is a particular case of the previous one with $n=1$ is homotopy cartesian.
\item For any levelwise weak equivalence $X\to Y$ with $Y$ fibrant in $s\Sp_{inj}$, $Y$ is a Segal space.
\item There exists a levelwise weak equivalence $X\to Y$ with $Y$ a Segal space.
\end{enumerate}
\end{prop}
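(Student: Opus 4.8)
The plan is to prove the six conditions equivalent by establishing a chain of implications, using the interplay between the Segal-map localization, the homotopy-cartesian conditions, and the passage to an injectively-fibrant model. The key technical tool is the characterization of the fibrant objects of $\cat{SS}_{proj}$ via $S$-locality (theorem~\ref{theo-existence Bousfield localization}), the fact that the maps $G(n)\to F(n)$ represent the Segal maps, and the propositions on homotopy cartesian squares from the previous section.

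First I would note that (1) $\Leftrightarrow$ (2) is immediate from theorem~\ref{theo-existence Bousfield localization}: since $X$ is assumed fibrant in $s\Sp_{proj}$, it is fibrant in the localization $\cat{SS}_{proj}$ precisely when it is local with respect to the localizing set $\{G(n)\to F(n)\}$. Next I would unwind what $(2)$ means: since $G(n)$ corepresents the functor $Y\mapsto Y_1\times_{Y_0}\cdots\times_{Y_0}Y_1$ and $F(n)$ corepresents $Y\mapsto Y_n$, and since $X$ is projectively fibrant (so $\R\Map(F(n),X)\simeq\Map(F(n),X)=X_n$ and similarly $\R\Map(G(n),X)$ computes the iterated fiber product homotopically), locality against $G(n)\to F(n)$ says exactly that the Segal map $X_n\to X_1\times_{X_0}\cdots\times_{X_0}X_1$ is a weak equivalence. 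I would then argue $(2)\Leftrightarrow(3)$ by observing that the homotopy cartesian condition in (3), taken together over all $m,n$, is an inductive reformulation of the Segal condition: the square in (3) exhibits $X_{m+n}$ as the homotopy pullback of $X_m$ and $X_n$ over $X_0$, and iterating this decomposition (splitting $[m+n]$ repeatedly) recovers the full iterated homotopy fiber product, hence the Segal maps being equivalences. Since every corner of the square is fibrant (as $X$ is projectively fibrant, each $X_k$ is a fibrant simplicial set), I can apply proposition~\ref{prop-characterization homotopy cartesian} to check homotopy cartesianness fiberwise whenever convenient.

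The implications $(3)\Rightarrow(4)$ is trivial specialization to $n=1$. For $(4)\Rightarrow(3)$ I would induct on $n$: the case $n=1$ is the hypothesis, and the general square for $(m,n+1)$ is obtained by pasting the square for $(m+n,1)$ with the square for $(m,n)$, using the standard pasting lemma for homotopy cartesian squares (which follows from proposition~\ref{prop-characterization homotopy cartesian} by comparing homotopy fibers). This pasting argument is where I expect the only real bookkeeping to live, as one must track the simplicial operators $l,r$ carefully to see that the two smaller squares paste to the desired one. Finally, for the equivalences with $(5)$ and $(6)$, I would use the left Quillen equivalence $\id:s\Sp_{proj}\to s\Sp_{inj}$: given a levelwise (injective) weak equivalence $X\to Y$ with $Y$ injectively fibrant, proposition~\ref{prop-invariance of homotopy cartesian squares} shows that the square in (3) for $X$ is homotopy cartesian if and only if the corresponding square for $Y$ is, and for an injectively fibrant $Y$ the homotopy-cartesian condition is equivalent to $Y$ being a Segal space (again by the inductive reformulation of the Segal maps). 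Thus $(3)$ for $X$ transfers to the Segal condition on any such $Y$, giving $(3)\Leftrightarrow(5)$, while $(5)\Rightarrow(6)$ is formal (a levelwise-fibrant replacement exists) and $(6)\Rightarrow(3)$ again by invariance of homotopy cartesian squares under the given weak equivalence.

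The main obstacle I anticipate is the careful verification of the pasting step $(4)\Rightarrow(3)$ and, relatedly, the precise matching between the iterated homotopy fiber product appearing in the Segal condition and the iterated pasting of the squares in (3); one must ensure that the homotopy pullbacks compose correctly and that the relevant corner objects are fibrant so that strict and homotopy pullbacks can be compared. Everything else reduces to unwinding definitions and citing the already-established propositions on homotopy cartesian squares and on Bousfield localization.
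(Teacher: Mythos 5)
Your identification of condition (2) with the \emph{strict} Segal condition is the step that fails, and it is not a cosmetic issue. You argue that, since $X$ is projectively fibrant, $\R\Map(G(n),X)$ "computes the iterated fiber product", so that locality against $G(n)\to F(n)$ says exactly that $X_n\to X_1\times_{X_0}\cdots\times_{X_0}X_1$ is a weak equivalence. Dropping the derived functor in $\R\Map(G(n),X)$ requires $G(n)$ to be projectively \emph{cofibrant}, and it is not: by the paper's own proposition~\ref{prop-characterization of cofibrants}, a projectively cofibrant simplicial space satisfies $X\cong NSX$, whereas $SG(n)\cong [n]$, so the unit $G(n)\to NSG(n)\cong F(n)$ is the spine inclusion, which is not an isomorphism for $n\geq 2$. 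What locality actually asserts for a merely projectively fibrant $X$ is that $X_n$ maps by a weak equivalence to the \emph{homotopy} iterated fiber product, and the two conditions genuinely differ. For example, take the internal category $C$ with $\Ob(C)=W$ a contractible Kan complex with two distinct vertices $a,b$, and $\Ar(C)=W\sqcup\{f\}$ with $s(f)=a$, $t(f)=b$: its nerve is projectively fibrant and its strict Segal maps are isomorphisms (as for any nerve), but $N_2C$ has three components while $N_1C\times^h_{N_0C}N_1C$ has four (the pair $(f,f)$ contributes a nonempty path space $\{b\}\times^h_W\{a\}$), so $NC$ is not local. This phenomenon is exactly why the proposition states (3) and (4) as homotopy cartesian squares, and why the paper's proofs of $(2)\Rightarrow(3)$ and $(4)\Rightarrow(2)$ begin by invoking invariance under levelwise weak equivalences (proposition~\ref{prop-invariance of homotopy cartesian squares}, together with homotopy invariance of locality) to replace $X$ by an injectively fibrant object: only then is $G(n)$ cofibrant and only then do strict fiber products along the face maps $X_m\to X_0$ (now fibrations) compute homotopy fiber products.

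The rest of your outline is salvageable once this identification is repaired. Your route $(4)\Rightarrow(3)$ by pasting homotopy cartesian squares is legitimate and differs from the paper, which instead proves $(4)\Rightarrow(2)$ directly by an induction with the representing objects, factoring $G(m+1)\to F(m+1)$ through an intermediate pushout $F(m,1)$; and your treatment of (5) and (6) by invariance is close in spirit to the paper's, which settles $(6)\Leftrightarrow(1)$ by citing the mixed model structure result of Cole. But as written, the proof collapses at the unwinding of condition (2): every subsequent equivalence leans on an incorrect description of what locality means for a projectively (as opposed to injectively) fibrant simplicial space.
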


\begin{proof}
$(1)\Longleftrightarrow(2)$ follows from the characterization of the fibrant objects of left Bousfield localization given in theorem ~\ref{theo-existence Bousfield localization}.

$(2)\implies (3)$ Note that conditions $(2)$ and $(3)$ are invariant under levelwise weak equivalences of simplicial spaces by proposition~\ref{prop-invariance of homotopy cartesian squares}. Thus we can assume that $X$ is fibrant in $s\Sp_{inj}$. 

The map $r_m:X_m=\Map(F(m),X)\to X_0=\Map(F(0),X)$ is represented by a map  $r^m:F(0)\to F(m)$. Similarly $l_n:X_n\to X_0$ is represented by $l^n:F(0)\to F(n)$. It is easy to verify that these maps factor through $G(m)$ and $G(n)$ so that we have a commutative diagram
\[
\xymatrix{
G(m)\ar[d]&G(0)=F(0)\ar[l]\ar[r]\ar[d]_{id}&G(n)\ar[d]\\
F(m)      &F(0)\ar[l]^{r^m}\ar[r]_{l^n}           &F(n)
}
\]
The pushout of the top row is $G(m+n)$ and we denote by $F(m,n)$ the pushout of the bottom row. Since all the vertical maps are weak equivalences in $\cat{SS}_{inj}$, and the horizontal maps are cofibrations in $\cat{SS}_{inj}$, the map $G(m+n)\to F(m,n)$ is a weak equivalence in $\cat{SS}_{inj}$. Note that there is an obvious map $F(m,n)\to F(m+n)$. The composite of that map with the previous map $G(m+n)\to F(m,n)$ is the map $G(m+n)\to F(m+n)$ which is the map representing the Segal map and is thus by construction a weak equivalence in $\cat{SS}_{inj}$. Therefore, by the two-out-of-three property, the map $F(m,n)\to F(m+n)$ is a weak equivalence in $s\Sp_{inj}$. Applying $\Map(-,X)$ to this map, we find that the map $X_{m+n}\to X_m\times_{X_0}X_n$ is a weak equivalence. Since $X$ is injectively fibrant, the maps $X_m\to X_0$ and $X_n\to X_0$ are fibrations which implies that the square
\[\xymatrix{
X_{m+n}\ar[r]\ar[d]&X_m\ar[d]^{r_m^*}\\
X_n\ar[r]_{l_n^*}& X_0
}
\]
is homotopy cartesian.

$(3)\implies (4)$ is immediate.

$(4)\implies (2)$ Again, we can assume that $X$ is fibrant in $s\Sp_{inj}$. We want to prove that it is local with respect to the maps $G(n)\to F(n)$ for each $n$. The case $n=0$ and $n=1$ are trivial. We proceed by induction. We assume that $X$ is local with respect to $G(k)\to F(k)$ for $k\leq m$ and that $X$ satisfies condition $(4)$. According to the proof of $(2)\implies (3)$, condition $(4)$ implies locality of $X$ with respect to the map $F(m,1)\to F(m+1)$. The map $G(m+1)\to F(m+1)$ factors through $F(m,1)\to F(m+1)$. Thus it suffices to check that $X$ is local with respect to $G(m+1)\to F(m,1)$. As in the proof of $(2)\implies (3)$, we have a commutative diagram in $s\Sp$
\[\xymatrix{
G(m)\ar[d]&G(0)=F(0)\ar[l]\ar[r]\ar[d]_{id}&G(1)\ar[d]\\
F(m)      &F(0)\ar[l]^{r^m}\ar[r]_{l^1}           &F(1)
}
\]
such that if we take the pushout of each row, we find the map $G(m+1)\to F(m,1)$. We can apply $\Map(-,X)$ to this diagram and get a diagram
\[\xymatrix{
\Map(G(m),X)\ar[r]&X_0&X_1\ar[l]\\
X_m\ar[u]\ar[r]&X_0\ar[u]&X_1\ar[u]\ar[l]
}
\]
Since $X$ is injectively fibrant, each of the horizontal map is a fibration and by the induction hypothesis, the vertical maps are weak equivalences. Therefore, the induced map on pullbacks is a weak equivalence which is precisely saying that $X$ is local with respect to $G(m+1)\to F(m,1)$.

$(5)\implies (6)$ If $X$ satisfies $(5)$, then we can take $X\to Y$ to be a fibrant replacement in $s\Sp_{inj}$ and $Y$ is a Segal space.

$(6)\implies (5)$ Let $X\to Y$ be a levelwise weak equivalence with $Y$ a Segal space which exists because $X$ satisfies $(6)$. Let $X\to Z$ be a levelwise weak equivalence with $Z$ fibrant in $s\Sp_{inj}$. Then we have a zig-zag of levelwise weak equivalences between $Y$ and $Z$. Since $Y$ is local with respect to $G(n)\to F(n)$ for all $n$, so is $Z$. In particular, by~\ref{theo-existence Bousfield localization}, $Z$ is fibrant in $\cat{SS}_{inj}$ i.e. is a Segal space.

$(6)\Longleftrightarrow (1)$ This follows from~\cite[Proposition 3.6]{colemixing}. Indeed, $\cat{SS}_{proj}$ is the mixed model structure obtained by taking the cofibrations of $s\Sp_{proj}$ and the weak equivalences of $\cat{SS}_{inj}$.
\end{proof}

From now on, a fibrant object of $\cat{SS}_{proj}$ will be called a Segal fibrant simplicial space.

\subsection{The fibrant objects of $\cat{CSS}_{proj}$.}

Now, we give an explicit description of the fibrant objects of $\cat{CSS}_{proj}$. First we construct the space of homotopy equivalences of a Segal fibrant simplicial space

The map of categories $[1]\to I[1]$ induces a map $F(1)\to E$ in $s\Sp$ after taking the nerve. Let $F(1)\to J\to E$ be a factorization of this map as a cofibration followed by a trivial fibration in $s\Sp_{proj}$. Note that since $F(1)$ is cofibrant in $s\Sp_{proj}$, then $J$ is cofibrant as well.

For $K$ a Kan complex (i.e. a fibrant object of $\Sp$), we denote by $\pi_0(K)$ the set of $0$-simplices of $K$ quotiented by the equivalence relation which identifies two $0$-simplices $x$ and $y$ if there is a $1$-simplex $z$ such that $d_0(z)=x$ and $d_1(z)=y$. It is well-known that this coincides with the set of path components of the geometric realization of $K$. Thus, we will call the elements of $\pi_0(K)$ the path components of $K$. Any Kan complex splits as a disjoint union
\[K=\bigsqcup_{x\in\pi_0(K)} K_x\]
where $K_x$ is the simplicial set whose $n$-simplices are the $n$-simplices of $K$ whose vertices are all in $x$. Note that all the spaces $K_x$ are Kan complexes which implies immediately that the obvious map $K\to \pi_0(K)$ is a Kan fibration.

\begin{defi}\label{defi-X_{hoequiv}}
For $X$ a Segal fibrant simplicial space, the space $X_{hoequiv}$ is defined by the following pullback
\[
\xymatrix{
X_{hoequiv}\ar[d]\ar[r]& X_1=\Map(F(1),X)\ar[d]\\
\pi_0\Map(J,X)\ar[r]& \pi_0\Map(F(1),X)
}
\]
where the bottom map is induced by the map $F(1)\to J$.
\end{defi}

Clearly $X\mapsto X_{hoequiv}$ defines a functor from Segal fibrant simplicial spaces to spaces. Using our previous observation that the map $X_1\to \pi_0(X_1)$ is a fibration and the right properness of $\Sp$, we immediately see that $X\mapsto X_{hoequiv}$ sends levelwise weak equivalences to weak equivalences. Now we prove that this definition extends Rezk's definition of the space of homotopy equivalences.

\begin{prop}
The restriction of the functor $X\mapsto X_{hoequiv}$ to Segal spaces is naturally isomorphic to Rezk's space of homotopy equivalences defined in~\cite[section 5.7.]{rezkmodel}.
\end{prop}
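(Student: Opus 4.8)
The plan is to identify both Rezk's space and the one from the definition above as literally the \emph{same} union of path components of $X_1$, so that the comparison is an honest natural isomorphism rather than a mere weak equivalence. First I recall Rezk's construction from~\cite[section 5.7.]{rezkmodel}: for a Segal space $X$ the homotopy category $\on{Ho}(X)$ has the points of $X_0$ as objects and $\pi_0\map_X(x,y)$ as morphism sets, where $\map_X(x,y)$ is the fiber of $(d_1,d_0)\colon X_1\to X_0\times X_0$ over $(x,y)$; a point $g$ of $X_1$ is a \emph{homotopy equivalence} when its image in $\on{Ho}(X)$ is invertible, a condition that depends only on the path component of $g$ in $X_1$. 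Rezk's space of homotopy equivalences is then the sub-simplicial-set of $X_1$ consisting of those path components all of whose points are homotopy equivalences.

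Next I reduce the pullback defining $X_{hoequiv}$ to a subspace of $X_1$. Both $\pi_0\Map(J,X)$ and $\pi_0\Map(F(1),X)=\pi_0(X_1)$ are discrete, and the right vertical map $X_1\to\pi_0(X_1)$ is the canonical projection onto path components (a fibration, since $X_1$ is a Kan complex). Writing $\phi\colon \pi_0\Map(J,X)\to\pi_0(X_1)$ for the bottom map induced by $F(1)\to J$, a pullback along a map of discrete sets shows that the projection $X_{hoequiv}\to X_1$ is a monomorphism precisely when $\phi$ is injective, in which case its image is the union of those path components of $X_1$ lying in the image of $\phi$. Thus it suffices to prove that $\phi$ is injective with image exactly the set of homotopy-equivalence components.

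To compute $\phi$ I replace $J$ by $E$. Since $J\to E$ is a weak equivalence between objects that are cofibrant in $s\Sp_{inj}$ (indeed $J$ is projectively cofibrant, hence injectively cofibrant, and $E$ is levelwise discrete, hence injectively cofibrant) and $X$ is injectively fibrant, the induced map $\Map(E,X)\to\Map(J,X)$ is a weak equivalence, compatibly with the projections down to $X_1=\Map(F(1),X)$ coming from the factorization $F(1)\to J\to E$. Hence, after the bijection $\pi_0\Map(E,X)\cong\pi_0\Map(J,X)$, the map $\phi$ is identified with the map on $\pi_0$ induced by Rezk's comparison map $\Map(E,X)\to X_1$. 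Rezk proves in~\cite{rezkmodel} that for a Segal space this map factors through $X_{hoequiv}$ and is a weak equivalence onto it; consequently $\pi_0\Map(E,X)\to\pi_0(X_1)$ is the composite of a bijection $\pi_0\Map(E,X)\cong\pi_0 X_{hoequiv}$ with the inclusion $\pi_0 X_{hoequiv}\hookrightarrow\pi_0(X_1)$ of the homotopy-equivalence components. This inclusion is injective with image exactly those components, which furnishes both the injectivity of $\phi$ and the identification of its image. The projection $X_{hoequiv}\to X_1$ is therefore an isomorphism onto the subspace of homotopy-equivalence components, i.e.\ onto Rezk's space, and since every map used (the pullback, the factorization $F(1)\to J\to E$, and Rezk's comparison) is natural in $X$ over the category of Segal spaces, the isomorphism is natural.

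The only nonformal input is Rezk's theorem that $\Map(E,X)\to X_1$ is a weak equivalence onto the subspace of homotopy equivalences; everything else is bookkeeping with a pullback along a map of discrete sets. The step requiring the most care is checking that the factorization $F(1)\to J\to E$ makes $\phi$ match Rezk's comparison map \emph{on the nose} at the level of $\pi_0$, as this is what upgrades the weak equivalence to an actual isomorphism of simplicial sets (a union of path components of $X_1$) and thereby delivers the claimed natural isomorphism rather than merely a natural weak equivalence.
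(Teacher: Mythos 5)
Your proof is correct and takes essentially the same route as the paper's: both arguments identify $X_{hoequiv}$ and Rezk's space as the union of path components of $X_1$ determined by the same subset of $\pi_0(X_1)$, using the weak equivalence $\Map(E,X)\to\Map(J,X)$ (from the levelwise equivalence $J\to E$ and injective fibrancy of $X$) together with Rezk's Theorem 6.2 that $\Map(E,X)\to X_1$ is a weak equivalence onto the homotopy-equivalence components. The only cosmetic difference is that you package the comparison via injectivity and image of the map $\pi_0\Map(J,X)\to\pi_0(X_1)$, whereas the paper writes both spaces directly as pullbacks of the form $\pi_0(-)\times_{\pi_0(X_1)}X_1$; the mathematical content is identical.
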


\begin{proof}
Let $X$ be a Segal space. In this proof, we use the notation $X_{hoequiv}^R$ for the space of homotopy equivalences of $X$ defined by Rezk in~\cite[Section 5.7.]{rezkmodel}. Since $X_{hoequiv}^R$ is a set of components of $X_1$, we have
\[X_{hoequiv}^R=\pi_0(X_{hoequiv}^R)\times_{\pi_0(X_1)}X_1\] 

On the other hand, let us consider the following commutative diagram
\[
\xymatrix{
&X_{hoequiv}\ar[d]\ar[r]& X_1\ar[d]\\
\pi_0\Map(E,X)\ar[r]&\pi_0\Map(J,X)\ar[r]& \pi_0\Map(F(1),X)
}
\]
in which the square is the cartesian square defining $X_{hoequiv}$ (see definition~\ref{defi-X_{hoequiv}}). Since $X$ is a Segal space, the map $\pi_0\Map(E,X)\to\pi_0\Map(J,X)$ is an isomorphism, which implies that $X_{hoequiv}$ can be also defined as 
\[X_{hoequiv}=\pi_0\Map(E,X)\times_{\pi_0(X_1)} X_1\]

According to~\cite[Theorem 6.2.]{rezkmodel}, the map $\Map(E,X)\to X_1$ factors through $X_{hoequiv}^R$ and induces a weak equivalence $\Map(E,X)\to X_{hoequiv}^R$. In particular, it induces an isomorphism on $\pi_0$ which concludes the proof.
\end{proof}

The unique map $F(1)\to F(0)$ can be factored as $F(1)\to J \to F(0)$. If $X$ is a Segal fibrant simplicial space, we can apply $\Map(-,X)$, we find that the degeneracy $X_0\to X_1$ factors as 
\[X_0\to\Map(J,X)\to X_1\]
In particular, looking at the pullback square of definition~\ref{defi-X_{hoequiv}} we see that the degeneracy $X_0\to X_1$ factors through $X_{hoequiv}$. 

It is proved in~\cite[Theorem 7.2.]{rezkmodel} that the fibrant objects of $\cat{CSS}_{inj}$ are the Segal spaces such that the map $X_0\to X_{hoequiv}$ is a weak equivalence. These simplicial spaces are called complete Segal spaces.

Now, we give a characterization of the fibrant objects of $\cat{CSS}_{proj}$

\begin{prop}\label{prop-Rezk fibrant}
Let $X$ be a Segal fibrant simplicial space. The following conditions are equivalent.
\begin{enumerate}
\item $X$ is fibrant in $\cat{CSS}_{proj}$.
\item $X$ is local with respect to the unique map $E\to F(0)$.
\item The map $X_0\to X_{hoequiv}$ is a weak equivalence.
\item For any levelwise weak equivalence $X\to Y$ with $Y$ fibrant in $s\Sp_{inj}$, the simplicial space $Y$ is a complete Segal space.
\item There exists a levelwise weak equivalence $X\to Y$ such that $Y$ is a complete Segal space.
\end{enumerate}
\end{prop}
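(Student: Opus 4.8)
The plan is to follow the template of the proof of proposition~\ref{prop-Segal fibrant}: establish $(1)\Leftrightarrow(2)$ formally, and then reduce each of the remaining conditions to the injectively fibrant case, where Rezk's results and the proposition immediately preceding this statement are available. First, $(1)\Leftrightarrow(2)$ is immediate from theorem~\ref{theo-existence Bousfield localization}. Since $\cat{CSS}_{proj}$ is by definition the left Bousfield localization of $\cat{SS}_{proj}$ at the single map $E\to F(0)$, its fibrant objects are exactly the $\cat{SS}_{proj}$-fibrant objects that are local with respect to $E\to F(0)$; as $X$ is assumed Segal fibrant, being fibrant in $\cat{CSS}_{proj}$ is precisely condition $(2)$.

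The core step is $(2)\Leftrightarrow(3)$, which I would prove by passing to a Segal space. Both conditions are invariant under levelwise weak equivalences between Segal fibrant objects: condition $(2)$ because the derived mapping space $\R\Map_{\cat{SS}_{proj}}(-,-)$ preserves weak equivalences, and condition $(3)$ because both $X\mapsto X_0$ and $X\mapsto X_{hoequiv}$ send levelwise weak equivalences to weak equivalences, the latter being recorded right after definition~\ref{defi-X_{hoequiv}}. Using proposition~\ref{prop-Segal fibrant}, choose a levelwise weak equivalence $X\to Y$ with $Y$ a Segal space; then $(2)$ (resp.\ $(3)$) holds for $X$ if and only if it holds for $Y$. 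For the Segal space $Y$, locality with respect to $E\to F(0)$ is, by theorem~\ref{theo-existence Bousfield localization} applied to $\cat{CSS}_{inj}$, the same as being fibrant in $\cat{CSS}_{inj}$, i.e.\ a complete Segal space; and by~\cite[Theorem 7.2]{rezkmodel} together with the preceding proposition identifying $Y_{hoequiv}$ with Rezk's space $Y_{hoequiv}^R$, this is equivalent to $Y_0\to Y_{hoequiv}$ being a weak equivalence, which is condition $(3)$ for $Y$. This yields $(2)\Leftrightarrow(3)$.

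It remains to bring in $(4)$ and $(5)$, for which I would argue as at the end of proposition~\ref{prop-Segal fibrant}. The implication $(4)\Rightarrow(5)$ follows by taking an injective fibrant replacement $X\to Y$, which is then a complete Segal space by $(4)$. For $(5)\Rightarrow(4)$, given any levelwise weak equivalence $X\to Z$ with $Z$ injectively fibrant and the equivalence $X\to Y$ of $(5)$ with $Y$ a complete Segal space, the objects $Y$ and $Z$ are joined by a zig-zag of levelwise weak equivalences through $X$; since $Z$ is a Segal space by proposition~\ref{prop-Segal fibrant} and condition $(3)$ is levelwise invariant, $Z_0\to Z_{hoequiv}$ is a weak equivalence because $Y_0\to Y_{hoequiv}$ is, so $Z$ is a complete Segal space. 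Finally, $(3)\Leftrightarrow(5)$ follows from the same reduction: choosing $X\to Y$ a levelwise weak equivalence to a Segal space, condition $(3)$ for $X$ is equivalent to $(3)$ for $Y$, hence to $Y$ being complete, which is an instance of $(5)$; conversely, any complete Segal space witnessing $(5)$ is injectively fibrant and levelwise weakly equivalent to $X$, so $(3)$ transfers back to $X$ by invariance.

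I expect the only genuinely delicate point to be the bookkeeping around the two localizations: one must be sure that ``local with respect to $E\to F(0)$'' denotes the same condition in $\cat{SS}_{proj}$ and in $\cat{SS}_{inj}$, which holds because the two model structures share the same underlying simplicial enrichment and their derived mapping spaces therefore agree, and that the functor $X\mapsto X_{hoequiv}$ constructed here coincides with Rezk's $X_{hoequiv}^R$ on Segal spaces---but this is exactly the content of the proposition immediately preceding the present statement. Once these identifications are granted, every equivalence is obtained by transporting the corresponding injective statement along a levelwise weak equivalence.
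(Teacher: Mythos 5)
Your proof is correct, and it takes a noticeably different route from the paper's at the one non-formal juncture. The paper handles $(1)\Leftrightarrow(2)$ and $(4)\Leftrightarrow(5)$ formally, exactly as you do, but it connects $(1)$ to the remaining conditions by citing Cole's mixing theorem: $\cat{CSS}_{proj}$ is the mixed model structure built from the weak equivalences of $\cat{CSS}_{inj}$ and the cofibrations of $s\Sp_{proj}$, which gives $(1)\Leftrightarrow(5)$ in one stroke; it then deduces $(5)\Rightarrow(3)$ and $(3)\Rightarrow(4)$ from the levelwise invariance of condition $(3)$ together with Rezk's Theorem 7.2. You instead make $(2)\Leftrightarrow(3)$ the load-bearing step: transport both conditions along a levelwise weak equivalence to a Segal space, identify locality there with fibrancy in $\cat{CSS}_{inj}$ via the Bousfield localization theorem, and invoke Rezk's Theorem 7.2 plus the identification $Y_{hoequiv}\cong Y_{hoequiv}^R$. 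This replaces the external citation to the mixing theorem with an argument internal to the paper's toolkit, at the cost of having to justify that locality with respect to $E\to F(0)$ means the same thing in the projective and injective structures; you correctly flag this as the delicate point, and your justification (derived mapping spaces depend only on the weak equivalences, which the two structures share) is sound --- indeed the paper's own proof of proposition~\ref{prop-Segal fibrant} silently makes the same identification when it ``assumes $X$ is fibrant in $s\Sp_{inj}$'' before testing locality. Your closing of the cycle via $(3)\Leftrightarrow(5)$ and the zig-zag argument for $(5)\Rightarrow(4)$ is also a legitimate variant of the paper's locality-based zig-zag in the Segal case. In short: the paper's proof is shorter modulo the reference to Cole, yours is more self-contained and makes explicit an identification the paper leaves implicit.
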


\begin{proof}
The equivalence of $(1)$ and $(2)$ and of $(4)$ and $(5)$ is formal and similar to the analogous result in the case of Segal spaces (see the proof of~\ref{prop-Segal fibrant}). The equivalence of $(5)$ and $(1)$ follows from~\cite[Proposition 3.6]{colemixing} since $\cat{CSS}_{proj}$ is the mixed model structure with the weak equivalences of $\cat{CSS}_{inj}$ and the cofibrations of $s\Sp_{proj}$.

$(5)\implies (3)$. Note that for a Segal fibrant simplicial space, satisfying $(3)$ is preserved under levelwise weak equivalences. Hence, if $X$ satisfies $(5)$, by~\cite[Theorem 7.2.]{rezkmodel}, $Y$ satisfies $(3)$ which implies that $X$ satisfies $(3)$.

$(3)\implies (4)$ Let $X\to Y$ be a levelwise weak equivalence with $Y$ fibrant in $s\Sp_{inj}$. By proposition~\ref{prop-Segal fibrant}, $Y$ is a Segal space. We have observed in the previous paragraph that satisfying $(3)$ is preserved under weak equivalences. Thus $Y$ satisfies $(3)$ which is precisely saying that $Y$ is a complete Segal space.  
\end{proof}

From now on, a fibrant object of $\cat{CSS}_{proj}$ will be called a Rezk fibrant simplicial space.

\subsection{The Dwyer-Kan equivalences.}

For $X$ a Segal fibrant simplicial space, the maps $d_0$ and $d_1$ from $X_1$ to $X_0$ induce maps $X_{hoequiv}\to X_0$.

\begin{defi}
For $X$ a Segal fibrant simplicial space, we define the set $\pi_0(X_0)/\sim$ to be the following coequalizer
\[(\pi_0(d_0),\pi_0(d_1)):\pi_0(X_{hoequiv})\rightrightarrows \pi_0(X_0)\] 
\end{defi}

\begin{defi}
We say that a map $f:X\to Y$ between Segal fibrant simplicial spaces is
\begin{itemize}
\item fully faithful if the square
\[
\xymatrix{
X_1\ar[d]_{(d_0,d_1)}\ar[r]^{f_1}& Y_1\ar[d]^{(d_0,d_1)}\\
X_0\times X_0\ar[r]_{f_0\times f_0}& Y_0\times Y_0
}
\]

is homotopy cartesian.

\item essentially surjective if the induced map $\pi_0(X_0)/\sim\to\pi_0(Y_0)/\sim$ is surjective.

\item a Dwyer-Kan equivalence if it is both fully faithful and essentially surjective.
\end{itemize}
\end{defi}

If $X$ is a Segal space and $x$ and $y$ are two $0$-simplices of $X_0$, we denote by $\map_X(x,y)$, the fiber of $X_1$ over $(x,y)$ along the map $(d_0,d_1):X_1\to X_0\times X_0$. Since $X$ is injectively fibrant, the map $(d_0,d_1)$ is a fibration which implies that $\map_X(x,y)$ is a Kan complex. It is proved in~\cite[Section 5]{rezkmodel} that these mapping spaces can be composed up to homotopy so that there is a category $\on{Ho}(X)$ whose objects are the $0$-simplices of $X_0$ and with
\[\on{Ho}(X)(x,y)=\pi_0\map_X(x,y)\]
Moreover, this category depends functorially on $X$.

Rezk in~\cite[Section 7.4.]{rezkmodel} defines a notion of Dwyer-Kan equivalence between Segal spaces. A map $f:X\to Y$ between Segal spaces is a Dwyer-Kan equivalence in Rezk's sense if 
\begin{itemize}
\item the induced map $\map_X(x,x')\to \map_Y(f(x),f(x'))$ is a weak equivalence for any pair of points $x$, $x'$ in $X_0$ .
\item the induced map $\on{Ho}(f):\on{Ho}(X)\to\on{Ho}(Y)$ is an equivalence of categories.
\end{itemize}

We want to prove that our definition of Dwyer-Kan equivalences coincides with Rezk's.

\begin{prop}
Let $f:X\to Y$ be a map between Segal spaces. Then 
\begin{enumerate}
\item the map $f$ is fully faithful if and only if, for any pair of points $(x,x')$ in $X_0$, the induced map
\[\map_X(x,x')\to\map_Y(f(x),f(x')))\]
is a weak equivalence.

\item the map $f$ is essentially surjective if and only if the induced map
\[\on{Ho}(f):\on{Ho}(X)\to\on{Ho}(Y)\]
is essentially surjective.

\item the map $f$ is a Dwyer-Kan equivalence if and only if it is a Dwyer-Kan equivalence in the sense of Rezk.
\end{enumerate}
\end{prop}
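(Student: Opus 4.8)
The plan is to prove the three statements in order, deducing the third from the first two together with the elementary characterization of equivalences of categories.

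For part (1), I would apply Proposition~\ref{prop-characterization homotopy cartesian} to the defining square. Since $X$ and $Y$ are Segal spaces, they are injectively fibrant, so each of $X_0$, $X_1$, $Y_0$, $Y_1$ is a Kan complex and hence so are the products $X_0\times X_0$ and $Y_0\times Y_0$; thus all four corners of the square are fibrant and Proposition~\ref{prop-characterization homotopy cartesian} applies. It tells us that the square is homotopy cartesian if and only if for every point $(x,x')$ of $X_0\times X_0$ the induced map $\on{hofiber}_{(x,x')}(d_0,d_1)\to\on{hofiber}_{(f(x),f(x'))}(d_0,d_1)$ is a weak equivalence. The key point is that, $X$ being injectively fibrant, the map $(d_0,d_1)\colon X_1\to X_0\times X_0$ is a fibration---exactly as used above to see that $\map_X(x,x')$ is a Kan complex---so its strict fiber over $(x,x')$, namely $\map_X(x,x')$, already computes the homotopy fiber, and likewise for $Y$. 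Hence the condition of Proposition~\ref{prop-characterization homotopy cartesian} reads precisely as the requirement that $\map_X(x,x')\to\map_Y(f(x),f(x'))$ be a weak equivalence for all $(x,x')$, which is the assertion of (1).

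For part (2), the main work is to identify $\pi_0(X_0)/\sim$ with the set $\mathrm{iso}(\on{Ho}(X))$ of isomorphism classes of objects of $\on{Ho}(X)$, naturally in $X$. I would construct a canonical map $\pi_0(X_0)\to\mathrm{iso}(\on{Ho}(X))$ sending a point of $X_0$ to the isomorphism class of the corresponding object of $\on{Ho}(X)$; this is well defined on $\pi_0$ because a path in $X_0$ produces, via the degeneracy, a path of identities in $X_1$ and hence an isomorphism in $\on{Ho}(X)$. The map is surjective since the objects of $\on{Ho}(X)$ are by definition the $0$-simplices of $X_0$, and it coequalizes $\pi_0(d_0)$ and $\pi_0(d_1)$ because a point of $X_{hoequiv}$ is by construction a homotopy equivalence, i.e. a $1$-simplex that becomes invertible in $\on{Ho}(X)$, so the two objects it connects have the same isomorphism class. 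Conversely, if two points of $X_0$ become isomorphic in $\on{Ho}(X)$, the isomorphism is represented by an invertible element of $X_1$, which by Rezk's characterization of $X_{hoequiv}$ lifts to a point of $X_{hoequiv}$ joining the two components; this shows the induced map $\pi_0(X_0)/\sim\,\to\mathrm{iso}(\on{Ho}(X))$ is injective as well, hence a bijection. Since this bijection is natural in $X$, the map $f$ is essentially surjective in our sense precisely when $\on{Ho}(f)$ induces a surjection on isomorphism classes, that is, precisely when $\on{Ho}(f)$ is essentially surjective.

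Finally, part (3) is a formal consequence of (1) and (2). By (1), $f$ is fully faithful if and only if each $\map_X(x,x')\to\map_Y(f(x),f(x'))$ is a weak equivalence, which is Rezk's first condition; moreover this condition forces $\on{Ho}(f)$ to be fully faithful, since on hom-sets $\on{Ho}(f)$ is the map $\pi_0\map_X(x,x')\to\pi_0\map_Y(f(x),f(x'))$ and a weak equivalence is in particular a bijection on $\pi_0$. By (2), essential surjectivity in our sense is equivalent to essential surjectivity of $\on{Ho}(f)$. Since a functor is an equivalence of categories exactly when it is fully faithful and essentially surjective, combining these observations shows that $f$ is a Dwyer-Kan equivalence in our sense if and only if both of Rezk's conditions hold, i.e. if and only if $f$ is a Dwyer-Kan equivalence in the sense of Rezk. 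The main obstacle in the whole argument is the identification in (2): one must correctly match points of $X_{hoequiv}$ with isomorphisms in $\on{Ho}(X)$, which is where Rezk's results on the space of homotopy equivalences are needed; parts (1) and (3) are then comparatively direct.
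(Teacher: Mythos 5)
Your overall strategy coincides with the paper's: part (1) via Proposition~\ref{prop-characterization homotopy cartesian} applied to the defining square (using that the vertical maps $(d_0,d_1)$ are fibrations, so strict fibers compute homotopy fibers), part (2) via a natural bijection between $\pi_0(X_0)/\sim$ and the set of isomorphism classes of $\on{Ho}(X)$, and part (3) deduced formally from (1) and (2). Parts (1) and (3) are correct as written and match the paper's argument.

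In part (2), however, the well-definedness step has a genuine gap. You assert that a path $u$ in $X_0$ from $x$ to $y$ yields, via the degeneracy, a path of identities in $X_1$ ``and hence an isomorphism in $\on{Ho}(X)$.'' The degenerate simplex $s_0(u)$ is indeed a path in $X_1$ from $s_0(x)$ to $s_0(y)$, but these two points lie in \emph{different} fibers of $(d_0,d_1):X_1\to X_0\times X_0$, namely $\map_X(x,x)$ and $\map_X(y,y)$. A path in the total space joining points of different fibers is not a morphism from $x$ to $y$: it produces no element of $\on{Ho}(X)(x,y)=\pi_0\map_X(x,y)$, invertible or otherwise, so the ``hence'' does not follow as stated. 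What is needed---and what the paper does at exactly this point---is a path-lifting argument: form the $1$-simplex of $X_0\times X_0$ from $(x,x)$ to $(x,y)$ given by the product of the degenerate $1$-simplex at $x$ with $u$, lift it against the Kan fibration $(d_0,d_1)$ starting at the point $s_0(x)$, and take the endpoint $h$ of the lift. Then $h$ is an honest point of $\map_X(x,y)$ lying in the same path component of $X_1$ as $s_0(x)$; since $s_0(x)\in X_{hoequiv}$ and $X_{hoequiv}$ is a union of path components of $X_1$, the point $h$ lies in $X_{hoequiv}$ and therefore represents an isomorphism $x\to y$ in $\on{Ho}(X)$. With this step supplied, the remainder of your part (2)---surjectivity, the coequalizing property, and injectivity via Rezk's characterization of $X_{hoequiv}$---goes through and agrees with the paper's proof.
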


\begin{proof}
(1) By definition, the map $f$ is fully faithful if and only if the square
\[
\xymatrix{
X_1\ar[d]_{(d_0,d_1)}\ar[r]^{f_1}& Y_1\ar[d]^{(d_0,d_1)}\\
X_0\times X_0\ar[r]_{f_0\times f_0}& Y_0\times Y_0
}
\]
is homotopy cartesian. Since $X$ and $Y$ are injectively fibrant, the vertical maps are fibrations. Thus, by proposition~\ref{prop-characterization homotopy cartesian}, $f$ is fully faithful if and only if the map
\[\map_X(x,x')\to\map_Y(f(x),f(x'))\]
is a weak equivalence for any point $(x,x')$ in $X_0\times X_0$.

(2) It suffices to check that for any Segal space $X$, the set $\pi_0(X)/\sim$ is isomorphic to $\on{Ho}(X)/\cong$, the set of isomorphism classes of objects of $\on{Ho}(X)$. There is a surjective map $(X_0)_0\to\on{Ho}(X)/\cong$. We claim that this map factors through $\pi_0(X_0)$. 

Indeed, as displayed in~\cite[6.3]{rezkmodel}, the diagonal map $X_0\to X_0\times X_0$ factors through $X_{hoequiv}$. Taking $\pi_0$, we find that the map $\pi_0(X_{hoequiv})\to\pi_0(X_0)\times\pi_0(X_0)$ hits the diagonal. Let $x$ and $y$ be two points of $X_0$ that lie in the same path component. Let $u$ be a $1$ simplex of $X_0\times X_0$ connecting $(x,x)$ and $(x,y)$ (one can for instance take the product of the degenerate $1$-simplex at $x$ with any choice of $1$-simplex connecting $x$ and $y$). Let us consider the commutative diagram
\[\xymatrix{
\Delta[0]\ar[r]^{s_0(x)}\ar[d]^{d^0}& X_1\ar[d]^{(d_0,d_1)}\\
\Delta[1]\ar[r]_u& X_0\times X_0
}
\]
where the top maps classifies the point $s_0(x)$ in $X_1$. Since the map $(d_0,d_1)$ is a fibration, there is a lift in this diagram which implies that there is a $1$-simplex of $X_1$ connecting $s_0(x)$ and some point $h$ of $X_1$ such that $(d_0(h),d_1(h))=(x,y)$. Since $s_0(x)$ lies in $X_{hoequiv}$, so does $h$. This implies that $x$ and $y$ are isomorphic in $\on{Ho}(X)$. Therefore, we have a surjective map $P:\pi_0(X_0)\to \on{Ho}(X)/\cong$.

Let $v$ be a path component of $X_{hoequiv}$. Let $f$ be any point in $v$ and $x=d_0(f)$ and $y=d_1(f)$. The path component of $f$ in $\map_X(x,y)$ is an isomorphism $x\to y$ in $\on{Ho}(X)$ by~\cite[\S 5.7.]{rezkmodel}. Therefore, $x$ and $y$ get identified in $\on{Ho}(X)/\cong$. Thus the map $P$ induces a surjective map $Q:\pi_0(X_0)/\sim\to\on{Ho}(X)/\cong$.

Let us show that $Q$ is injective. Let $x$ and $y$ be two points of $X_0$ and $v$ be an isomorphism between them in $\on{Ho}(X)$. Let $f$ be any point in $\map_X(x,y)$ in the path component of $v$. Then $f$ seen as a point in $X_{hoequiv}$ identifies the path components of $x$ and $y$ in $\pi_0(X_0)/\sim$.

(3) If $f$ satisfies the equivalent conditions of (1), then the map $\on{Ho}(f)$ is fully faithful. Thus if $f$ is fully faithful and essentially surjective, then $f$ is a Dwyer-Kan equivalence in Rezk's sense. Conversely, if $f$ is a Dwyer-Kan equivalence in Rezk's sense, then $f$ is fully faithful and essentially surjective by (1) and (2).
\end{proof}

\begin{prop}
Let 
\[
\xymatrix{
X\ar[d]_i\ar[r]^f&Y\ar[d]^j\\
U\ar[r]_{g}& V
}
\]
be a commutative diagram between Segal fibrant simplicial spaces in which the vertical maps are levelwise weak equivalences. Then the map $f$ is a Dwyer-Kan equivalence if and only if the map $g$ is a Dwyer-Kan equivalence.
\end{prop}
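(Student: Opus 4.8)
The plan is to treat the two constituents of the definition of a Dwyer-Kan equivalence separately and show that each is invariant under the levelwise weak equivalences $i$ and $j$. Concretely, I would prove that $f$ is fully faithful if and only if $g$ is, and that $f$ is essentially surjective if and only if $g$ is; combining these two equivalences gives the statement.

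For full faithfulness, the two squares whose homotopy cartesianness defines full faithfulness of $f$ and of $g$ assemble, using the commutativity of the given diagram together with the naturality of the maps $(d_0,d_1)$, into a morphism of squares in $\Sp$: the $f$-square maps to the $g$-square via $i_1\colon X_1\to U_1$, $j_1\colon Y_1\to V_1$, $i_0\times i_0\colon X_0\times X_0\to U_0\times U_0$ and $j_0\times j_0\colon Y_0\times Y_0\to V_0\times V_0$. Since $i$ and $j$ are levelwise weak equivalences, each of these four maps is a weak equivalence, so this is a weak equivalence between the two squares. By proposition~\ref{prop-invariance of homotopy cartesian squares}, the $f$-square is homotopy cartesian if and only if the $g$-square is, that is, $f$ is fully faithful if and only if $g$ is.

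For essential surjectivity, I would first observe that the assignment $X\mapsto \pi_0(X_0)/\!\sim$ is invariant under levelwise weak equivalences of Segal fibrant simplicial spaces. Indeed, a Segal fibrant simplicial space is levelwise Kan, so a levelwise weak equivalence $i\colon X\to U$ induces a bijection $\pi_0(X_0)\to\pi_0(U_0)$; moreover $X\mapsto X_{hoequiv}$ sends levelwise weak equivalences to weak equivalences (as noted after definition~\ref{defi-X_{hoequiv}}), so $i$ induces a bijection $\pi_0(X_{hoequiv})\to\pi_0(U_{hoequiv})$. These bijections are compatible with $\pi_0(d_0)$ and $\pi_0(d_1)$, hence induce a bijection on the coequalizers $\pi_0(X_0)/\!\sim\;\to\;\pi_0(U_0)/\!\sim$. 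Applying this to $i$ and to $j$ yields a commutative square of sets whose vertical maps are bijections and whose horizontal maps are the maps induced by $f$ and $g$; the top map is surjective if and only if the bottom one is, so $f$ is essentially surjective if and only if $g$ is.

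The only point requiring care is the bookkeeping showing that the various diagrams genuinely assemble into a morphism of squares (respectively of coequalizer diagrams) — this is where the commutativity of the given diagram and the naturality of $X_{hoequiv}$, $d_0$ and $d_1$ in $X$ are used. Once this is in place, both conclusions follow formally from proposition~\ref{prop-invariance of homotopy cartesian squares} and the invariance of $\pi_0(-_0)/\!\sim$, so no genuinely hard analysis is required; the content is entirely in these two invariance statements.
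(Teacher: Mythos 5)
Your proposal is correct and follows essentially the same route as the paper's proof: full faithfulness is handled by viewing the two defining squares as a morphism of squares which is a levelwise weak equivalence and invoking proposition~\ref{prop-invariance of homotopy cartesian squares}, and essential surjectivity by the invariance of $\pi_0(-_0)/\!\sim$ under levelwise weak equivalences. The only point the paper makes explicit that you leave implicit is that $i_0\times i_0$ and $j_0\times j_0$ are weak equivalences because weak equivalences in $\Sp$ are stable under finite products; otherwise your argument (which in fact justifies the $\pi_0(-_0)/\!\sim$ invariance in more detail than the paper does) matches it step for step.
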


\begin{proof}
We have an induced diagram
\[
\xymatrix{
\pi_0(X_0)/\sim\ar[d]\ar[r]& \pi_0(Y_0)/\sim\ar[d]\\
\pi_0(U_0)/\sim\ar[r]& \pi_0(V_0)/\sim
}
\]
The functor $\pi_0(-)/\sim$ sends levelwise weak equivalences to bijections, therefore, the two vertical maps are bijections. This informs us that $g$ is essentially surjective if and only if $f$ is essentially surjective.

We know that $j:Y\to V$ and $i:X\to U$ are levelwise weak equivalences. This implies that the square
\[
\xymatrix{
X_1\ar[d]\ar[r]& Y_1\ar[d]\\
X_0\times X_0\ar[r]& Y_0\times Y_0
}
\]
induced by $f$ maps to the square
\[
\xymatrix{
U_1\ar[d]\ar[r]& V_1\ar[d]\\
U_0\times U_0\ar[r]& V_0\times V_0
}
\]
induced by $g$ by a levelwise weak equivalence of squares. Note that this uses the classical fact that weak equivalences in $\Sp$ are stable under finite products. Thus the equivalence between the fully faithfulness of $f$ and $g$ follows from proposition~\ref{prop-invariance of homotopy cartesian squares}.
\end{proof}

We can now generalize~\cite[Theorem 7.7.]{rezkmodel} to Segal fibrant simplicial spaces.

\begin{prop}\label{prop-Dwyer Kan equivalences are Rezk equivalences}
Let $f:X\to Y$ be a map between Segal fibrant simplicial spaces. Then $f$ is a weak equivalence in $\cat{CSS}_{proj}$ if and only if it is a Dwyer-Kan equivalence.
\end{prop}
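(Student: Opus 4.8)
The plan is to reduce the statement for Segal fibrant simplicial spaces to the corresponding statement for complete Segal spaces, which is Rezk's theorem~\cite[Theorem 7.7.]{rezkmodel}, by passing to a levelwise fibrant replacement. The key observation that makes this reduction work is that both notions in play---being a weak equivalence in $\cat{CSS}_{proj}$, and being a Dwyer-Kan equivalence---are invariant under levelwise weak equivalences. The previous proposition establishes exactly this invariance for Dwyer-Kan equivalences, and the invariance for $\cat{CSS}_{proj}$-weak equivalences should follow from the fact that the weak equivalences of $\cat{CSS}_{proj}$ and $\cat{CSS}_{inj}$ coincide (as recorded in subsection~\ref{ss Rezk}), together with the fact that a levelwise weak equivalence is in particular a weak equivalence of $s\Sp_{inj}$ and hence of its localization $\cat{CSS}_{inj}$.

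Concretely, I would first apply a levelwise fibrant replacement in $s\Sp_{inj}$ to the map $f:X\to Y$, producing a commutative square
\[
\xymatrix{
X\ar[d]\ar[r]^f&Y\ar[d]\\
X'\ar[r]^{f'}& Y'
}
\]
in which the vertical maps are levelwise weak equivalences and $X'$, $Y'$ are fibrant in $s\Sp_{inj}$. Since $X$ and $Y$ are Segal fibrant, proposition~\ref{prop-Segal fibrant} guarantees (via condition $(5)$) that $X'$ and $Y'$ are Segal spaces, so $f'$ is a map between Segal spaces to which the previously established equivalence between the two notions of Dwyer-Kan equivalence applies. By the preceding proposition, $f$ is a Dwyer-Kan equivalence if and only if $f'$ is.

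Next I would handle the $\cat{CSS}_{proj}$ side. Because the vertical maps are levelwise weak equivalences, they are weak equivalences in $\cat{CSS}_{inj}$, hence in $\cat{CSS}_{proj}$; by the two-out-of-three property, $f$ is a weak equivalence in $\cat{CSS}_{proj}$ if and only if $f'$ is. We have thus reduced the proposition to the statement that $f'$ is a weak equivalence in $\cat{CSS}_{proj}$ if and only if it is a Dwyer-Kan equivalence of Segal spaces in Rezk's sense. Now, since $X'$ and $Y'$ are injectively fibrant Segal spaces, the weak equivalences in $\cat{CSS}_{inj}=\cat{CSS}_{proj}$ between them are described by Rezk's theory, and~\cite[Theorem 7.7.]{rezkmodel} identifies these precisely with the Dwyer-Kan equivalences of Segal spaces. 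Combined with the previously proved equivalence of our definition of Dwyer-Kan equivalence with Rezk's, this closes the argument.

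The main obstacle I anticipate is bookkeeping about exactly which fibrancy hypotheses are needed to invoke~\cite[Theorem 7.7.]{rezkmodel} and the prior comparison proposition: Rezk's theorem is about maps between Segal spaces, so one must be careful that the fibrant replacements $X'$, $Y'$ genuinely are Segal spaces (which proposition~\ref{prop-Segal fibrant} delivers) and that the comparison of weak equivalences is legitimate. A secondary subtlety is confirming that a levelwise weak equivalence is a $\cat{CSS}_{inj}$-weak equivalence; this is immediate because levelwise weak equivalences are the weak equivalences of $s\Sp_{inj}$ and weak equivalences are preserved under left Bousfield localization. Once these verifications are in place, the argument is a routine application of two-out-of-three and the two cited comparison results.
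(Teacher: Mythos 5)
Your proposal is correct and follows essentially the same route as the paper: replace $f$ by a levelwise fibrant replacement $f':X'\to Y'$ between Segal spaces (using proposition~\ref{prop-Segal fibrant}), transfer both properties across the replacement via two-out-of-three for Rezk equivalences and the invariance proposition for Dwyer-Kan equivalences, and then invoke~\cite[Theorem 7.7.]{rezkmodel} together with the comparison of the two notions of Dwyer-Kan equivalence. Your extra care in justifying that levelwise weak equivalences are $\cat{CSS}_{proj}$-weak equivalences only makes explicit what the paper leaves implicit.
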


\begin{proof}
First observe that we can functorially replace a Segal fibrant simplicial space by a levelwise equivalent Segal space. Indeed, if $R$ is a fibrant replacement functor in $s\Sp_{inj}$, then $RX$ is levelwise weakly equivalent to $X$. Thus according to proposition~\ref{prop-Segal fibrant}, if $X$ is Segal fibrant, $RX$ is a Segal space. 

Now let us prove the proposition. Let $f:X\to Y$ be a map between Segal fibrant simplicial spaces. By the previous observation, we can embed $f$ into a commutative square
\[
\xymatrix{
X\ar[d]\ar[r]^f&Y\ar[d]\\
X'\ar[r]_{f'}& Y'
}
\]
in which $X'$ and $Y'$ are Segal spaces and the vertical maps are levelwise weak equivalences. By the two-out-of-three property for the Rezk equivalences, the map $f$ is a Rezk equivalence if and only if $f'$ is one. By the previous proposition, $f$ is a Dwyer-Kan equivalence if and only if $f'$ is one. But for $f'$, the two notions coincide by~\cite[Theorem 7.7.]{rezkmodel}.
\end{proof}

\section{Internal categories}

\subsection{Generalities.}

Let $P$ be a space, the category of $P$-graphs denoted $\cat{Graph}_P$ is the overcategory $\Sp_{/P\times P}$. This category has a (nonsymmetric) monoidal structure given by sending $(s_A,t_A):A\to P\times P$ and $(s_B,t_B):B\to P\times P$ to the fiber product $A\times _P B$
taken along the map $t_A$ and $s_B$.

\begin{rem}
If $(s_A,t_A):A\to P\times P$ is a $P$-graph we will always use the following convention. A fiber product $-\times_PA$ is taken along $s_A$ and a fiber product $A\times_P-$ is taken along $t_A$.
\end{rem}

\begin{defi}
The category of $P$-internal categories is the category of monoids in $\cat{Graph}_P$. We denote it by $\ICat_P$.
\end{defi}

If $u:P\to Q$ is a map of simplicial sets, we get a functor $u^*:\cat{Graph}_Q\to\cat{Graph}_P$ sending $A$ to the fiber product $P\times_QA\times_QP$. This functor is lax monoidal, therefore, it induces a functor
\[u^*:\ICat_Q\to\ICat_P\]

\begin{defi}
The category $\cat{ICat}$ is the Grothendieck construction of the pseudo-functor from $\Sp\op$ to large categories sending $P$ to $\ICat_P$.
\end{defi}

More concretely, $\ICat$ is the category whose objects are pairs $(P,M)$ of a simplicial set $P$ called the space of objects and a $P$-internal category $M$ called the space of arrows. The morphisms $(P,M)\to (Q,N)$ are the pairs $(u,f^u)$ where $u:P\to Q$ is a map in $\Sp$ and $f^u:M\to u^* N$ is a map in $\cat{ICat}_{P}$. 

The fact that fiber products are computed degreewise in $\Sp$ implies that there is an equivalence of categories $\ICat\to\Cat^{\Delta\op}$.

With this last description, it is obvious that the category $\ICat$ is locally presentable.

We use the notation $\Ob(C)$ to denote the space of objects of an internal category $C$ and $\Ar(C)$ to denote the space of arrows.

The category $\Sp$ is a full subcategory of $\ICat$ through the functor sending $K$ to $(K,K)$ where both source and target are the identity map. The internal categories in the image of this functor are called discrete. Similarly, if $C$ is an ordinary category, we can see it as an internal category whose space of objects and morphisms are discrete (i.e. are constant simplicial sets). This defines a fully faithful embedding $\Cat\to\ICat$. More generally, the category $\Cat_{\Delta}$ of simplicially enriched categories is the full subcategory of $\ICat$ spanned by the internal categories whose space of objects is discrete. We will make no difference in notations between a space and its image in $\ICat$ and between a (simplicially enriched) category and its image in $\ICat$ under these two functors (this convention will be modified in the last section in which we will study in details the inclusion functor $\Cat_{\Delta}\to\ICat$).

\begin{prop}
The category $\ICat$ is cartesian closed.
\end{prop}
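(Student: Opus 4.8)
The plan is to exploit the equivalence $\ICat\cong\Cat^{\Delta\op}$ recorded above and to establish cartesian closedness through the adjoint functor theorem rather than by writing down the internal hom by hand. Since $\ICat$ is locally presentable it is in particular complete and cocomplete, so it has finite products. It therefore suffices to show that for each internal category $C$ the functor $-\times C\colon\ICat\to\ICat$ preserves all small colimits: once this is known, the adjoint functor theorem for locally presentable categories (a colimit-preserving functor between locally presentable categories is a left adjoint; see the discussion of presentable categories in~\cite{lurietopos}) supplies a right adjoint $[C,-]$, which is exactly the data exhibiting $\ICat$ as cartesian closed.

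To verify that $-\times C$ preserves colimits I would pass to the functor category $\Cat^{\Delta\op}$, where both limits and colimits are computed objectwise. Thus for a diagram $i\mapsto D^{(i)}$ in $\ICat$ and each $n\geq 0$ one has $(\operatorname*{colim}_i D^{(i)})_n\cong\operatorname*{colim}_i(D^{(i)}_n)$ in $\Cat$, while the product is given levelwise by $(D\times C)_n\cong D_n\times C_n$. The key input is the classical fact that $\Cat$ is itself cartesian closed, with internal hom the functor category $[\mathcal{C},\mathcal{D}]$; consequently, for each fixed small category $C_n$ the functor $-\times C_n\colon\Cat\to\Cat$ is a left adjoint and hence preserves colimits. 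Combining these observations levelwise yields, for every $n$,
\[\left((\operatorname*{colim}_i D^{(i)})\times C\right)_n\cong(\operatorname*{colim}_i D^{(i)}_n)\times C_n\cong\operatorname*{colim}_i(D^{(i)}_n\times C_n)\cong\left(\operatorname*{colim}_i(D^{(i)}\times C)\right)_n.\]
Since these isomorphisms are natural in $[n]\in\Delta\op$, they assemble into an isomorphism $(\operatorname*{colim}_i D^{(i)})\times C\cong\operatorname*{colim}_i(D^{(i)}\times C)$ in $\ICat$, so $-\times C$ is cocontinuous.

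I do not anticipate any serious obstacle: the conceptual content is entirely carried by the two facts, both already available, that $\ICat$ is locally presentable and that $\Cat$ is cartesian closed. The only points demanding care are the (standard) verification that products and colimits in $\Cat^{\Delta\op}$ are genuinely objectwise, and the bookkeeping confirming that the levelwise isomorphisms above are compatible with the face and degeneracy operators, so that they define a morphism of simplicial objects; both are routine. If one later wants an explicit description of $[C,D]$, it can be recovered from the adjunction, but that is not needed for the statement.
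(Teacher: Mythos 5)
Your proof is correct, but it takes a genuinely different route from the paper. The paper is constructive: it writes down the exponential explicitly, setting $\Ob(C^D)_k=\ICat(D\times\Delta[k],C)$ and $\Ar(C^D)_k=\ICat(D\times[1]\times\Delta[k],C)$, and leaves the verification of the internal category structure and of the exponential law $C^{D\times E}\cong(C^D)^E$ to the reader. You instead invoke the adjoint functor theorem for locally presentable categories: since $\ICat\cong\Cat^{\Delta\op}$ is locally presentable, and $-\times C$ is cocontinuous (checked objectwise, using that colimits and products in the functor category are computed levelwise and that $\Cat$ is cartesian closed), a right adjoint $[C,-]$ exists. Both arguments are sound, and each has its virtues. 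Your approach shifts all the verification burden onto standard cited machinery, so nothing is ``left to the reader''; the only points needing care, as you note, are the objectwise computation of (co)limits and the naturality of the comparison map, and the cleanest way to organize the latter is to observe that the canonical map $\operatorname{colim}_i(D^{(i)}\times C)\to(\operatorname{colim}_i D^{(i)})\times C$ is levelwise an isomorphism, hence an isomorphism. What your argument does not provide is the explicit formula for $C^D$, and this is not merely cosmetic in the context of this paper: the concrete description is what the author uses later to identify the mapping spaces ($\Map(C,D)=\Ob(D^C)$, so that the $n$-simplices of $\Map(C,D)$ are maps $C\times\Delta[n]\to D$) and to see that the nerve preserves cotensors, $N(C^K)\cong N(C)^K$, which feeds into the proof that $(S,N)$ is a simplicial adjunction. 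If you adopt the abstract existence proof, you would still need to unwind the adjunction to recover these identifications before the later sections go through.
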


\begin{proof}
If $C$ and $D$ are internal categories, we define an internal category $C^D$ with 
\[\Ob(C^D)_k=\ICat(D\times\Delta[k],C), \;\;\Ar(C^D)_k=\ICat(D\times[1]\times\Delta[k],C)\]
The internal category structure is left to the reader as well as the fact that there are natural isomorphisms $C^{D\times E}\cong (C^D)^E$.
\end{proof}

\subsection{The nerve functor.}

The main tool of this paper is the nerve functor $N:\ICat\to s\Sp$. It can be defined as the composite
\[N:\ICat\cong\Cat^{\Delta\op}\to \Sp^{\Delta\op}\to s\Sp\]
where the first map is the ordinary nerve functor applied degreewise and the functor $\Sp^{\Delta\op}\to s\Sp$ is the automorphism which swaps the two simplicial directions (we have chosen different notations to avoid confusion).

Concretely $N(C)$ is the simplicial space whose space of $n$-simplices is the $n$-fold fiber product
\[\Ar(C)\times_{\Ob(C)}\Ar(C)\times_{\Ob(C)}\times\ldots\times_{\Ob(C)}\Ar(C)\]

The nerve functor has a left adjoint $S:s\Sp\to \ICat$. The functor $S$ can be defined as the degreewise application of the left adjoint to the classical nerve functor $\Cat\to \Sp$ precomposed with the functor $s\Sp\to \Sp^{\Delta\op}$ that swaps the two simplicial directions. With this description, we see that the category of $k$-simplices of $S(X)$ is the quotient of the free category on the graph $(X_1)_k\rightrightarrows (X_0)_k$ where for any point $t$ in $(X_2)_k$, we impose the relation $d_2(t)\circ d_0(t)=d_1(t)$. Equivalently, the functor $S$ is the unique colimit preserving functor sending $F(p)\times\Delta[q]$ to $[p]\times\Delta[q]$.

Note that the functor $N$ is fully faithful. This implies that the  counit map $SN(C)\to C$ is an isomorphism for any internal category $C$.

\begin{prop}\label{prop-N preserves filtered colimits}
The functor $N:\ICat\to s\Sp$ preserves filtered colimits.
\end{prop}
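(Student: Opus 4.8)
The plan is to reduce the statement to the classical fact that the ordinary nerve $\Cat\to\Sp$ is finitary, using the degreewise description of $N$ together with the fact that colimits in the relevant functor categories are computed pointwise. Recall from the construction above that $N$ is the composite $\ICat\cong\Cat^{\Delta\op}\to\Sp^{\Delta\op}\to s\Sp$, in which the last arrow merely swaps the two simplicial directions and is therefore an isomorphism of categories preserving all colimits, and the first arrow is an equivalence. Hence it suffices to show that the degreewise nerve $\Cat^{\Delta\op}\to\Sp^{\Delta\op}$ preserves filtered colimits. Since colimits in both functor categories $\Cat^{\Delta\op}$ and $\Sp^{\Delta\op}$ are computed pointwise (in $\Cat$ and in $\Sp$ respectively), this in turn reduces to the claim that the classical nerve $\on{Ner}\colon\Cat\to\Sp$ preserves filtered colimits.

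To prove this last claim, I would note that for each fixed $n$ the set of $n$-simplices is $\on{Ner}(D)_n=\Cat([n],D)$. The poset $[n]$ is a finitely presentable object of $\Cat$ (it has finitely many objects and is generated under composition by finitely many morphisms subject to finitely many relations), so the corepresented functor $\Cat([n],-)\colon\Cat\to\cat{Set}$ preserves filtered colimits. Since a map of simplicial sets is an isomorphism exactly when it is one in each degree, and colimits of simplicial sets are computed degreewise in $\cat{Set}$, it follows that $\on{Ner}$ preserves filtered colimits.

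Equivalently, and matching the fiber-product description of $N$ recalled above, one can argue that the functors $\Ob,\Ar\colon\ICat\to\Sp$ preserve filtered colimits: bidegreewise they are $\Cat([0],-)$ and $\Cat([1],-)$ applied to the pointwise filtered colimit in $\Cat^{\Delta\op}$, and $[0]$ and $[1]$ are finitely presentable. Then $N(C)_n$ is the finite iterated fiber product $\Ar(C)\times_{\Ob(C)}\cdots\times_{\Ob(C)}\Ar(C)$, and since filtered colimits commute with finite limits in $\cat{Set}$, hence degreewise in $\Sp$, the functor $C\mapsto N(C)_n$ preserves filtered colimits for each $n$. Because colimits in $s\Sp$ are computed in each bidegree in $\cat{Set}$, this yields that $N$ preserves filtered colimits.

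The only genuinely nonformal point is the preservation of filtered colimits by $\Ob$ and $\Ar$ (equivalently, that each $[n]$ is finitely presentable in $\Cat$): this is where filteredness is essential, since it is exactly what guarantees that composition and identities in a filtered colimit of categories are already witnessed at a finite stage, so that the colimit computed on the underlying object- and morphism-sets again carries a category structure realizing the colimit in $\Cat^{\Delta\op}$. Once this is granted, the remainder is the standard interchange of filtered colimits with the finite limits defining the nerve.
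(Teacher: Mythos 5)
Your proof is correct and follows essentially the same route as the paper's: reduce $N$ to the degreewise application of the classical nerve (colimits in $\ICat$ and $s\Sp$ being computed degreewise), and then invoke the fact that each $[n]$ is a compact (finitely presentable) object of $\Cat$, so that $\Cat([n],-)$ preserves filtered colimits. The extra alternative via fiber products and the commutation of filtered colimits with finite limits is fine but not needed.
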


\begin{proof}
The ordinary nerve functor $\Cat\to \Sp$ preserves filtered colimits because each of the categories $[n]$ is a compact object of $\cat{Cat}$. The functor $N$ is the ordinary nerve applied in each degree. Since colimits in $\ICat$ and $s\Sp$ are computed degreewise, we are done.
\end{proof}

\subsection{Mapping spaces.}

Let $C$ and $D$ be internal categories. We use the notation $\Map(C,D)$ for the mapping space $\Map(NC,ND)$ in the category of simplicial spaces. This mapping space has as $n$ simplices the set of maps of bisimplicial sets $NC\times\Delta[n]\to ND$.

The simplicial space $\Delta[n]$ can be identified with the nerve of the discrete internal category $\Delta[n]$ (i.e. the internal category whose space of objects and space of morphism are both $\Delta[n]$). Therefore, the $n$ simplices of $\Map(C,D)$ are equivalently the maps of internal categories $C\times \Delta[n]\to D$.

Hence we see that $\Map(C,D)$ is the space $\Ob(D^C)$. It is also clear from this description that the functor $\Map(-,-)$ from $\ICat\op\times\ICat$ to $\Sp$ preserves limits in both variables.

\begin{rem}\label{rem-N is simplicially enriched}
By definition of the mapping space in $\ICat$, the nerve functor is a simplicially enriched functor. Moreover, the nerve also preserves cotensors by simplicial set. That is, if $C$ is an internal category and $K$ is a simplicial set, then there is a natural isomorphism $N(C^K)\cong N(C)^K$. Thus by~\cite[Theorem 4.85.]{kellybasic}, the adjunction $(S,N)$ is a simplicial adjunction.
\end{rem}

\section{A key lemma}

As usual, the main difficulty when one tries to transfer a model structure along a right adjoint is that the right adjoint does not preserve pushouts. The case of the nerve functor is no exception. However, in this section, we prove that certain very particular pushouts in $\ICat$ are preserved by the nerve functor.

The functor $\cat{Set}\to\Cat$ sending a set to the discrete category on that set has a left adjoint $\pi_0$. Concretely $\pi_0(C)$ is the quotient of the set $\Ob(C)$ by the smallest equivalence relation containing the pairs $(c,d)$ such that at least one of $C(c,d)$ or $C(d,c)$ is non empty. We say that a category $C$ is connected if $\pi_0(C)$ consists of a single element. Note that if $B$ is connected, then the set of functors $B\to C\sqcup D$ splits as $\Cat(B,C)\sqcup \Cat(B,D)$.

\begin{lemm}\label{lemm-key lemma}
Let $A$ be an object of $\cat{Cat}$ and $i:K\to L$ be a monomorphism in $\Sp$. Let
\[
\xymatrix{
K\times A\ar[r]\ar[d]_{i\times \id}&C\ar[d]^f\\
L\times A\ar[r] & D
}
\]
be a pushout diagram in $\ICat$. Then for each $B\in\cat{Cat}$ that is connected, the induced square
\[
\xymatrix{
\Map(B,K\times A)\ar[r]\ar[d]&\Map(B,C)\ar[d]\\
\Map(B,L\times A)\ar[r] & \Map(B,D)
}
\]
is a pushout diagram in $\Sp$.
\end{lemm}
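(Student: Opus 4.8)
The plan is to reduce the statement to a computation in $\Cat$, by first identifying the functor $\Map(B,-)\colon\ICat\to\Sp$ with the degreewise application of the representable functor $\Cat(B,-)$. Recall that $\Map(B,D)_n=\ICat(B\times\Delta[n],D)$, where $\Delta[n]$ is the discrete internal category on the simplicial set $\Delta[n]$. Working in $\ICat\cong\Cat^{\Delta\op}$, the object $B\times\Delta[n]$ has, in simplicial degree $k$, a coproduct of copies of $B$ indexed by the $k$-simplices of $\Delta[n]$ (since $B$ is constant and $\Delta[n]$ is degreewise discrete). I would check that a map $B\times\Delta[n]\to D$ is completely determined by its component at the unique nondegenerate top simplex $\id_{[n]}$: naturality forces the component at any $\varphi\colon[k]\to[n]$ to equal $D(\varphi)$ composed with the top component. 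This produces a natural bijection $\Map(B,D)_n\cong\Cat(B,D_n)$, and a short check that it is compatible with the simplicial operators shows that $\Map(B,-)$ is naturally isomorphic to $\Cat(B,-)$ applied in each degree.

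Granting this identification, since colimits in both $\ICat\cong\Cat^{\Delta\op}$ and $\Sp=\cat{Set}^{\Delta\op}$ are computed degreewise, it suffices to prove that for each $n$ the functor $\Cat(B,-)$ carries the degreewise pushout square
\[
\xymatrix{
K_n\times A\ar[r]\ar[d]&C_n\ar[d]\\
L_n\times A\ar[r]&D_n
}
\]
in $\Cat$ to a pushout square of sets. The first step here is to describe $D_n$ explicitly. Because $i\colon K\to L$ is a monomorphism, $i_n\colon K_n\to L_n$ is an injection of sets, so $K_n\times A\to L_n\times A$ is the inclusion of a summand, $L_n\times A\cong(K_n\times A)\sqcup((L_n\setminus K_n)\times A)$. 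Since a pushout along a coproduct inclusion only adjoins the complementary summand ($X\sqcup_W(W\sqcup V)\cong X\sqcup V$), we get $D_n\cong C_n\sqcup((L_n\setminus K_n)\times A)$.

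It then remains to apply $\Cat(B,-)$ and invoke connectedness of $B$. The stated splitting $\Cat(B,C\sqcup D)\cong\Cat(B,C)\sqcup\Cat(B,D)$ for connected $B$ holds verbatim for arbitrary coproducts, so writing $P:=\Cat(B,A)$ we obtain $\Cat(B,K_n\times A)\cong K_n\times P$, $\Cat(B,L_n\times A)\cong L_n\times P$, and $\Cat(B,D_n)\cong\Cat(B,C_n)\sqcup((L_n\setminus K_n)\times P)$. The resulting square is exactly the pushout of $\Cat(B,C_n)\leftarrow K_n\times P\hookrightarrow L_n\times P$ along the coproduct inclusion $i_n\times\id_P$, hence a pushout of sets, as desired.

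The genuinely load-bearing hypothesis is the connectedness of $B$: it is exactly what makes $\Cat(B,-)$ commute with the coproducts appearing above, and without it the bottom-right corner would not split and the square would fail to be a pushout. The other point requiring care is the identification of $\Map(B,-)$ with the degreewise $\Cat(B,-)$; this is a routine naturality argument once one notices that a map out of $B\times\Delta[n]$ is pinned down by its top component, but it is what lets us transport the entire question into $\Cat$ and work one simplicial degree at a time.
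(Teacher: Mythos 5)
Your proof is correct and takes essentially the same route as the paper's: reduce to a degreewise statement via the identification $\Map(B,-)_k\cong\Cat(B,(-)_k)$, use the monomorphism hypothesis to write $D_k\cong C_k\sqcup\bigl((L_k\setminus K_k)\times A\bigr)$, and invoke connectedness of $B$ to split $\Cat(B,-)$ over the resulting coproducts. The only difference is expository: you spell out the Yoneda-style argument identifying $\ICat(B\times\Delta[k],D)$ with $\Cat(B,D_k)$, a step the paper passes over as immediate.
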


\begin{proof}
It suffices to prove that for each $k$, the square
\[
\xymatrix{
\Map(B,K\times A)_k\ar[r]\ar[d]&\Map(B,C)_k\ar[d]\\
\Map(B,L\times A)_k\ar[r] & \Map(B,D)_k
}
\]
is a pushout square of sets. Equivalently, it suffices to prove that for each $k$, the square in $\cat{Set}$
\[
\xymatrix{
\ICat(B\times\Delta[k],K\times A)\ar[r]\ar[d]&\ICat(B\times\Delta[k],C)\ar[d]\\
\ICat(B\times\Delta[k],L\times A)\ar[r] & \ICat(B\times\Delta[k],D)
}
\]
is a pushout square. This is equivalent to proving that 
\begin{equation}\label{square}
\xymatrix{
\Cat(B,K_k\times A)\ar[r]\ar[d]&\Cat(B,C_k)\ar[d]\\
\Cat(B,L_k\times A)\ar[r] & \Cat(B,D_k)
}
\end{equation}
is a pushout square, where now each corner is just the set of functors between ordinary categories.

Colimits in $\ICat$ are computed degreewise. Hence, for each $k$, we have a pushout diagram in $\Cat$
\[
\xymatrix{
K_k\times A\ar[r]\ar[d]_{i_k\times \id}&C_k\ar[d]^{f_k}\\
L_k\times A\ar[r] & D_k
}
\]
Let us denote by $Z_k$ the set $L_k-K_k$. Then the category $D_k$ is isomorphic to $C_k\sqcup Z_k\times A$ and the map $f_k$ is the obvious inclusion. 

Since the category $B$ is connected, there is an isomorphism
\[\Cat(B,D_k)\cong\Cat(B,C_k)\sqcup\Cat(B, Z_k\times A)\]
and an isomorphism $\Cat(B,S)\cong S$ for each set $S$. Hence we have
\[\Cat(B,D_k)=\Cat(B,C_k)\sqcup(\Cat(B, A)\times Z_k)\] 

On the other hand, we can compute
\[\Cat(B,C_k)\sqcup^{\Cat(B,K_k\times A)}\Cat(B,L_k\times A)\]
By connectedness of $B$, this coincides with 
\[\Cat(B,C_k)\sqcup^{\Cat(B,A)\times K_k}\Cat(B,A)\times L_k\]
which is clearly isomorphic to $\Cat(B,C_k)\sqcup(\Cat(B,A)\times Z_k)$ which finishes the proof that (\ref{square}) is a pushout square.
\end{proof}

\begin{coro}\label{coro-key lemma}
We keep the notations and hypothesis of the previous lemma. The square
\[
\xymatrix{
N(K\times A)\ar[r]\ar[d]_{N(i\times \id)}&NC\ar[d]^{Nf}\\
N(L\times A)\ar[r] & ND
}
\]
is a pushout square in $s\Sp$
\end{coro}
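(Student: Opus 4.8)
The plan is to verify the pushout levelwise and to recognize each levelwise square as an instance of Lemma~\ref{lemm-key lemma}. Since $s\Sp=\Sp^{\Delta\op}$ is a functor category, colimits in $s\Sp$ are computed levelwise, so a commutative square in $s\Sp$ is a pushout if and only if for every $n\ge 0$ the square of simplicial sets obtained by evaluating at external level $n$ is a pushout in $\Sp$. Hence it suffices to prove that for each $n$ the square with corners $(N(K\times A))_n$, $(NC)_n$, $(N(L\times A))_n$ and $(ND)_n$ is a pushout in $\Sp$.

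The main step is a natural isomorphism of simplicial sets $(NE)_n\cong\Map([n],E)$, valid for every internal category $E$, where $[n]$ is regarded as a discrete internal category. First I would note that $N[n]=F(n)$: in each external degree $p$ the nerve $N[n]$ is the discrete simplicial set $\Cat([p],[n])$, by the explicit description of $N$. Since $N$ is simplicially enriched (Remark~\ref{rem-N is simplicially enriched}), this gives $\Map([n],E)=\Map_{s\Sp}(N[n],NE)=\Map_{s\Sp}(F(n),NE)$. A short computation unwinding the mapping-space formula $\Map_{s\Sp}(X,Y)_k=s\Sp(X\times\Delta[k],Y)$ then identifies $\Map_{s\Sp}(F(n),NE)$ with $(NE)_n$, naturally in $E$.

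Granting this identification, the level-$n$ square above is naturally isomorphic to
\[
\xymatrix{
\Map([n],K\times A)\ar[r]\ar[d]&\Map([n],C)\ar[d]\\
\Map([n],L\times A)\ar[r] & \Map([n],D)
}
\]
For every $n\ge 0$ the category $[n]$ is connected, since every object receives a morphism from $0$, so Lemma~\ref{lemm-key lemma} applies with $B=[n]$ and shows that this square is a pushout in $\Sp$. As $n$ was arbitrary, the original square is a pushout in $s\Sp$, which is what we wanted.

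The only genuinely delicate point I anticipate is the bookkeeping in the isomorphism $(NE)_n\cong\Map([n],E)$: one must keep the two simplicial directions straight, the external direction in which level $n$ is taken versus the internal direction recorded by $\Map(-,-)_k$, and check naturality in $E$. Everything else is a formal consequence of the levelwise computation of colimits together with the connectedness of each $[n]$, so I expect the proof to be short.
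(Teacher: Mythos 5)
Your proof is correct and takes essentially the same approach as the paper: verify the pushout in each external degree and apply Lemma~\ref{lemm-key lemma} with $B=[n]$, using that each $[n]$ is connected. The only difference is that you spell out the natural identification $(NE)_n\cong\Map([n],E)$, which the paper's (very terse) proof uses implicitly.
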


\begin{proof}
It suffices to check it in each degree. But the category $[n]$ is connected for all $n$, hence according to the previous proposition, the square
\[
\xymatrix{
N_n(K\times A)\ar[r]\ar[d]_{N_n(i\times \id)}&N_nC\ar[d]^{N_nf}\\
N_n(L\times A)\ar[r] & N_nD
}
\]
is a pushout square in $\Sp$.
\end{proof}

Using this fact, we have the following proposition which gives a necessary condition for a simplicial space to be cofibrant in the projective model structure.

\begin{prop}\label{prop-characterization of cofibrants}
Let $X$ be a cofibrant simplicial space. Then the unit map $X\to NSX$ is an isomorphism.
\end{prop}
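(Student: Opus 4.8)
The plan is to argue by induction over the cell structure of a cofibrant object. Let $\mathcal{C}$ denote the class of objects $X$ of $s\Sp$ for which the unit map $\eta_X:X\to NSX$ is an isomorphism. Since an isomorphism in the arrow category is stable under retracts and $\eta$ is a natural transformation, $\mathcal{C}$ is closed under retracts. Because every cofibrant object of $s\Sp_{proj}$ is a retract of an $I$-cell complex, and such cell complexes are built from the generating cofibrations by pushout and transfinite composition, it suffices to show that $\mathcal{C}$ contains the initial object and is closed under pushouts along generating cofibrations and under transfinite composition. The initial object $\varnothing$ lies in $\mathcal{C}$ because $S$ preserves it and $N$ preserves it. The transfinite composition step is formal: $S$ preserves all colimits (being a left adjoint) and $N$ preserves filtered colimits by proposition~\ref{prop-N preserves filtered colimits}, so $NS$ commutes with the $\lambda$-sequential colimits appearing in a cell presentation; as $\eta$ is natural and a colimit of isomorphisms is an isomorphism, a sequential colimit of objects of $\mathcal{C}$ lies in $\mathcal{C}$.

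The heart of the argument is the pushout step, and this is where corollary~\ref{coro-key lemma} enters. First I would record the computation $S(F(n)\times K)\cong [n]\times d(K)$ for every simplicial set $K$, where $d(K)$ denotes the discrete internal category on $K$. Writing $K$ as the colimit of its simplices and using that $F(n)\times-$ preserves colimits in $s\Sp$, that $S$ preserves colimits, that $[n]\times-$ preserves colimits in $\ICat$ (as $\ICat$ is cartesian closed), and that $d$ preserves colimits, this reduces to the defining value of $S$ on the objects $F(p)\times\Delta[q]$. Applying $N$, which preserves products, together with $N(d(K))=K$ (the constant simplicial space) and $N([n])=F(n)$, yields the identification $NS(F(n)\times K)\cong F(n)\times K$; in particular the source and target of each generating cofibration already lie in $\mathcal{C}$.

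Now suppose $Y$ is obtained as a pushout
\[
\xymatrix{
F(n)\times\partial\Delta[m]\ar[r]\ar[d]&X\ar[d]\\
F(n)\times\Delta[m]\ar[r]&Y
}
\]
with $X\in\mathcal{C}$. Applying the colimit-preserving functor $S$ and using the computation above turns this into a pushout in $\ICat$ whose left vertical map is $d(\partial\Delta[m])\times[n]\to d(\Delta[m])\times[n]$, that is, a map of the form $i\times\id$ with $A=[n]\in\Cat$ and $i:\partial\Delta[m]\to\Delta[m]$ a monomorphism of spaces. Hence corollary~\ref{coro-key lemma} applies and shows that applying $N$ yields a pushout square in $s\Sp$. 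The unit $\eta$ gives a morphism from the original pushout square to this one which, by the previous paragraph and the hypothesis $X\in\mathcal{C}$, is an isomorphism on the three objects of the generating span. A natural transformation of pushout squares that is an isomorphism on the span is an isomorphism on the pushout, so $\eta_Y$ is an isomorphism and $Y\in\mathcal{C}$.

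The main obstacle is exactly this pushout step, since the nerve does not preserve general pushouts and so one cannot proceed naively. The crucial point is that after applying $S$ the pushout along a generating cofibration becomes a pushout of the special shape $i\times\id_{[n]}$ covered by the key lemma, which is precisely the class of pushouts for which $N$ is guaranteed to commute with the pushout. Once this matching with the hypotheses of lemma~\ref{lemm-key lemma} is secured, the remainder is the routine cell-induction bookkeeping summarized in the first paragraph.
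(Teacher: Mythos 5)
Your proposal is correct and takes essentially the same route as the paper: the paper's proof is likewise a cell induction (retracts, pushouts along generating cofibrations handled by corollary~\ref{coro-key lemma} after applying $S$, and transfinite compositions handled by the fact that $S$ preserves all colimits and $N$ preserves filtered ones, proposition~\ref{prop-N preserves filtered colimits}). The only point you gloss over is that your abstract identification $NS(F(n)\times K)\cong F(n)\times K$ must be upgraded to the statement that the \emph{unit} itself is invertible at $F(n)\times K$; this follows from full faithfulness of $N$ (any object isomorphic to a nerve has invertible unit, by the triangle identity and invertibility of the counit), which is precisely the paper's opening remark in its proof.
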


\begin{proof}
First we notice that the unit map $X\to NSX$ is an isomorphism if and only if $X\cong NC$ for some $C$ in $\ICat$. Indeed if $X\cong NC$, then $NSX\cong NSNC\cong NC\cong X$ by fully faithfulness of $N$. We say that $X$ is a nerve if $X\to NSX$ is an isomorphism. The proof is now divided in a few steps.

(1) If $X$ is a nerve and $F(n)\times K\to X$ is any map, then for any monomorphism $K\to L$ in $\Sp$, the pushout of
\[\xymatrix{
F(n)\times K\ar[d]\ar[r]& X\\
F(n)\times L& 
}
\]
is a nerve. Indeed by the previous proposition, the pushout is the nerve of the pushout of the following diagram in $\ICat$:
\[\xymatrix{
[n]\times K\ar[d]\ar[r]& SX\\
[n]\times L& 
}
\]

(2) If $X=\on{colim}_{i\in I} X_i$ is a filtered colimit of nerves, then $X$ is a nerve. Indeed, if for all $i$, the map $X_i\to NSX_i$ is an isomorphism, then so is $X\to NSX$ since $N$ and $S$ both preserve filtered colimits.

(3) Let $\alpha$ be some ordinal. let $X_0\to X_1\to \ldots\to Y=\on{colim}_{\beta<\alpha} X_\beta$ be a transfinite composition of maps in $s\Sp$ such that $X_0$ is a nerve and each map in the transfinite composition is a pushout of a map of the form $F(n)\times K\to F(n)\times L$ for some integer $n$ and some monomorphism $K\to L$. Then we claim that $Y$ is a nerve. This is a transfinite induction argument. If $X_\beta$ is a nerve for some ordinal $\beta<\alpha$, then $X_{\beta+1}$ is a nerve by (1). If $\beta$ is a limit ordinal and $X_\gamma$ is a nerve for all $\gamma<\beta$, then $X_\beta=\on{colim}_{\gamma<\beta}X_\gamma$ is a nerve by (2).

(4) If $X$ is a nerve, then any retract of $X$ is a nerve. Indeed, if $Y\to X\to Y$ is a retract, then the map $Y\to NSY$ is a retract of $X\to NSX$. Therefore, if $X\to NSX$ is an isomorphism, so is $Y\to NSY$.

(5) To conclude the proof it suffices to recall that if $X$ is cofibrant, then $X$ is a retract of some cell complex $Y$ in $s\Sp_{proj}$. And by definition of a cell complex, the map $\varnothing \to Y$ is a transfinite composition of pushouts of maps of the form $F(n)\times K\to F(n)\times L$ with $K\to L$ a monomorphism in $\Sp$. Since $\varnothing$ is a nerve, we are done.
\end{proof}

\section{The model structure}

We say that a map in $s\Sp$ is a levelwise (resp. Segal, resp. Rezk) weak equivalence if it is a weak equivalence in $s\Sp_{proj}$ (resp. $\cat{SS}_{proj}$, resp. $\cat{CSS}_{proj}$).
We say that a map in $\ICat$ is a levelwise (resp. Segal, resp. Rezk) weak equivalence if its nerve is a levelwise (resp. Segal, resp. Rezk) weak equivalence of simplicial spaces. In this section, we construct three model structures on $\ICat$ whose weak equivalences are respectively the levelwise, Segal and Rezk weak equivalences.

\subsection{The levelwise model structure.}\label{Icat projective}

Let $I_\Sp$ and $J_\Sp$ be a set of generating cofibrations and trivial cofibrations in $\Sp$. The projective model structure on $s\Sp$ admits the maps $f\times F(n)$ with $f$ in $I_\Sp$ (resp. $f\in J_\Sp$) and $n\in\mathbb{Z}_{\geq 0}$  as generating cofibrations (resp. generating trivial cofibrations). We denote those sets by $I$ and $J$.

We can now prove the following:

\begin{theo}
There is a model structure on $\ICat$ whose weak equivalences are the levelwise weak equivalences and whose fibrations are the maps whose nerve is a fibration in $s\Sp_{proj}$. Its cofibrations are the $SI$-cofibrations and its trivial cofibrations are the $SJ$-cofibrations. Moreover the functor $N$ preserves cofibrations.
\end{theo}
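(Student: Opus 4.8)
The plan is to apply the transfer theorem (Theorem~\ref{theo-transferred model structure}) to the adjunction $S:s\Sp_{proj}\leftrightarrows\ICat:N$. Once the hypotheses of that theorem are verified, the resulting transferred model structure automatically has weak equivalences and fibrations detected by $N$, which are exactly the levelwise weak equivalences and the maps whose nerve is a projective fibration; the generating (trivial) cofibrations are $SI$ and $SJ$, so the cofibrations and trivial cofibrations are the $SI$- and $SJ$-cofibrations as claimed; and the last sentence of Theorem~\ref{theo-transferred model structure} gives that $N$ preserves cofibrations. So the entire statement reduces to checking the two bullet-pointed hypotheses.

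The first hypothesis, that $N$ preserves filtered colimits, is already established in Proposition~\ref{prop-N preserves filtered colimits}, so I would simply cite it. The substantive work is the second hypothesis: I must show that $N$ sends pushouts of maps in $SI$ to $I$-cofibrations and pushouts of maps in $SJ$ to $J$-cofibrations. Here is where the key lemma enters. A generating (trivial) cofibration in $s\Sp_{proj}$ has the form $g\times F(n)$ with $g:K\to L$ a monomorphism in $\Sp$ (a generating cofibration or trivial cofibration of $\Sp$), so its image under $S$ is the map $[n]\times K\to[n]\times L$ in $\ICat$. A pushout of $S(g\times F(n))$ along an arbitrary map $[n]\times K\to C$ in $\ICat$ is thus exactly the kind of pushout analyzed in Lemma~\ref{lemm-key lemma} and its Corollary~\ref{coro-key lemma} (with $A=[n]$): the corollary tells us precisely that $N$ applied to such a pushout square yields a pushout square in $s\Sp$, namely the pushout of $N(i\times\id)=g\times F(n)$.

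Therefore I would argue as follows. Given a pushout in $\ICat$ of a map in $SI$ (resp.\ $SJ$), Corollary~\ref{coro-key lemma} identifies its image under $N$ with the pushout in $s\Sp$ of the corresponding map $g\times F(n)$ in $I$ (resp.\ $J$) along some map out of its source. But pushouts of maps in $I$ (resp.\ $J$) are $I$-cell complexes (resp.\ $J$-cell complexes), hence in particular $I$-cofibrations (resp.\ $J$-cofibrations). This is exactly the content of the second hypothesis, so the transfer applies and the theorem follows.

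The main obstacle is precisely the failure of $N$ to preserve general pushouts, which is the usual difficulty in transferring a model structure along a right adjoint; the whole point of Lemma~\ref{lemm-key lemma} is to circumvent this by showing that the particular pushouts arising from the generators $S(g\times F(n))$ \emph{are} preserved by $N$, thanks to the connectedness of the categories $[n]$ used in the lemma. Once one recognizes that every pushout of a map in $SI$ or $SJ$ has source of the form $[n]\times K$ with $[n]$ connected, the verification is immediate from the corollary, and no delicate cell-by-cell analysis beyond what the key lemma already provides is required.
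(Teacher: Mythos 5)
Your proposal is correct and follows essentially the same route as the paper's own proof: apply the transfer theorem to the adjunction $(S,N)$, cite Proposition~\ref{prop-N preserves filtered colimits} for the filtered-colimit hypothesis, and use Corollary~\ref{coro-key lemma} (with $A=[n]$, noting $S(K\times F(n)\to L\times F(n))$ is $K\times[n]\to L\times[n]$) to see that $N$ of a pushout of a map in $SI$ (resp.\ $SJ$) is a pushout of a map in $I$ (resp.\ $J$), hence an $I$-cofibration (resp.\ $J$-cofibration). No gaps; this is the paper's argument.
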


\begin{proof}
We apply theorem~\ref{theo-transferred model structure}. We already know that $N$ preserves filtered colimits by proposition~\ref{prop-N preserves filtered colimits}. 

We need to check that $N$ of a pushout of a map in $SI$ is an $I$-cofibration. Let $i:K\times F(n)\to L\times F(n)$ be a map in $I$. Then $Si$ can be identified with $K\times[n]\to L\times[n]$. Let us consider a pushout square
\[
\xymatrix{
K\times[n]\ar[d]_{Si}\ar[r]^-u&C\ar[d]^f\\
L\times[n]\ar[r]& D}
\]
According to corollary~\ref{coro-key lemma}, the map $N(f)$ is the pushout of $NS(i)=i$ along $N(u)$. In particular, it is an $I$-cofibration. Similarly $N$ of a pushout of a map of $J$ is a $J$-cofibration.
\end{proof}

We denote by $\ICat^{LW}$ the category of internal categories equipped with this model structure. Note that this model category is cofibrantly generated. Since $\ICat$ is locally presentable, $\ICat^{LW}$ is combinatorial.

\begin{prop}
The model category $\ICat^{LW}$ is proper.
\end{prop}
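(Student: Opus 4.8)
The goal is to show that $\ICat^{LW}$ is both left proper and right proper. The plan is to deduce both properties directly from the corresponding properties of $s\Sp_{proj}$, exploiting the fact that the nerve functor $N$ preserves and reflects weak equivalences (by definition of the levelwise weak equivalences) and is fully faithful, so that $\ICat$ is equivalent to a reflective subcategory of $s\Sp$. The main tool will be Corollary~\ref{coro-key lemma}, which tells us that $N$ preserves enough pushouts.

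For right properness I would proceed as follows. Suppose given a pullback square in $\ICat$ in which one leg is a fibration and the opposite leg is a weak equivalence; I want the remaining leg to be a weak equivalence. Since $\Map(-,-)$ and hence $N$ preserve limits in both variables (as noted in the subsection on mapping spaces), the functor $N$ preserves pullbacks, so applying $N$ gives a pullback square in $s\Sp$ in which one leg is a fibration (by the definition of fibrations in $\ICat^{LW}$) and the opposite leg is a levelwise weak equivalence. Right properness of $s\Sp_{proj}$ then forces the image of the fourth leg to be a levelwise weak equivalence, and because $N$ reflects weak equivalences by definition, the fourth leg is a weak equivalence in $\ICat^{LW}$. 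The key facts here are that $N$ preserves pullbacks, that $N$ detects fibrations and weak equivalences by construction, and that the projective model structure on $s\Sp$ is proper.

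For left properness the situation is more delicate because $N$ does \emph{not} preserve arbitrary pushouts, so I cannot simply transport a pushout square from $\ICat$ to $s\Sp$. The plan is to reduce to the pushouts that $N$ does preserve. I would take a pushout square in $\ICat$ along a cofibration $i$ with a weak equivalence on the perpendicular leg and factor $i$ as a transfinite composite of pushouts of generating cofibrations, i.e. of maps of the form $S(K\times F(n))\to S(L\times F(n))$, which are exactly the maps $K\times[n]\to L\times[n]$ with $K\to L$ a monomorphism. By Corollary~\ref{coro-key lemma}, $N$ carries each such basic pushout to a genuine pushout in $s\Sp$ along the projective cofibration $K\times F(n)\to L\times F(n)$. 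Since $N$ preserves filtered colimits (Proposition~\ref{prop-N preserves filtered colimits}) and cofibrations, $N$ of the whole transfinite composite is again a pushout along a projective cofibration in $s\Sp$; applying $N$ to the original square thus yields a pushout square in $s\Sp$ along a cofibration with a levelwise weak equivalence on the parallel leg.

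Once the square has been transported to $s\Sp$, left properness of $s\Sp_{proj}$ guarantees that the induced map on the remaining pushout corner is a levelwise weak equivalence, and reflecting back through $N$ gives the conclusion in $\ICat^{LW}$. The main obstacle I anticipate is exactly this transport step for left properness: one must verify carefully that the combination of Corollary~\ref{coro-key lemma} with the transfinite-composition and filtered-colimit arguments (essentially the bookkeeping already carried out in the proof of Proposition~\ref{prop-characterization of cofibrants}) really does exhibit $N$ applied to an arbitrary cofibration pushout as a pushout in $s\Sp$, and that retracts cause no trouble. Right properness, by contrast, is essentially immediate from limit-preservation of $N$.
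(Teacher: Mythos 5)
Your proposal is correct, and its right-properness half is exactly the paper's argument: $N$ preserves pullbacks, detects fibrations and weak equivalences by construction, and $s\Sp_{proj}$ is right proper. For left properness, however, you organize the argument differently from the paper, even though both rest on the same two pillars, namely corollary~\ref{coro-key lemma} and the left properness of $s\Sp_{proj}$. The paper never establishes your stronger intermediate claim that $N$ carries an \emph{arbitrary} pushout along a cofibration to a pushout in $s\Sp$. Instead, it first notes that weak equivalences in $\ICat^{LW}$ are stable under filtered colimits (since $N$ preserves filtered colimits and preserves and reflects weak equivalences), and uses this, via the standard closure properties of ``pushouts along a fixed map preserve weak equivalences'' (closure under cobase change, transfinite composition, and retracts), to reduce left properness to the single case of a generating cofibration $K\times[n]\to L\times[n]$. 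For that case it applies $N$ to the diagram consisting of two pushout squares and invokes corollary~\ref{coro-key lemma} \emph{twice} — for the left-hand square and for the outer rectangle — to deduce that the right-hand square is a pushout in $s\Sp$, at which point left properness of $s\Sp_{proj}$ and the fact that $N$ preserves cofibrations finish the job. Your route pushes all the transfinite and retract bookkeeping into the pushout-preservation statement for $N$ itself: cell induction pastes the pushouts of corollary~\ref{coro-key lemma} at successor stages and uses preservation of filtered colimits at limit stages, and retracts are indeed harmless, because by the retract argument a cofibration $i\colon A\to B$ is a retract of a cell complex \emph{under the fixed source} $A$, so the pushout square along $i$ is a retract, in the category of squares, of the pushout square along the cell complex, and a retract of a pushout square is again a pushout square (the comparison map out of the pushout of the span is a retract of an isomorphism, hence an isomorphism). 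What each approach buys: yours yields a reusable, purely categorical fact ($N$ preserves pushouts along all cofibrations, not just generating ones) from which properness is immediate; the paper's stays at the model-categorical level and avoids the square-level pasting and retract verifications by citing the standard reduction to generating cofibrations. Do note that your sentence ``since $N$ preserves filtered colimits and cofibrations, $N$ of the whole transfinite composite is again a pushout'' is not by itself a proof — the inductive pasting you allude to (parallel to the proof of proposition~\ref{prop-characterization of cofibrants}) is the actual content — but the plan is sound and the verification goes through.
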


\begin{proof}
The right properness follows directly from the right properness of $s\Sp_{proj}$, using the fact that the functor $N$ preserves pullbacks, fibrations and preserves and reflects weak equivalences. 

For the left properness, first notice that the weak equivalences in $\ICat^{LW}$ are stable under filtered colimits. Indeed, weak equivalences are preserved and reflected by the nerve functor to $s\Sp$ which is a filtered colimit preserving functor. The levelwise weak equivalences in $s\Sp$ are stable under filtered colimits because colimits are computed levelwise and the same is true in $\Sp$.

Because of this observation, in order to show that $\ICat$ is left proper, it suffices to prove that for any generating cofibration $K\times[n]\to L\times[n]$ and any weak equivalence $v:C\to D$ in $\ICat^{LW}$ fitting in a diagram
\[
\xymatrix{
K\times[n]\ar[r]\ar[d]&C\ar[r]^v\ar[d]& D\ar[d]\\
L\times[n]\ar[r]&E\ar[r]^w&F}
\]
where both squares are pushouts, the map $w$ is a weak equivalence. We can hit this diagram with $N$ and we get a diagram in $s\Sp$
\[
\xymatrix{
N(K\times[n])\ar[r]\ar[d]&NC\ar[r]^{Nv}\ar[d]& ND\ar[d]\\
N(L\times[n])\ar[r]&NE\ar[r]^{Nw}&NF}
\]
Because of corollary~\ref{coro-key lemma}, the leftmost square and the total square are pushouts. This implies that the rightmost square is a pushout square. But now the result follows directly from the left properness of $s\Sp_{proj}$ and the fact that $N$ preserves cofibrations.
\end{proof}

\begin{prop}\label{prop-simplicial model structure}
The functor $\Map:(\ICat^{LW})\op\times\ICat^{LW}\to \Sp$ makes $\ICat^{LW}$ into a simplicial model category.
\end{prop}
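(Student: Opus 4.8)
The plan is to recognize $\ICat^{LW}$ as a model structure transferred from $s\Sp_{proj}$ along the adjunction $(S,N)$, and then to invoke Proposition~\ref{prop-transferred is simplicial}, which asserts exactly that a model structure transferred along a \emph{simplicial} adjunction is again simplicial. So the entire proof reduces to checking that the two hypotheses of that proposition are in place: that the adjunction satisfies the transfer hypotheses of Theorem~\ref{theo-transferred model structure}, and that it is a simplicial adjunction.

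First I would observe that the transfer hypotheses are already verified: the model category $\ICat^{LW}$ was constructed in the theorem immediately preceding this proposition precisely by applying Theorem~\ref{theo-transferred model structure} to the adjunction $S:s\Sp_{proj}\leftrightarrows\ICat:N$. In particular $N$ preserves filtered colimits (Proposition~\ref{prop-N preserves filtered colimits}) and sends pushouts of maps in $SI$ (resp.\ $SJ$) to $I$-cofibrations (resp.\ $J$-cofibrations), the latter being exactly the content established there using Corollary~\ref{coro-key lemma}.

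Next I would recall that $(S,N)$ is a simplicial adjunction. This is Remark~\ref{rem-N is simplicially enriched}: the functor $N$ is simplicially enriched and preserves cotensors by simplicial sets, so by~\cite[Theorem 4.85.]{kellybasic} the adjunction $(S,N)$ is simplicial. Moreover the simplicial enrichment on $\ICat$ named in the statement, namely $\Map(C,D)=\Map_{s\Sp}(NC,ND)$ from the \emph{Mapping spaces} subsection, is exactly the enrichment carried by this simplicial adjunction, so there is no ambiguity about which simplicial structure is meant.

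With both inputs secured, Proposition~\ref{prop-transferred is simplicial} applies verbatim and yields that $\ICat^{LW}$ is a simplicial model category. I do not expect any genuine obstacle here: the only point requiring a moment's attention is confirming that the mapping space in the statement coincides with the one produced by the adjunction, which is immediate from the definitions. Should one prefer a self-contained argument, one could instead unwind the pushout--product (corner) condition directly, using that $\Map(-,-)$ turns colimits in the first variable into limits and that $Si$ identifies $K\times F(n)\to L\times F(n)$ with $K\times[n]\to L\times[n]$; but citing Proposition~\ref{prop-transferred is simplicial} is the cleanest route and avoids reproving what has already been done in full generality.
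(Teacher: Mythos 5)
Your proposal is correct and is exactly the paper's argument: the paper's proof likewise consists of citing Proposition~\ref{prop-transferred is simplicial} together with the fact that $(S,N)$ is a simplicial adjunction (Remark~\ref{rem-N is simplicially enriched}). Your additional checks (that the transfer hypotheses were already verified when constructing $\ICat^{LW}$, and that the enrichment in the statement agrees with the one from the adjunction) are just a more explicit spelling-out of the same route.
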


\begin{proof}
This follows from proposition~\ref{prop-transferred is simplicial} and the fact that the adjunction $(S,N)$ is simplicial (see remark~\ref{rem-N is simplicially enriched}).
\end{proof}

\begin{prop}
The Quillen adjunction 
\[S:s\Sp_{proj}\leftrightarrows \ICat^{LW}:N\]
is a Quillen equivalence.
\end{prop}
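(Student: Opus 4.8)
The plan is to invoke the standard recognition criterion for Quillen equivalences: a Quillen adjunction $(S,N)$ is a Quillen equivalence as soon as the right adjoint $N$ reflects weak equivalences between fibrant objects and, for every cofibrant object $X$ of $s\Sp_{proj}$, the derived unit map $X\to NR(SX)$ is a weak equivalence, where $R$ denotes a fibrant replacement functor in $\ICat^{LW}$. This reduces the statement to two checks, and I expect both to follow from facts already established rather than from new computation.

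The first check should be immediate. By the very definition of the levelwise model structure, a map in $\ICat$ is a weak equivalence precisely when its nerve is a levelwise weak equivalence of simplicial spaces. Hence $N$ both preserves and reflects weak equivalences between arbitrary objects, and in particular reflects them between fibrant objects.

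The second check is where proposition~\ref{prop-characterization of cofibrants} enters, and this is really the heart of the matter. First I would observe that, since $X$ is cofibrant, that proposition furnishes an isomorphism $X\cong NSX$. Next, the fibrant replacement map $SX\to R(SX)$ is in particular a levelwise weak equivalence in $\ICat^{LW}$; applying $N$, which preserves weak equivalences, yields a weak equivalence $NSX\to NR(SX)$ in $s\Sp_{proj}$. The derived unit $X\to NR(SX)$ is then the composite of the isomorphism $X\cong NSX$ with this weak equivalence, and is therefore itself a weak equivalence.

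The only genuine obstacle, namely controlling the behaviour of the left adjoint $S$ on cofibrant objects, has already been resolved by proposition~\ref{prop-characterization of cofibrants}; given that identification of cofibrant simplicial spaces with nerves, both hypotheses of the recognition criterion fall out formally. I do not anticipate any difficulty in the fibrant-replacement step, since $N$ preserves all weak equivalences and so the derived unit can be computed strictly, without any auxiliary cofibrant or fibrant replacement in the source.
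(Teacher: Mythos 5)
Your proof is correct and rests on exactly the same two ingredients as the paper's: proposition~\ref{prop-characterization of cofibrants} (the unit $X\to NSX$ is an isomorphism when $X$ is cofibrant) and the fact that $N$ preserves and reflects all weak equivalences by the very definition of the transferred model structure. The only difference is packaging --- the paper verifies the definition of Quillen equivalence directly on adjoint pairs $f:SX\to C$, $g:X\to NC$, whereas you verify the equivalent derived-unit recognition criterion --- so the substance of the argument is the same.
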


\begin{proof}
Let $X\in s\Sp_{proj}$ be cofibrant and $C$ in $\ICat^{LW}$ be fibrant. Let $f:SX\to C$ be a map. Since the functor $N$ preserves and reflects weak equivalences, $f$ is a weak equivalence if and only of $N(f):NSX\to NC$ is a weak equivalence. But since $X$ is cofibrant, the unit map $X\to NSX$ is an isomorphism by proposition~\ref{prop-characterization of cofibrants}. Therefore, $f$ is a weak equivalence if and only if its adjoint $g:X\to NC$ is a weak equivalence.
\end{proof}

\begin{rem}
The transfer of the model structure along the map $\ICat\to s\Sp_{proj}$ is analogous to~\cite[Theorem 7.13]{fioremodel}. In that paper, the authors transfer the projective Thomason model structure on simplicial objects in $\Cat$ to a model structure on the category of double categories (i.e. internal categories in categories). In particular,~\cite[Theorem 10.7]{fioremodel} should be compared to corollary~\ref{coro-key lemma}.
\end{rem}

\subsection{The Segal model structure.}\label{Icat Segal}

\begin{defi}
The category $\ICat^S$ is the left Bousfield localization of $\ICat^{LW}$ with respect to the maps $\mathbb{L}SG(n)\to \mathbb{L}SF(n)$.
\end{defi}

This Bousfield localization exists by~\ref{theo-existence Bousfield localization} since $\ICat^{LW}$ is left proper and combinatorial and is simplicial since the generating cofibrations of $\ICat^{LW}$ have cofibrant source.

By proposition~\ref{prop-Bousfield localization}, we have a Quillen equivalence
\[S:\cat{SS}_{proj}\leftrightarrows \ICat^S:N\]
in which the functor $N$ preserves and reflects weak equivalences.

\subsection{The Rezk model structure.}\label{Icat Rezk}

Recall that $I[1]$ denotes the groupoid completion of the category $[1]$.

We can now state the main theorem of the paper:

\begin{theo}\label{theo-main}
There is a left proper and simplicial model structure $\ICat$ on the category of internal categories in simplicial sets whose weak equivalences are the Rezk equivalences, and whose cofibrations are the cofibrations of $\ICat^{LW}$. Moreover, the adjunction
\[S:\cat{CSS}_{proj}\leftrightarrows \ICat:N\]
is a Quillen equivalence and the functor $N$ preserves and reflects weak equivalences.
\end{theo}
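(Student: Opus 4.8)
The plan is to obtain the Rezk model structure on $\ICat$ as one further left Bousfield localization, exactly parallel to the way $\cat{CSS}_{proj}$ is obtained from $\cat{SS}_{proj}$. Concretely, I would define $\ICat$ to be the left Bousfield localization of $\ICat^S$ with respect to the single map $\L S E\to \L S F(0)$, that is, with respect to $\L S\{E\to F(0)\}$. All of the remaining work then consists in checking that the hypotheses of theorem~\ref{theo-existence Bousfield localization} and proposition~\ref{prop-Bousfield localization} are in force, since the substantial input---the levelwise transfer and its good formal properties---was already established in \S\ref{Icat projective} and \S\ref{Icat Segal}.

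First I would check that this localization exists and has the asserted formal properties. The model category $\ICat^S$ is a left Bousfield localization of $\ICat^{LW}$, hence left proper and combinatorial by theorem~\ref{theo-existence Bousfield localization}, so the localization $L_{\L S\{E\to F(0)\}}\ICat^S$ exists and is again left proper and combinatorial. For simpliciality I would invoke the last clause of theorem~\ref{theo-existence Bousfield localization}: it suffices that $\ICat^S$ admit a set of generating cofibrations with cofibrant source. These are the maps $SI$, i.e. the maps $[n]\times K\to [n]\times L$ with $K\to L$ a generating cofibration of $\Sp$; their sources $[n]\times K = S(F(n)\times K)$ are images under the left Quillen functor $S$ of the objects $F(n)\times K$, which are cofibrant in $s\Sp_{proj}$, and are therefore cofibrant. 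Finally, since a left Bousfield localization does not change the cofibrations, the cofibrations of $\ICat$ coincide with those of $\ICat^S$, which are the cofibrations of $\ICat^{LW}$.

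Next I would produce the Quillen equivalence and the identification of the weak equivalences by feeding the Quillen equivalence $S:\cat{SS}_{proj}\leftrightarrows\ICat^S:N$ of \S\ref{Icat Segal}---in which $N$ already preserves and reflects weak equivalences---into proposition~\ref{prop-Bousfield localization}, taking the localizing set to be $\{E\to F(0)\}$. By definition $L_{\{E\to F(0)\}}\cat{SS}_{proj}=\cat{CSS}_{proj}$, while $L_{\L S\{E\to F(0)\}}\ICat^S=\ICat$, so the proposition yields the Quillen equivalence $S:\cat{CSS}_{proj}\leftrightarrows\ICat:N$. Its last clause guarantees that $N$ continues to preserve and reflect weak equivalences after localization. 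A map $f$ in $\ICat$ is thus a weak equivalence if and only if $Nf$ is a weak equivalence in $\cat{CSS}_{proj}$, which is precisely the definition of a Rezk equivalence; this identifies the weak equivalences as claimed.

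I do not expect a genuine obstacle here: every ingredient has been arranged in advance, and the argument is essentially a bookkeeping application of proposition~\ref{prop-Bousfield localization}. The one point requiring a moment's care is the simpliciality claim, which rests on the cofibrancy of the sources $[n]\times K$ of the generating cofibrations; this in turn is an immediate consequence of the fact that $S$ is left Quillen out of $s\Sp_{proj}$ together with the cofibrancy of $F(n)\times K$. The real mathematical content---that such a localization can be transferred along $N$ at all---was already absorbed into the key lemma~\ref{lemm-key lemma} and proposition~\ref{prop-characterization of cofibrants} underpinning the levelwise model structure.
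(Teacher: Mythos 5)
Your proposal is correct and takes essentially the same route as the paper: the paper also defines $\ICat$ as the left Bousfield localization of $\ICat^S$ at $\mathbb{L}SNI[1]\to[0]$, which is the same map as your $\L S E\to \L S F(0)$ (since $E=NI[1]$ and $F(0)$ is cofibrant with $SF(0)=[0]$), and then invokes theorem~\ref{theo-existence Bousfield localization} for existence, left properness and simpliciality, and proposition~\ref{prop-Bousfield localization} for the Quillen equivalence and the preservation and reflection of weak equivalences by $N$. Your explicit verification that the generating cofibrations $[n]\times K\to[n]\times L$ have cofibrant sources is a detail the paper leaves implicit (it is recorded earlier, in \S\ref{Icat Segal}), but it is the same argument.
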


\begin{proof}
The model category $\ICat$ is defined to be the left Bousfield localization of $\ICat^S$ with respect to the unique map $\mathbb{L}SNI[1]\to [0]$. The existence of the model structure and the fact that it is simplicial and left proper follows from theorem~\ref{theo-existence Bousfield localization}. The equivalence with $\cat{CSS}_{proj}$ and the fact that $N$ preserves and reflects weak equivalences follows from~\ref{prop-Bousfield localization}.
\end{proof} 

\begin{rem}
In~\cite{shulmaninternal}, Jeremy Hahn gave another proof of the equivalence of relative categories $N:\ICat\to\cat{CSS}_{proj}$ relying on~\cite{barwickrelative}. 
\end{rem}

\subsection{The fibrant objects.}

We can characterize the fibrant internal categories.

\begin{prop}\label{prop-N preserves fibrant objects}

The fibrant objects in $\ICat^S$ and $\ICat$ are the internal categories whose nerve is fibrant in $\cat{SS}_{proj}$ and $\cat{CSS}_{proj}$ respectively.
\end{prop}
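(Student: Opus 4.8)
The plan is to use the general machinery of transferred and Bousfield-localized model structures together with the fact that the nerve functor $N$ preserves and reflects weak equivalences. The key point is that fibrant objects in a left Bousfield localization $L_S\cat{X}$ are characterized (by theorem~\ref{theo-existence Bousfield localization}) as the objects that are fibrant in $\cat{X}$ and $S$-local. Since both $\ICat^S$ and $\ICat$ are obtained as iterated Bousfield localizations of $\ICat^{LW}$, and since the model structures on $\cat{SS}_{proj}$ and $\cat{CSS}_{proj}$ are the corresponding localizations of $s\Sp_{proj}$, I would first reduce the statement to a comparison of locality conditions across the adjunction $(S,N)$.

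First I would treat the case of $\ICat^S$. An internal category $C$ is fibrant in $\ICat^S$ if and only if it is fibrant in $\ICat^{LW}$ and local with respect to the maps $\mathbb{L}SG(n)\to\mathbb{L}SF(n)$. By the transferred model structure, $C$ is fibrant in $\ICat^{LW}$ if and only if $NC$ is fibrant in $s\Sp_{proj}$, since fibrations in $\ICat^{LW}$ are exactly the maps sent to fibrations by $N$. For the locality condition, I would invoke proposition~\ref{prop-derived adjunction}: the derived mapping space $\R\Map_{\ICat}(\mathbb{L}SA,C)$ is naturally weakly equivalent to $\R\Map_{s\Sp}(A,\R N C)$, and since $N$ preserves all weak equivalences, $\R N C$ agrees with $NC$ up to weak equivalence. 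Hence $C$ is $\{\mathbb{L}SG(n)\to\mathbb{L}SF(n)\}$-local if and only if $NC$ is $\{G(n)\to F(n)\}$-local. Combining these two equivalences shows $C$ is fibrant in $\ICat^S$ exactly when $NC$ is fibrant in $\cat{SS}_{proj}$, as characterized in proposition~\ref{prop-Segal fibrant}.

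The case of $\ICat$ is entirely analogous, localizing further at the map $\mathbb{L}SNI[1]\to[0]$. Here one observes that a fibrant object $C$ of $\ICat$ is a fibrant object of $\ICat^S$ (so $NC$ is Segal fibrant by the previous paragraph) which is in addition local with respect to $\mathbb{L}SNI[1]\to[0]$. Applying proposition~\ref{prop-derived adjunction} once more, this locality is equivalent to $NC$ being local with respect to $NI[1]=E\to F(0)$ in $\cat{SS}_{proj}$, which by proposition~\ref{prop-Rezk fibrant} is precisely the condition that $NC$ be Rezk fibrant.

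The main obstacle, and the step requiring the most care, is the correct use of proposition~\ref{prop-derived adjunction} to translate locality conditions through the adjunction. One must be careful that the localizing sets on the $\ICat$ side are images of the localizing sets on the $s\Sp$ side under $\mathbb{L}S$, which is exactly how the Bousfield localizations were defined in sections~\ref{Icat Segal} and~\ref{Icat Rezk}, so that proposition~\ref{prop-Bousfield localization} applies and the $S$-local objects correspond precisely to objects weakly equivalent to $NZ$ for $Z$ local. The remaining subtlety is that $N$ preserves \emph{all} weak equivalences, so $\R N$ may be replaced by $N$ throughout (as noted in the Notations section), which is what allows the clean identification of $NC$ rather than a fibrant replacement of it.
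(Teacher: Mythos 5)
Your proof is correct and follows essentially the same route as the paper: characterize fibrant objects of the localization as base-fibrant plus local, identify base-fibrancy via the transferred model structure, and translate the locality condition through the adjunction $(S,N)$ using the fact that $N$ preserves all weak equivalences. The only cosmetic difference is that you invoke proposition~\ref{prop-derived adjunction} where the paper uses the simplicial adjunction isomorphism $\Map(SQF(n),C)\cong\Map(QF(n),NC)$ directly; these amount to the same computation.
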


\begin{proof}
We do the case of $\ICat^S$. The other case is entirely analogous.

Let $C$ be an internal category. Then $C$ is fibrant in $\ICat^S$ if and only if it is fibrant in $\ICat^{LW}$ and for each $n$ the map
\[\Map(SQF(n),C)\to\Map(SQG(n),C)\]
induced by the map $G(n)\to F(n)$ is a weak equivalence. Since $(S,N)$ is a Quillen adjunction, this is equivalent to asking for $NC$ to be projectively fibrant and the maps
\[\Map(QF(n),NC)\to\Map(QG(n),NC)\]
to be weak equivalences, which is equivalent to $NC$ being fibrant in $\cat{SS}_{proj}$.
\end{proof}

If we unwrap this proposition, using proposition~\ref{prop-Segal fibrant}, we see that an internal category $C$ is fibrant in $\ICat^S$ if it is fibrant in $\ICat^{LW}$ and the commutative squares
\[\xymatrix{
\Ar(C)\times_{\Ob(C)}N_n(C)\ar[r]\ar[d]&N_n(C)\ar[d]\\
\Ar(C)\ar[r]&\Ob(C)
}
\]
are homotopy cartesian. By construction, these square are strictly cartesian, but this does not imply that they are homotopy cartesian.

For $C$ an internal category whose nerve is Segal fibrant, we denote by $C_{hoequiv}$ the space $N(C)_{hoequiv}$. Using proposition~\ref{prop-Rezk fibrant}, we see that an internal category $C$ is fibrant in $\cat{ICat}$ if it is fibrant in $\ICat^S$ and the map 
\[\Ob(C)\to C_{hoequiv}\]
sending an object to the identity at that object is a weak equivalence.

We will use the terminology ``Segal fibrant'' (resp. ``Rezk fibrant'') internal category to refer to a fibrant object of $\ICat^S$ (resp. $\ICat$).

\subsection{The Dwyer-Kan equivalences.}

\begin{defi}
We say that a map $f:C\to D$ between two Segal fibrant internal categories is fully faithful (resp. essentially surjective, resp. a Dwyer-Kan equivalence) if $N(f)$ is fully faithful (resp. essentially surjective, resp. a Dwyer-Kan equivalence).
\end{defi}

We then have the following immediate proposition.

\begin{prop}\label{prop-Dwyer Kan equivalences are Rezk equivalences ICat}
A map $f:C\to D$ between Segal fibrant internal categories is a Rezk equivalence if and only if it is a Dwyer-Kan equivalence.
\end{prop}

\begin{proof}
This follows from proposition~\ref{prop-Dwyer Kan equivalences are Rezk equivalences} and the fact that a map is a Rezk equivalence if and only if its nerve is one.
\end{proof}

\subsection{The category of strongly Segal internal categories.}

In this subsection, we define a particularly nice class of objects of $\ICat$ that we call strongly Segal internal categories. In particular, we will see in the last section that the strongly Segal internal categories have well-behaved presheaf categories.

\begin{defi}
A strongly Segal internal category is an internal category $C$ such that $\Ob(C)$ is fibrant and such that the source and target maps $\Ar(C)\to\Ob(C)$ are fibrations in $\Sp$.
\end{defi}

\begin{prop}
A strongly Segal internal category is fibrant in $\ICat^S$
\end{prop}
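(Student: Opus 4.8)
The plan is to verify the explicit fibrancy criterion recorded just after proposition~\ref{prop-N preserves fibrant objects}: an internal category $C$ is fibrant in $\ICat^S$ precisely when it is fibrant in $\ICat^{LW}$ and, for every $n$, the square
\[
\xymatrix{
\Ar(C)\times_{\Ob(C)}N_n(C)\ar[r]\ar[d]&N_n(C)\ar[d]\\
\Ar(C)\ar[r]&\Ob(C)
}
\]
is homotopy cartesian. So there are two things to check.

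First I would show that $C$ is fibrant in $\ICat^{LW}$, i.e. that $N_n(C)$ is a Kan complex for every $n$, arguing by induction on $n$. For $n=0$ we have $N_0(C)=\Ob(C)$, which is fibrant by hypothesis, and $N_1(C)=\Ar(C)$ is fibrant since it sits over the fibrant object $\Ob(C)$ by the fibration $s$ (or $t$). For the inductive step, write $N_{n+1}(C)=\Ar(C)\times_{\Ob(C)}N_n(C)$, where the fiber product glues the target map $t\colon\Ar(C)\to\Ob(C)$ to the map $N_n(C)\to\Ob(C)$ selecting the source of the first arrow. Since $t$ is a fibration by hypothesis, its pullback $N_{n+1}(C)\to N_n(C)$ is again a fibration, and as $N_n(C)$ is fibrant by the inductive hypothesis, so is $N_{n+1}(C)$. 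Hence $NC$ is levelwise fibrant, i.e. fibrant in $s\Sp_{proj}$, so $C$ is fibrant in $\ICat^{LW}$.

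Second, for the homotopy cartesian condition I observe that the square above is strictly cartesian by the very definition of $N_{n+1}(C)$, and that its bottom leg $\Ar(C)\to\Ob(C)$ is the target map, hence a fibration by hypothesis. By right properness of $\Sp$ — concretely, by the criterion in the remark following the definition of a homotopy cartesian square, applied with the trivial factorization of the already-fibrant map (after transposing the square so the fibration occupies the position of $p$, which does not change whether the square is homotopy cartesian) — a strictly cartesian square one of whose legs into the bottom-right corner is a fibration is automatically homotopy cartesian. Thus each of these squares is homotopy cartesian.

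Combining the two points with the criterion of proposition~\ref{prop-N preserves fibrant objects}, unwound through proposition~\ref{prop-Segal fibrant}, yields that $C$ is fibrant in $\ICat^S$. I do not expect a serious obstacle: the real content lies in the earlier lemmas, and the only points needing care are the bookkeeping of which of $s,t$ is used in each fiber product, and confirming that the right-properness criterion applies with no fibrancy assumption on the base (it does, since $\Sp$ is right proper).
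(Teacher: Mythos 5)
Your proof is correct and follows essentially the same route as the paper: an induction on $n$ establishing that the nerve is levelwise fibrant, followed by the observation that the Segal squares are strict pullbacks with a fibration as one leg and hence homotopy cartesian by right properness, concluding via proposition~\ref{prop-Segal fibrant}. The only cosmetic difference is that the paper's induction proves the stronger statement that both composites $s,t\colon N_nC\to N_0C$ are fibrations and reuses this for the homotopy cartesian step, whereas you invoke the hypothesis that $\Ar(C)\to\Ob(C)$ is a fibration directly (after transposing the square), which is equally valid.
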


\begin{proof}
Let $C$ be a strongly Segal internal category. By proposition~\ref{prop-N preserves fibrant objects}, it suffices to check that $NC$ is fibrant in $\cat{SS}_{proj}$. 

We write $N_nC$ for the degree $n$ space of $NC$. We denote by $s$ the map $N_nC\to N_0C$ which is the composite of the leftmost projection $N_nC\to N_1C=\Ar(C)$ with the source $\Ar(C)\to \Ob(C)=N_0(C)$. Similarly $t:N_nC\to N_0C$ is the rightmost projection composed with the target $Ar(C)\to \Ob(C)$.

Let us prove that the maps $s$ and $t$ from $N_nC$ to $N_0C$ are fibrations. We do it by induction on $n$. This is by assumption true for $n=1$. Let $n$ be an integer. We have a commutative diagram in which the square is cartesian
\[\xymatrix{
N_nC\ar[d]_p\ar[r]_-q&N_{n-1}C\ar[d]_s\ar[r]_t&N_0C\\
N_1C\ar[d]_s\ar[r]_t&N_0C& \\
N_0C& & }
\]

By the induction hypothesis all maps but possibly $p$ and $q$ are fibrations. Since the square is cartesian, $p$ and $q$ must be fibrations as well. Therefore, since $N_0C$ is fibrant, $NC$ is levelwise fibrant. We also see from this diagram that the square
\[\xymatrix{
N_nC\ar[d]_p\ar[r]_-q&N_{n-1}C\ar[d]_s\\
N_1C\ar[r]_t&N_0C
}
\]
is homotopy cartesian. This implies by proposition~\ref{prop-Segal fibrant} that $NC$ is fibrant in $\cat{SS}_{proj}$.
\end{proof}

\section{Categories of internal functors}

In this section, we study the category of internal presheaves  over a strongly Segal internal category $C$. We put a model structure on this category that reduces to the projective model structure in the case where $C$ is a simplicial category.

\subsection{The over category model structure.}

The overcategory model structure on $\Sp_{/P}$ is the category whose cofibrations, fibrations and weak equivalences are the maps that are sent to a cofibration, fibration or weak equivalence by the forgetful functor $\Sp_{/P}\to \Sp$. A set of generating (trivial) cofibrations is obtained by taking all the commutative triangles
\[
\xymatrix{
K\ar[rd]\ar[rr]& & L\ar[dl]\\
 & P&
}
\]
with $K\to L$ a generating (trivial) cofibration in $\Sp$.

Let $f:(X,p_X)\to (Y,p_Y)$ be a map in $\Sp_{/P}$. We say that $f$ is a weak equivalence over $p\in P$ if the induced map $\on{hofiber}_pX\to \on{hofiber}_pY$ is a weak equivalence in $\Sp$.

\begin{lemm}
Let $P$ be a fibrant space and $p$ be a point of $P$. If a map $f:(X,p_X)\to (Y,p_Y)$ is a weak equivalence over $p$, it is a weak equivalence over any point in the path component of $P$.
\end{lemm}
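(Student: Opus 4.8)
The plan is to reduce the statement, via right properness of $\Sp$, to the elementary fact that the strict fiber of a Kan fibration over either endpoint of $\Delta[1]$ includes into the total space by a weak equivalence. The whole argument is a transport of the given fiberwise equivalence along an edge of $P$ connecting $p$ to $p'$.

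First I would replace the structure maps to $P$ by fibrations, compatibly with $f$. Factor $p_Y$ as $Y\xrightarrow{\sim}\tilde Y\xrightarrow{\pi_Y}P$ with $\pi_Y$ a fibration, and then factor the induced map $X\to\tilde Y$ as $X\xrightarrow{\sim}\tilde X\xrightarrow{\tilde f}\tilde Y$ with $\tilde f$ a fibration. The composite $\tilde X\to\tilde Y\to P$ is then a fibration, and I obtain a map $\tilde f:\tilde X\to\tilde Y$ lying over $P$, together with weak equivalences $X\to\tilde X$ and $Y\to\tilde Y$ over $P$. Since $\tilde X\to P$ and $\tilde Y\to P$ are fibrations, the homotopy pullback of $\tilde X\to P\leftarrow *$ along any point $q$ is computed by the strict fiber $\tilde X_q$, and the weak equivalence $X\to\tilde X$ together with the fact that the homotopy pullback functor preserves weak equivalences shows $\on{hofiber}_q X\simeq\tilde X_q$ (likewise $\on{hofiber}_q Y\simeq\tilde Y_q$), the induced map on homotopy fibers being modeled by $\tilde f_q:\tilde X_q\to\tilde Y_q$. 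Hence $f$ is a weak equivalence over $q$ if and only if $\tilde f_q$ is a weak equivalence of spaces.

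Next, since $P$ is fibrant and $p,p'$ lie in the same path component, I choose a single edge $e:\Delta[1]\to P$ with $e(0)=p$ and $e(1)=p'$. Pulling $\tilde X$ and $\tilde Y$ back along $e$ produces fibrations $X_e\to\Delta[1]$ and $Y_e\to\Delta[1]$ and a map $f_e:X_e\to Y_e$ over $\Delta[1]$ whose fibers over $0$ and $1$ are exactly $\tilde f_p$ and $\tilde f_{p'}$. The vertex inclusions $\{0\}\hookrightarrow\Delta[1]$ and $\{1\}\hookrightarrow\Delta[1]$ are trivial cofibrations, so pulling them back along the fibrations $X_e\to\Delta[1]$ and $Y_e\to\Delta[1]$ and invoking right properness of $\Sp$, the four fiber inclusions $\tilde X_p\to X_e$, $\tilde X_{p'}\to X_e$, $\tilde Y_p\to Y_e$, $\tilde Y_{p'}\to Y_e$ are all weak equivalences. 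These assemble into two commutative squares relating $\tilde f_p$, $f_e$ and $\tilde f_{p'}$ in which the horizontal maps are weak equivalences, so two applications of the two-out-of-three property yield that $\tilde f_p$ is a weak equivalence $\iff f_e$ is $\iff\tilde f_{p'}$ is. Combined with the first step this gives the claim.

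The only genuine subtlety is the first step: one must produce $\tilde X$ and $\tilde Y$ simultaneously over $P$ and compatibly with $f$, so that a single fibration $\tilde f$ induces the correct map on \emph{all} homotopy fibers at once, rather than choosing unrelated replacements fiber by fiber. Once this is arranged, the remainder is a purely formal transport over the base $\Delta[1]$ using right properness, and the direction of the edge $e$ is immaterial since the argument applies symmetrically to both endpoints.
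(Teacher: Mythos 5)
Your proof is correct and follows essentially the same route as the paper's: reduce to the case where the structure maps are fibrations (so strict fibers compute homotopy fibers), then transport along a single edge $e:\Delta[1]\to P$ by pulling back, using right properness of $\Sp$ and the two-out-of-three property. The only difference is cosmetic: the paper disposes of the first step by invoking a (functorial) fibrant replacement in $\Sp_{/P}$, whereas you build the compatible replacement $\tilde f:\tilde X\to\tilde Y$ explicitly via two factorizations.
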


\begin{proof}
Since homotopy fibers are invariant under weak equivalences of spaces over $P$, we can assume without loss of generality that $X$ and $Y$ are fibrant in $\Sp_{/P}$, i.e. that the structure maps $p_X$ and $p_Y$ are fibrations.

Let $q$ be a point in $P$ in the path component of $p$. Let $u$ be a $1$-simplex of $P$ whose faces are $p$ and $q$. Then, we can consider the following commutative diagram
\[
\xymatrix{
\Delta[0]\ar[d]_p\ar[r]&\Delta[1]\ar[d]_u&\Delta[0]\ar[d]^q\ar[l]\\
P\ar[r]&P&P\ar[l]
}
\]

For a space $K\to P$ in $\Sp_{/P}$, we denote by $K_p$ (resp. $K_q$, resp. $K_u$) the fiber product $K\times_{P}\Delta[0]$ taken along $p:\Delta[0]\to P$ (resp. $K\times_P\Delta[0]$ taken along $q$, resp. $K\times_P\Delta[1]$ taken along $u$). 

These three constructions are functorial in $K$. In particular, we have a commutative diagram
\[
\xymatrix{
X_p\ar[r]\ar[d]_{f_p}&X_u\ar[d]_{f_u}&X_q\ar[l]\ar[d]_{f_q}\\
Y_p\ar[r]&Y_u&Y_q\ar[l]
}
\]
All the horizontal maps in this diagram are weak equivalences because of our fibrancy assumption on $X$ and $Y$ and the right properness of $\Sp$. By assumption, the map $f_p$ is a weak equivalence. By the two out of three property, this implies that $f_q$ is a weak equivalence.
\end{proof}

We say that a map $X\to Y$ in $\Sp_{/P}$ is a weak equivalence over a path component of $P$ if it is a weak equivalence over one point in that path component. The previous lemma tells us that it is equivalent to $f$ being a weak equivalence over all the points of that path component.

\begin{prop}\label{prop-weak equivalences over a fixed space}
Let $P$ be a fibrant space. Then a map $f:(X,p_X)\to (Y,p_Y)$ is a weak equivalence in $\Sp_{/P}$ if and only if it is a weak equivalence over each path component of $P$. 
\end{prop}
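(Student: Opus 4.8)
The plan is to reduce the statement to a single application of proposition~\ref{prop-characterization homotopy cartesian} by means of one comparison square. First I would record that, by definition of the overcategory model structure, $f$ is a weak equivalence in $\Sp_{/P}$ if and only if the underlying map $X\to Y$ is a weak equivalence in $\Sp$. Since homotopy fibers are invariant under weak equivalences over $P$, and whether $X\to Y$ is a weak equivalence is likewise unchanged by replacing $X$ and $Y$ by weakly equivalent objects over $P$, I may assume that $p_X$ and $p_Y$ are fibrations. As $P$ is fibrant, this forces $X$ and $Y$ to be fibrant spaces, so that every corner of the square
\[
\xymatrix{
X\ar[r]^f\ar[d]_{p_X}& Y\ar[d]^{p_Y}\\
P\ar[r]^{=}& P
}
\]
is fibrant and $\on{hofiber}_p p_X$, $\on{hofiber}_p p_Y$ agree with the strict fibers.

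For the backward direction, I would argue as follows. By the previous lemma, the hypothesis that $f$ is a weak equivalence over each path component of $P$ is equivalent to the assertion that for \emph{every} point $p$ of $P$ the induced map $\on{hofiber}_p p_X\to\on{hofiber}_p p_Y$ is a weak equivalence. This is precisely the fiberwise condition appearing in proposition~\ref{prop-characterization homotopy cartesian}, so that proposition shows the square above is homotopy cartesian. Since the bottom horizontal map is the identity, the homotopy pullback of the right-hand and bottom maps is canonically identified with $Y$, and the square is homotopy cartesian exactly when the comparison map $X\to Y$ is a weak equivalence; hence $f$ is a weak equivalence.

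For the forward direction, if $X\to Y$ is a weak equivalence then for each point $p$ it induces a levelwise weak equivalence of spans from $X\to P\leftarrow *$ to $Y\to P\leftarrow *$, and since the homotopy pullback functor preserves weak equivalences we conclude that $\on{hofiber}_p X\to\on{hofiber}_p Y$ is a weak equivalence for every $p$; in particular $f$ is a weak equivalence over each path component. The only delicate point is the fibrancy bookkeeping required to invoke proposition~\ref{prop-characterization homotopy cartesian}, together with the translation, furnished by the previous lemma, of the per-component hypothesis into a statement about every point of $P$; once these are in place the argument is formal.
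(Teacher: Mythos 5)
Your proof is correct and follows essentially the same route as the paper: upgrade the per-component hypothesis to a per-point statement via the preceding lemma, then apply proposition~\ref{prop-characterization homotopy cartesian} to the square over $\id_P$ to conclude that $f$ is a weak equivalence. The only difference is that you spell out the fibrant-replacement bookkeeping needed to invoke that proposition, which the paper leaves implicit.
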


\begin{proof}
Clearly, if $f$ is a weak equivalence it is a weak equivalence over each point of $P$. 

Conversely, if $f$ is a weak equivalence over each path component of $P$, according to the previous lemma, it is a weak equivalence over each point of $P$. Thus, by proposition~\ref{prop-characterization homotopy cartesian}, the following square is homotopy cartesian.
\[\xymatrix{
X\ar[r]^f\ar[d]_{p_X}& Y\ar[d]^{p_Y}\\
P\ar[r]_{\id_P}&P
}
\]
Since the bottom horizontal map is a fibration, this implies that $f$ is a weak equivalence.
\end{proof}

\subsection{Internal functors.}

Let $P$ be a space. There is a functor
\[\Sp_{/P}\times\Sp_{/P\times P}\to \Sp_{/P}\]
sending the pair $(X\xrightarrow{p} P, M\goto{(s,t)} P\times P)$ to the fiber product $X\times_PM$ taken along $p$ and $s$ equipped with the map $X\times_PM\to P$ induced by $t$. 

It is easy to verify that this functor makes $\Sp_{/P}$ into a category right tensored over the monoidal category $\Sp_{/P\times P}$. Hence, we can talk about a right module in $\Sp_{/P}$ over a monoid in $\Sp_{/P\times P}$ i.e. an internal category with space of objects $P$.

\begin{defi}
Let $C$ be an internal category. The category $\Sp^C$ is the category of right $\Ar(C)$-modules in $\Sp_{/\Ob(C)}$.
\end{defi}

More explicitly, an object of $\Sp^C$ is a space $F$ equipped with a map $F\to \Ob(C)$ together with an action map $F\times_{\Ob(C)} \Ar(C)\to F$ which is associative and unital. 

If $F$ is an object of $\Sp^C$, and $c$ is a point of $\Ob(C)$, then the fiber $F_c$ of $F$ over $c$ should be interpreted as the value of the functor $F$ at $c$. We note that if $u$ is a point in $\Ar(C)$ whose source is $c$ and target is $d$, we get a map $F_c\to F_d$. In particular, it is straightforward to check that if $C$ has a discrete space of objects (that is if $C$ is a simplicially enriched category), the category $\Sp^C$ is equivalent to the category of simplicial functors $C\to \Sp$.

\begin{prop}
Let $C$ be an internal category with space of objects $P$ for which the source map is a fibration. There is a simplicial model structure on $\Sp^C$ whose fibrations and weak equivalences are the maps that are sent to a fibration or weak equivalence by the forgetful functor $U:\Sp^C\to \Sp_{/P}$. Moreover, the forgetful functor $\Sp^C\to \Sp_{/P}$ preserves cofibrations.
\end{prop}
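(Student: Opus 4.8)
The plan is to obtain the model structure by transfer (Theorem~\ref{theo-transferred model structure}) along the free--forgetful adjunction $\on{Fr}:\Sp_{/\Ob(C)}\leftrightarrows\Sp^C:U$, where $U$ is the forgetful functor and $\on{Fr}(X)=X\times_{\Ob(C)}\Ar(C)$ is the free right $\Ar(C)$-module (the fiber product taken along the structure map of $X$ and the source map $s$, with structure map to $\Ob(C)$ induced by the target $t$). The starting point is the overcategory model structure on $\Sp_{/\Ob(C)}$, with generating (trivial) cofibrations the sets $I$ (resp. $J$) of the previous subsection. The key structural observation I would record first is that the underlying functor of the free module monad, $X\mapsto X\times_{\Ob(C)}\Ar(C)$, factors as $t_!\circ s^*$, where $s^*$ is pullback along $s$ and $t_!$ is postcomposition with $t$; since $\Sp$ is a topos, both $s^*$ and $t_!$ are left adjoints, so this functor preserves all colimits.

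First I would verify the two hypotheses of Theorem~\ref{theo-transferred model structure}. That $U$ preserves filtered colimits is immediate: $U$ is monadic with associated monad $X\mapsto X\times_{\Ob(C)}\Ar(C)$, which preserves all colimits by the observation above, so $U$ even creates filtered colimits. For the second hypothesis I would prove a sub-lemma: if $g:K\to L$ is a map in $\Sp_{/\Ob(C)}$ and
\[\xymatrix{\on{Fr}(K)\ar[r]\ar[d]_{\on{Fr}(g)}&N\ar[d]\\\on{Fr}(L)\ar[r]&N'}\]
is a pushout in $\Sp^C$, then $U$ carries it to a pushout in $\Sp_{/\Ob(C)}$, so that $UN\to UN'$ is the pushout of $U\on{Fr}(g)=g\times_{\Ob(C)}\Ar(C)$ along the underlying map of $\on{Fr}(K)\to N$. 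This is exactly where colimit preservation of $-\times_{\Ob(C)}\Ar(C)$ is used: one equips the pushout $UN'$ formed in $\Sp_{/\Ob(C)}$ with the evident $\Ar(C)$-action (the action map is defined because $-\times_{\Ob(C)}\Ar(C)$ commutes with this pushout) and checks that the result satisfies the universal property of the module pushout.

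Granting the sub-lemma, the second hypothesis reduces to showing that $g\times_{\Ob(C)}\Ar(C)$ is a cofibration (resp. trivial cofibration) in $\Sp_{/\Ob(C)}$ whenever $g\in I$ (resp. $g\in J$), since cofibrations and trivial cofibrations are stable under pushout in the model category $\Sp_{/\Ob(C)}$. For $g\in I$ the map $g$ is a monomorphism and $g\times_{\Ob(C)}\Ar(C)$ is again a monomorphism, because pullback preserves monomorphisms. The delicate case, which I expect to be the main obstacle, is $g\in J$: here $g$ is a monomorphism that is a weak equivalence of underlying simplicial sets, and I must show $g\times_{\Ob(C)}\Ar(C)$ remains a weak equivalence. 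This is where the hypothesis that the source map $s$ is a fibration is essential. I would consider the cartesian square
\[\xymatrix{K\times_{\Ob(C)}\Ar(C)\ar[r]\ar[d]&L\times_{\Ob(C)}\Ar(C)\ar[d]\\K\ar[r]_g&L}\]
whose right-hand vertical map is the pullback of $s$ along $L\to\Ob(C)$, hence a fibration; since $g$ is a weak equivalence, right properness of $\Sp$ forces the top map to be a weak equivalence, as needed.

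Finally, the overcategory model structure on $\Sp_{/\Ob(C)}$ is simplicial and the adjunction $(\on{Fr},U)$ is a simplicial adjunction (the tensor of a module $M$ by a simplicial set $T$ being $M\times T$ over $\Ob(C)$ with the induced action, which $U$ preserves), so Proposition~\ref{prop-transferred is simplicial} shows the transferred model structure is simplicial. The claim that $U$ preserves cofibrations is part of the conclusion of Theorem~\ref{theo-transferred model structure}. The two steps that require genuine work are the pushout sub-lemma of the second paragraph and the right-properness argument of the third; everything else is formal.
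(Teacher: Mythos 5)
Your proposal is correct and follows essentially the same route as the paper: transfer along the free--forgetful adjunction $X\mapsto X\times_P\Ar(C)$ via Theorem~\ref{theo-transferred model structure}, using that $U$ creates colimits (so pushouts of free maps are computed in $\Sp_{/P}$), that $-\times_P\Ar(C)$ preserves monomorphisms, that right properness of $\Sp$ together with the fibration hypothesis on the source map handles the trivial cofibrations, and Proposition~\ref{prop-transferred is simplicial} for simpliciality. The paper's proof is just a terser version of the same argument, leaving implicit the colimit-creation and right-properness points you spell out.
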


\begin{proof}
The left adjoint to the forgetful functor $U:\Sp^C\to \Sp_{/P}$ sends $Y\to P$ to $Y\times_P \Ar(C)$. We apply theorem~\ref{theo-transferred model structure}. Let $K\to L$ be a cofibration in $\Sp_{/P}$, then $K\times_P\Ar(C)\to L\times_P \Ar(C)$ is a cofibration in $\Sp_{/P}$. Indeed, it suffices to check that the underlying map in $\Sp$ is a monomorphism which is trivial. If $K\to L$ is a trivial cofibration then $K\times_P\Ar(C)\to L\times_P \Ar(C)$ is a cofibration by what we have just said and is a weak equivalence because the source map $\Ar(C)\to P$ is a fibration. Since the functor $U$ preserves all colimits, we are done.

The fact that $\cat{S}^C$ is a simplicial model category follows from proposition~\ref{prop-transferred is simplicial} and the fact that the adjunction $\Sp_{/P}\leftrightarrows\Sp^C$ is simplicial.
\end{proof}

\begin{rem}                    
Note that if $C$ is a simplicial category, then $\Sp^C$ coincides with the category of simplicial functors $C\to \Sp$. Moreover, if $C$ is fibrant as a simplicial category, then the source and target maps $\Ar(C)\to P$ are fibrations and the model structure we get on $\Sp^C$ is exactly the projective model structure.
\end{rem}

\subsection{A cofibrant replacement functor on $\Sp^C$.}

In the following, we shall always write $P$ for $\Ob(C)$. For $F$ an object of $\Sp^C$, we have the bar resolution
\[F\times_P\Ar(C)\leftleftarrows F\times_P\Ar(C)\times_P\Ar(C) \mathrel{\substack{\textstyle\leftarrow\\[-0.6ex]
                      \textstyle\leftarrow \\[-0.6ex]
                      \textstyle\leftarrow}} F\times_P\Ar(C)\times_P\Ar(C)\times_P\Ar(C)\ldots\]
induced by the adjunction $\Sp_{/P}\leftrightarrows \Sp^C$.

We will prove that its realization is a cofibrant replacement in $\Sp^C$. Before doing so, we recall a little bit of terminology about simplicial objects. We denote by $\Delta_s$ the category whose objects are the same as the objects of $\Delta$ but where we only keep the maps that are compositions of degeneracies. Let $\mathbb{N}$ denote the poset of nonnegative integers. There is a functor $\Delta_s\op\to\mathbb{N}$ sending the object $[n]$ to $n$.  This makes $\Delta_s\op$ into a direct category (see~\cite[Definition 5.1.1.]{hoveymodel}).

Let $\cat{M}$ be a model category and $X:\Delta_s\op\to\cat{M}$ be a functor. For any nonnegative integer $r$, we define as in~\cite[Definition 5.1.2.]{hoveymodel} the $r$-th latching object of $X$ denoted $L_rX$ by the colimit
\[L_rX=\on{colim}_{[n]\to [r], n\neq r}X_n\]

Note that this definition extends the definition of the $r$-th latching object of a simplicial object. More precisely, the $r$-th latching object of a simplicial object can be computed as the $r$-th latching object of its restriction along the inclusion $\Delta_s\op\to\Delta\op$.

We say that an object $X$ of $\on{Fun}(\Delta_s\op,\cat{M})$ is Reedy cofibrant if for each $r\geq 0$, the map $L_rX\to X_r$ is a cofibration. According to our previous observation, a simplicial diagram in $\cat{M}$ is Reedy cofibrant if and only if its restriction to $\Delta_s\op$ is Reedy cofibrant.
                     
\begin{prop}\label{prop-bar is cofibrant}
The bar resolution is Reedy cofibrant in $\Sp^C$
\end{prop}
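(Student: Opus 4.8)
The plan is to exploit the free structure of the bar resolution together with the fact that the model structure on $\Sp^C$ is transferred along the forgetful functor $U\colon\Sp^C\to\Sp_{/P}$, whose left adjoint $\Phi$ is given by $\Phi(Y)=Y\times_P\Ar(C)$. Since the generating cofibrations of $\Sp^C$ are the images under $\Phi$ of the generating cofibrations of $\Sp_{/P}$, and $\Phi$ preserves all colimits, the functor $\Phi$ sends cofibrations to cofibrations. Recall that, with respect to the direct category $\Delta_s\op$, Reedy cofibrancy of the bar resolution $B_\bullet$ means exactly that each latching map $L_rB\to B_r$ is a cofibration in $\Sp^C$. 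The idea is to write $B_\bullet$, as a diagram on the degeneracy category $\Delta_s\op$, as $\Phi$ applied to a simplicial object $\bar B_\bullet$ in $\Sp_{/P}$, and then to reduce the whole statement to the Reedy cofibrancy of $\bar B_\bullet$ in $\Sp_{/P}$.

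First I would make the free structure explicit. Writing $\bar B_n=UF\times_P\Ar(C)^{\times_P n}$, there is a natural identification $B_n\cong\Phi(\bar B_n)$ exhibiting $B_n$ as the free module on $\bar B_n$, the last $\Ar(C)$ factor being the free one. The bar resolution is the simplicial object associated to the comonad $\Phi U$, so its degeneracy operators are $s_i=(\Phi U)^i\,\delta\,(\Phi U)^{\,n-i}$ with $\delta=\Phi\,\eta\,U$ the comultiplication, $\eta$ being the unit of the adjunction. A direct manipulation of these formulas shows that each $s_i$ equals $\Phi(\bar s_i)$ for a map $\bar s_i\colon\bar B_n\to\bar B_{n+1}$; concretely, $\bar s_i$ inserts a degenerate (identity) arrow, via the unit $\Ob(C)\to\Ar(C)$, into one of the non-free factors, leaving the last factor untouched. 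Thus the restriction of $B_\bullet$ to $\Delta_s\op$ is $\Phi$ applied to the simplicial object $\bar B_\bullet$, and since $\Phi$ preserves colimits we obtain $L_rB\cong\Phi(L_r\bar B)$ with latching map $\Phi(L_r\bar B\to\bar B_r)$. Because $\Phi$ preserves cofibrations, it then suffices to prove that $\bar B_\bullet$ is Reedy cofibrant in $\Sp_{/P}$.

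This last point is soft. A cofibration in $\Sp_{/P}$ is precisely a map whose underlying map of simplicial sets is a monomorphism, and both the forgetful functor $\Sp_{/P}\to\Sp$ and evaluation at a simplicial degree $k$ preserve the colimits defining latching objects. Hence $(L_r\bar B)_k\cong L_r(\bar B_{\bullet,k})$, the latching object of the simplicial \emph{set} $n\mapsto(\bar B_n)_k$ obtained by evaluating $\bar B_\bullet$ at simplicial degree $k$. By the Eilenberg--Zilber lemma the latching map of any simplicial set is injective, since the degenerate simplices form a subset; therefore $L_r\bar B\to\bar B_r$ is a levelwise monomorphism of simplicial sets, i.e.\ a cofibration in $\Sp_{/P}$.

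Applying $\Phi$ then shows $L_rB\to B_r$ is a cofibration in $\Sp^C$ for every $r$, which is the desired Reedy cofibrancy. The main obstacle is the structural step of the second paragraph: verifying that the degeneracy operators of the bar resolution genuinely lie in the image of the free functor $\Phi$, so that the latching colimit commutes with $\Phi$. Once this is secured, the Reedy cofibrancy of $\bar B_\bullet$ is a formal consequence of the Eilenberg--Zilber lemma applied degreewise.
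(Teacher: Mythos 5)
Your overall strategy coincides with the paper's own proof in its two main reductions: you restrict to the direct category $\Delta_s\op$, where Reedy cofibrancy only involves the degeneracies, and you observe that those degeneracies are free, i.e.\ lie in the image of $\Phi=-\times_P\Ar(C)$, so that (since $\Phi$ preserves colimits and cofibrations) everything reduces to Reedy cofibrancy of the stripped diagram $\bar B_\bullet$ in $\Sp_{/P}$. The paper finishes with one more reduction of the same kind, through the cofibration- and colimit-preserving functor $-\times_PF\colon\Sp_{/P\times P}\to\Sp_{/P}$, so as to land on the restriction of the nerve of $C$ and then quote Hirschhorn's theorem that every simplicial space is Reedy cofibrant; you instead finish by running the Eilenberg--Zilber argument degreewise on $\bar B_\bullet$ itself. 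These two finishes have essentially the same mathematical content.

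However, your last step has a genuine gap as written. The Eilenberg--Zilber lemma is a statement about simplicial sets, and it really uses the face maps: for a bare diagram $X\colon\Delta_s\op\to\cat{Set}$ the latching map need not be injective. For instance, take $X_0=\{a,b\}$ and $X_n=\{c\}$ for all $n\geq 1$, with all structure maps the unique ones; the degeneracy identities hold trivially, yet $L_1X=X_0\to X_1$ is not injective (in a genuine simplicial set the relations $d_is_i=\id$ force the degeneracies to be injective, and the face maps are also what prevents the latching colimit from over-identifying). Your $\bar B_\bullet$ is produced from the comonad formulas only as a $\Delta_s\op$-diagram --- you construct the maps $\bar s_i$ but no face maps --- so calling $n\mapsto(\bar B_n)_k$ a ``simplicial set'' and invoking Eilenberg--Zilber is not yet justified. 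The gap is fixable in a few lines: $\bar B_\bullet$ is the restriction to $\Delta_s\op$ of the two-sided bar construction $B_\bullet(F,C,P)$, where $P=\Ob(C)$ carries its trivial left module structure (concretely, in each simplicial degree $k$ this is the nerve of the category of elements of $F_k$ over the category $C_k$), and this object does have face maps, so Eilenberg--Zilber applies degreewise. Note that the paper's extra reduction through $-\times_PF$ serves precisely this purpose: it lands on the nerve of $C$, which is visibly the restriction of a simplicial object, so no such verification is needed there.
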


\begin{proof}
As explained above, it is enough to prove that the restriction of the bar construction to $\Delta_s\op$
\begin{equation}\label{first diagram}
F\times_P\Ar(C)\rightarrow F\times_P\Ar(C)\times_P\Ar(C)\rightrightarrows F\times_P\Ar(C)\times_P\Ar(C)\times_P\Ar(C)\ldots
\end{equation}
is Reedy cofibrant. 

The functor $Y\mapsto Y\times_P\Ar(C)$ is a left Quillen functor from $\Sp_{/P}$ to $\Sp^C$. In particular, it preserves cofibrations and colimits. This immediately imply that the induced functor
\[-\times_P\Ar(C):\on{Fun}(\Delta_s\op,\Sp_{/P})\to\on{Fun}(\Delta_s\op,\Sp^C)\]
preserves Reedy cofibrant objects. The diagram (\ref{first diagram}) arises as $-\times_P\Ar(C)$ applied to the following diagram $\Delta_s\op\to \Sp_{/P}$:
\begin{equation}\label{second diagram}
F\rightarrow \Ar(C)\times_PF\rightrightarrows \Ar(C)\times_P\Ar(C)\times_PF\ldots
\end{equation}
Therefore, it suffices to prove that the latter diagram is Reedy cofibrant. 

The functor $X\mapsto X\times_PF$ from $\Sp_{/P\times P}$ to $\Sp_{/P}$ preserves cofibrations and colimits (note that it is not a left Quillen functor since it does not preserve weak equivalences in general). Therefore, the induced functor
\[-\times_PF:\on{Fun}(\Delta_s\op,\Sp_{/P\times P})\to\on{Fun}(\Delta_s\op,\Sp_{/P})\]
preserves Reedy cofibrant objects. In particular, we see that the diagram (\ref{second diagram}) is Reedy cofibrant if the diagram
\[P\to \Ar(C)\rightrightarrows \Ar(C)\times_P\Ar(C)\ldots\]
is Reedy cofibrant in $\Sp_{/P}$. Since colimits in $\Sp_{/P}$ are created by the forgetful functor $\Sp_{/P}\to \Sp$, it suffices to check that the underlying diagram $\Delta_s\op\to\Sp$ is Reedy cofibrant. But this diagram is really the restriction along $\Delta_s\op\to\Delta\op$ of the nerve of $C$. Therefore, it suffices to check that the nerve of $C$ is Reedy cofibrant but this last fact is true since by~\cite[Theorem 15.8.7.]{hirschhornmodel}, any simplicial space is Reedy cofibrant.
\end{proof}

The bar resolution is augmented over the constant simplicial object with value $F$ via the structure map $F\times_P\Ar(C)\to F$. This induces a map from the realization of the bar resolution to $F$. 

\begin{prop}\label{prop-cofibrant replacement}
The map from the realization of the bar resolution to $F$ is a weak equivalence from a cofibrant object of $\Sp^C$.
\end{prop}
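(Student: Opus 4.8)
Write $B_\bullet$ for the bar resolution, so that $B_n=F\times_P\Ar(C)\times_P\cdots\times_P\Ar(C)$ with $n+1$ copies of $\Ar(C)$, and let $|B_\bullet|$ denote its realization. The plan is to prove the two assertions separately: that $|B_\bullet|$ is cofibrant in $\Sp^C$, and that the augmentation $|B_\bullet|\to F$ is a weak equivalence.

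For cofibrancy I would invoke proposition~\ref{prop-bar is cofibrant}, which states that $B_\bullet$ is Reedy cofibrant. Since $\Sp^C$ is a simplicial model category, the realization functor $|-|\colon\on{Fun}(\Delta\op,\Sp^C)\to\Sp^C$, given by the coend $\int^{[n]}B_n\otimes\Delta[n]$ for the tensoring of $\Sp^C$ over $\Sp$, is left Quillen when the source carries the Reedy model structure, hence sends Reedy cofibrant objects to cofibrant objects. Concretely, one filters $|B_\bullet|$ by its skeleta and checks that each inclusion $\on{sk}_{n-1}|B_\bullet|\to\on{sk}_n|B_\bullet|$ is a pushout of the corner map $L_nB\otimes\Delta[n]\cup_{L_nB\otimes\partial\Delta[n]}B_n\otimes\partial\Delta[n]\to B_n\otimes\Delta[n]$, which is a cofibration by the pushout-product axiom together with the Reedy cofibrancy of $B_\bullet$. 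As the initial skeleton is cofibrant, so is the colimit $|B_\bullet|$.

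For the weak equivalence, recall that the weak equivalences of $\Sp^C$ are created by the forgetful functor $U\colon\Sp^C\to\Sp_{/P}$. This functor preserves all colimits, and it preserves tensors by simplicial sets, both of these being computed as products with the simplicial set on underlying spaces; hence $U$ commutes with realization and $U|B_\bullet|\cong|UB_\bullet|$. The claim thus reduces to showing that the augmented simplicial object $UB_\bullet\to UF$ realizes to a weak equivalence in $\Sp_{/P}$. Now $B_\bullet$ is precisely the canonical resolution attached to the comonad $LU$ of the adjunction $L\colon\Sp_{/P}\leftrightarrows\Sp^C\colon U$, where $L(Y)=Y\times_P\Ar(C)$. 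For any such resolution, applying $U$ yields an augmented simplicial object equipped with an extra degeneracy arising from the unit $\id\to UL$ of the adjunction; concretely this extra degeneracy is the map inserting the unit arrow $\Ob(C)\to\Ar(C)$ of the internal category into the last factor of $B_n$. It exhibits $UB_\bullet\to UF$ as a split augmented simplicial object, and a split augmented simplicial object always realizes to a simplicial homotopy equivalence. Since this already holds at the level of $\Sp$ and weak equivalences in $\Sp_{/P}$ are detected on underlying spaces, the augmentation is a weak equivalence, and therefore $|B_\bullet|\to F$ is a weak equivalence in $\Sp^C$.

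The point requiring care is that the two inputs naturally live in different categories. The extra degeneracy is \emph{not} a morphism of $\Ar(C)$-modules---it does not respect the right action---so the contracting homotopy only exists after applying $U$, and the weak-equivalence half of the argument must be carried out in $\Sp_{/P}$ rather than in $\Sp^C$. Cofibrancy, by contrast, is genuinely a statement about $\Sp^C$ and is invisible after forgetting the module structure. The bridge between the two halves is the fact that $U$ commutes with realization, which is why its preservation of colimits and the simplicial enrichment of the adjunction $(L,U)$ are the essential structural inputs.
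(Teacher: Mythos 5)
Your proof is correct and takes essentially the same approach as the paper: cofibrancy from proposition~\ref{prop-bar is cofibrant} together with the fact that realization is left Quillen for the Reedy structure, and the weak equivalence from an extra degeneracy on the augmented bar resolution. The only difference is cosmetic: you take the splitting coming from the adjunction unit (inserting the identity into the last factor, which, as you rightly note, is not a module map and forces the contraction to live in $\Sp_{/P}$ after applying $U$ --- a point the paper handles in the remark following the proposition), whereas the paper inserts the unit $P\to\Ar(C)$ directly after the $F$ factor and cites Riehl's lemma; both give the same conclusion.
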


\begin{proof}
In any simplicial model category $\cat{M}$, the realization functor $\on{Fun}(\Delta\op,\cat{M})\to\cat{M}$ is a left Quillen functor by~\cite[VII, Proposition 3.6]{goersssimplicial}. In particular, proposition~\ref{prop-bar is cofibrant} implies  that the realization of the bar resolution is cofibrant. 

The bar resolution with its augmentation to $F$ has an extra degeneracy. This extra degeneracy is given by
\[F\times_P (\Ar(C)\times_P\ldots)\cong F\times_PP\times_P (\Ar(C)\times_P\ldots)\to F\times_P\Ar(C)\times_P (\Ar(C)\times_P\ldots)\]
induced by the unit map $P\to \Ar(C)$. In particular, according to~\cite[Lemma 4.5.1.]{riehlcategorical} the realization of the bar construction is weakly equivalent to $F$. 
\end{proof}

\begin{rem}
Note that the forgetful functors $\Sp^C\to \Sp_{/P}$ and $\Sp_{/P}\to \Sp$ are simplicial and create colimits, hence it does not make a difference to compute the realization in any of these three categories.
\end{rem}

\subsection{Derived Yoneda lemma.}

If $C$ is an internal category whose source map is a fibration, then the category $\Sp^{C\op}$ is a simplicial model category. Let $c\in P$ be an object. We define the internal functor $h_c\in\Sp^{C\op}$. It is given by $h_c=\Ar(C)\times_{P}\{c\}\to P$, where the map $h_c\to P$ is induced by the source map $\Ar(C)\to P$.

Note that if $C$ is actually a simplicial category, then $h_c$ is exactly the presheaf on $C$ represented by $c$. 

We now have a derived Yoneda lemma for internal categories:

\begin{prop}\label{prop-derived Yoneda lemma}
Let $F$ be any object of $\Sp^{C\op}$. Then we have
\[\mathbb{R}\Map_{\Sp^{C\op}}(h_c,F)\simeq \on{hofiber}_cF
\]

\end{prop}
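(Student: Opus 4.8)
The plan is to identify $h_c$ as the free $C\op$-module on the point $c$ and then combine the free/forgetful adjunction with the elementary fact that mapping out of a point computes a fiber. First I would observe that $h_c$ is the image of the object $\{c\}\to P$ under the left adjoint $L\colon\Sp_{/P}\to\Sp^{C\op}$ to the forgetful functor $U$. Indeed, this left adjoint sends $Y\to P$ to $Y\times_P\Ar(C\op)$, and unwinding the source/target conventions on fiber products this is exactly $\Ar(C)\times_P\{c\}$ with structure map induced by the source of $\Ar(C)$, which is precisely $h_c$. Since the cofibrations of $\Sp_{/P}$ are the maps that are monomorphisms in $\Sp$, the object $\{c\}\to P$ is cofibrant, and the left Quillen functor $L$ preserves cofibrant objects; hence $h_c$ is cofibrant in $\Sp^{C\op}$.

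Because $h_c$ is cofibrant, the derived mapping space $\R\Map_{\Sp^{C\op}}(h_c,F)$ is computed as $\Map_{\Sp^{C\op}}(h_c,RF)$ for $RF$ a fibrant replacement of $F$. The adjunction $(L,U)$ is simplicial, so there is a natural isomorphism $\Map_{\Sp^{C\op}}(h_c,RF)\cong\Map_{\Sp_{/P}}(\{c\},U(RF))$. To evaluate the right-hand side I would note that the tensoring of $\{c\}\to P$ with $\Delta[k]$ in $\Sp_{/P}$ is $\{c\}\times\Delta[k]$ sitting over $c$, so a $k$-simplex of $\Map_{\Sp_{/P}}(\{c\},X)$ is a $k$-simplex of the strict fiber $X_c$; thus $\Map_{\Sp_{/P}}(\{c\},X)\cong X_c$ for any $X\to P$.

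Finally I would upgrade the strict fiber to the homotopy fiber. Since $U$ is right Quillen, the map $U(RF)\to P$ is a fibration, so by right properness of $\Sp$ its strict fiber $U(RF)_c$ is a model for $\on{hofiber}_c U(RF)$. As $F\to RF$ is a weak equivalence in $\Sp^{C\op}$, the map $UF\to U(RF)$ is a weak equivalence in $\Sp$, and homotopy fibers are invariant under weak equivalences, so $\on{hofiber}_c U(RF)\simeq\on{hofiber}_c F$. Stringing these identifications together yields $\R\Map_{\Sp^{C\op}}(h_c,F)\simeq\on{hofiber}_c F$. The only genuinely delicate step—and the one I would carry out with care—is the bookkeeping with the source/target fiber-product conventions that identifies $h_c$ with the free module $L(\{c\})$; once that identification is in place, everything else is a formal consequence of the Quillen adjunction and the reading of maps out of a point as a (homotopy) fiber.
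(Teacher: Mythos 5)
Your proof is correct and follows essentially the same route as the paper: the paper's four-line computation is exactly your chain (cofibrancy of $h_c$ via its identification with the free module on $\{c\}\to P$, the free/forgetful adjunction, mapping out of a point as the strict fiber, and fibrancy upgrading the strict fiber to the homotopy fiber), with the paper reducing to fibrant $F$ where you use a fibrant replacement $RF$ and transfer back — the same step in different words. Your explicit bookkeeping identifying $h_c\cong L(\{c\})$ is precisely what the paper leaves implicit in its steps ``$h_c$ is cofibrant'' and ``$\Map_{\Sp^{C\op}}(h_c,F)\cong\Map_{\Sp_{/P}}(\{c\}\to P,F)$''.
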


\begin{proof}
The functor $h_c$ is cofibrant in $\Sp^{C\op}$. Both sides of the equations preserve equivalences in the $F$ variable, therefore, it suffices to prove the proposition for $F$ fibrant in $\Sp^{C\op}$, then 
\begin{align*}
\mathbb{R}\Map_{\Sp^{C\op}}(h_c,F)&\simeq \Map_{\Sp^{C\op}}(h_c,F)\\
                                 &\cong\Map_{\Sp_{/P}}(\{c\}\to P,F)\\
                                 &\cong \{c\}\times_PF\\
                                 &\simeq\on{hofiber}_cF
\end{align*}

\end{proof}

\subsection{Weak equivalences between presheaves.}

Let $C$ be a strongly Segal internal category and $f$ be a point of $\Ar(C)$. We have a commutative diagram
\[
\xymatrix{
\Ar(C)\times_P\Ar(C)\ar[r]^{\hspace{10pt}\pi_2}\ar[d]_{m}&\Ar(C)\ar[d]^t&\Delta[0]\ar[l]_{f}\ar[d]\\
\Ar(C)\ar[r]_t&P&\Delta[0]\ar[l]^{t(f)}
}
\]
in which $m$ is the composition map and $\pi_2$ is the second projection. Taking pullbacks on each rows, we get a map
\[(\Ar(C)\times_P\Ar(C))\times_{\Ar(C)}\Delta[0]\to \Ar(C)\times_P\Delta[0]\]

The left hand side is easily seen to coincide with $h_{s(f)}$ while the right hand side is by definition $h_{t(f)}$. Therefore, for any $f\in \Ar(C)$, we have constructed a map $f_*:h_{s(f)}\to h_{t(f)}$.

\begin{lemm}\label{lemm-characterization of equivalences in an internal categories}
Let $f$ be a point in $\Ar(C)$. Then the induced map $f_*:h_{s(f)}\to h_{t(f)}$ is a weak equivalence in $\Sp^{C\op}$ if and only if $f$ is in $C_{hoequiv}$.
\end{lemm}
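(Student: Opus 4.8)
The plan is to funnel the whole statement through the derived Yoneda lemma and reduce it to a purely fiberwise assertion about the Segal fibrant simplicial space $NC$. Write $P=\Ob(C)$. By construction the weak equivalences of $\Sp^{C\op}$ are created by the forgetful functor to $\Sp_{/P}$, and $P$ is fibrant because $C$ is strongly Segal. Hence proposition~\ref{prop-weak equivalences over a fixed space} lets me test $f_*$ one point at a time: the map $f_*\colon h_{s(f)}\to h_{t(f)}$ is a weak equivalence if and only if for every point $d$ of $P$ the induced map $\on{hofiber}_d(h_{s(f)})\to\on{hofiber}_d(h_{t(f)})$ is a weak equivalence in $\Sp$.

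Next I would identify these homotopy fibers. By the derived Yoneda lemma (proposition~\ref{prop-derived Yoneda lemma}) we have $\on{hofiber}_d(h_c)\simeq\R\Map_{\Sp^{C\op}}(h_d,h_c)$, and unwinding $h_c=\Ar(C)\times_P\{c\}$ together with the fibrancy of the source and target maps, this homotopy fiber is the mapping space $\map_C(d,c)$, the homotopy fiber of $(d_0,d_1)\colon\Ar(C)\to P\times P$ over $(d,c)$. From the explicit construction of $f_*$ out of the composition map $m$, the induced map on fibers is postcomposition $f\circ-\colon\map_C(d,s(f))\to\map_C(d,t(f))$, where composition is the homotopy-coherent composition of the Segal fibrant simplicial space $NC$. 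So the lemma reduces to: $f\circ-$ is a weak equivalence for every $d$ if and only if $f\in C_{hoequiv}$. The step I expect to be the main obstacle is exactly this bookkeeping, namely verifying that the strict objects $h_c$ and the strict map $f_*$ really model $\map_C(d,c)$ and the homotopy-coherent postcomposition; this relies on the Segal fibrancy of $NC$ and on the fact that the internal composition $m$ realizes the composition of the associated Segal space. Once this identification is secured, the remaining argument is routine.

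Finally I would settle the reduced statement using the homotopy theory of $NC$. For the ``only if'' direction, if $f\in C_{hoequiv}$ then $f$ is a homotopy equivalence, so it admits $g\colon t(f)\to s(f)$ with $fg\simeq\id_{t(f)}$ and $gf\simeq\id_{s(f)}$; postcomposition with $g$ then provides a homotopy inverse to $f\circ-$ on each $\map_C(d,-)$, using that postcomposition is functorial up to coherent homotopy in a Segal space (Rezk's analysis in~\cite{rezkmodel}). For the ``if'' direction, assume $f\circ-$ is a weak equivalence for all $d$. Taking $d=t(f)$ and pulling back $\id_{t(f)}$ produces a right inverse $g$ with $fg\simeq\id_{t(f)}$. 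Taking $d=s(f)$, both $g\circ f$ and $\id_{s(f)}$ are carried to $f$ up to homotopy by $f\circ-$, so injectivity on $\pi_0$ forces $gf\simeq\id_{s(f)}$; thus $g$ is a two-sided homotopy inverse, $f$ becomes an isomorphism in $\on{Ho}(NC)$, and therefore $f\in C_{hoequiv}$.
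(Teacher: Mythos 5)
Your reduction (testing $f_*$ fiberwise via proposition~\ref{prop-weak equivalences over a fixed space} and the derived Yoneda lemma) and your endgame (the two-sided-inverse argument showing that postcomposition by $f$ is an equivalence on all mapping spaces if and only if $f$ is a homotopy equivalence) are both sound, and the latter matches step (1) of the paper's proof. But the step you label as ``bookkeeping'' and declare routine is in fact the technical heart of the lemma, and it is genuinely missing. The problem is that $NC$ is only \emph{projectively} Segal fibrant: strong Segalness gives that $s$ and $t$ are separately fibrations, but the joint map $(s,t):\Ar(C)\to P\times P$ need not be a fibration and $NC$ need not be injectively fibrant. Consequently none of Rezk's machinery that your argument invokes --- the mapping spaces $\map(d,c)$ as strict fibers, the homotopy-coherent composition obtained from a section of the Segal map, the category $\on{Ho}$, and above all the identification of $X_{hoequiv}$ with the components of homotopy equivalences (\cite[Theorem 6.2]{rezkmodel}) --- applies directly to $NC$. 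Note in particular that $C_{hoequiv}$ is \emph{defined} via the pullback involving $\pi_0\Map(J,NC)$, so even your final step ``$f$ becomes an isomorphism in $\on{Ho}(NC)$, therefore $f\in C_{hoequiv}$'' presupposes an identification that has to be proved.

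The paper resolves exactly this by passing to an injectively fibrant replacement $i:NC\to X$ (so $X$ is a genuine Segal space) and then doing three nontrivial comparisons: (a) a point $f$ of $\Ar(C)$ lies in $C_{hoequiv}$ if and only if $i(f)$ lies in $X_{hoequiv}$ (its step (3), using that both are unions of path components and that $i$ induces a $\pi_0$-isomorphism); (b) over $X$, postcomposition by $g=i(f)$ is modelled not by a single map but by a zig-zag $h_{d_0(g)}\stackrel{\sim}{\longleftarrow}Z_g\rightarrow h_{d_1(g)}$ of fibrant objects of $\Sp_{/X_0}$ built from the Segal maps (its step (2)); and (c) a right-properness argument producing a weak equivalence of squares that identifies your strict map $f_*$ over $NC$ with the map $Z_g\to h_{d_1(g)}$ over $X$ (its step (4)). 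Some argument of this kind --- replacing $NC$ by a Segal space and comparing the strict construction $f_*$ with the homotopy-coherent one through explicit zig-zags --- cannot be avoided, because over $NC$ itself the homotopy-coherent composition on homotopy fibers is not even defined. So your proposal, as written, reduces the lemma to its hardest part and then omits that part.
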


\begin{proof}
Let $X$ be a Segal space. For $p$ a point in $X_0$, we denote by $h_p$ the fiber product $X_1\times_{X_0}\{p\}$ taken along $d_1:X_1\to X_0$. The map $d_0:X_1\to X_0$ makes $h_p$ into a fibrant space over $X_0$ whose fiber over a point $q$ in $X_0$ is the space $\map_X(q,p)$.

(1) Let $X$ be a Segal space. Recall from~\cite[Section 5.3.]{rezkmodel} that a choice of a section of the Segal map $X_2\to X_1\times_{X_0}X_1$ induces composition maps
\[\map_X(x,y)\times\map_X(y,z)\to \map_X(x,z)\]
that are associative and unital up to homotopy. We assume that such a choice has been made and we denote the corresponding composition by $\circ$. We claim that a point $g$ in $X_1$ is in $X_{hoequiv}$ if and only if the map
\[\map_X(x,d_0g)\goto{g\circ-}\map_X(x,d_1g)\]
given by postcomposition by $g$ is a weak equivalence for each $x$. 

Indeed, if $g$ is a homotopy equivalence, according to~\cite[Section 5.5.]{rezkmodel}, there is a map $h$  in $X_1$ such that $d_0(h)=d_1(g)$ and $d_1(h)=d_0(g)$ and such that $h\circ g$ is in the component of $\id_{d_0(g)}$ in $\map_X(d_0(g),d_0(g))$. Similarly, there is a map $k$ such that $g\circ k$ is in the component of $\id_{d_1(g)}$. Thus, we can consider the composite
\[\map_X(x,d_0g)\goto{g\circ-}\map_X(x,d_1g)\goto{h\circ -} \map_X(x,d_0g)\]

Picking a one simplex in $X_1$ from $h\circ g$ to $\id_{d_0(g)}$, we get a simplicial homotopy from the above composite to the identity map. Therefore $h\circ-$ is a left homotopy inverse for $g\circ-$. Similarly, we would prove that $k\circ -$ is a right homotopy inverse for $g\circ -$.

Conversely, if $g\circ-$ is a weak equivalence, then taking $\pi_0$ and using Yoneda's lemma in $\on{Ho}(X)$, we see that the class of $g$ in $\pi_0(\map_X(d_0g,d_1g))$ is an isomorphism in $\on{Ho}(X)$ which is precisely saying that $g$ is in $X_{hoequiv}$.

(2) Let $g$ be a point in $X_1$. We have a commutative diagram of spaces
\[\xymatrix{
X_1\times_{X_0}X_1\ar[d]_{(d_0\circ\pi_1,\pi_2)}&X_2\ar[l]_{\hspace{20pt}\varphi_2}\ar[d]_{(d_0\circ d_0,d_2)}\ar[r]^{d_1}&X_1\ar[d]^{(d_0,d_1)}\\
X_0\times X_1&X_0\times X_1\ar[r]_{(\id,d_1)}\ar[l]&X_0\times X_0\\
X_0\ar[u]\times\{g\}&X_0\times\{g\}\ar[l]\ar[r]\ar[u]&X_0\times \{d_1g\}\ar[u]
}
\]
in which $\pi_1$ and $\pi_2$ generically denote the left and right projections from a fiber product to its two factors, $\varphi_2$ is the Segal map and the unlabeled maps are either identities or obvious inclusions. Using the fact that $X$ is injectively fibrant, we see that each of the downward pointing arrows is a fibration. Using the fact that $X$ is Segal, we see that each left pointing arrow is a weak equivalence. Therefore, taking pullbacks of each vertical cospan, we get a zig-zag of fibrant objects in $\Sp_{/X_0}$
\[h_{d_0(g)}\stackrel{\sim}{\longleftarrow} Z_g\rightarrow h_{d_1(g)}\]
where $Z_g$ is just a notation for the pullback of the middle cospan.

By proposition~\ref{prop-weak equivalences over a fixed space}, this zig-zag of spaces represents an isomorphism in $\on{Ho}\Sp_{/X_0}$ if and only if for each $q$ in $X_0$, the induced zig-zag on fibers over $q$ is a weak equivalence. In other words, the map $Z_g\to h_{d_1g}$ is a weak equivalence if and only if for each $q$ in $X_0$, the zig-zag
\[\map_X(q,d_0g)\leftarrow {}_qZ_g\to \map_X(q,d_1g)\]
induces an isomorphism in $\on{Ho}(\Sp)$ (where ${}_qZ_g$ denotes the fiber of $Z_g$ over $q$). 

On the other hand, the map ${}_qZ_g\to \map_X(q,d_0g)$ has a preferred section induced by our choice of section of the Segal map $X_2\to X_1\times_{X_0}X_1$. Thus the previous zig-zag represents the same map in $\on{Ho}(\Sp)$ than the map $g\circ-: \map_X(q,d_0g)\to \map_X(q,d_1g)$.

Hence, according to (1), $g$ is in $X_{hoequiv}$ if and only if the map $Z_g\to h_{d_1(g)}$ is a weak equivalence in $\Sp_{/X_0}$.

(3) Now we prove the proposition. Let $i:NC\to X$ be a fibrant replacement in $s\Sp_{inj}$ so that $X$ is a Segal space that is levelwise weakly equivalent to $NC$. We claim that a point $f\in \Ar(C)$ lies in $C_{hoequiv}$ if and only if $i(f)$ is in $X_{hoequiv}$. Indeed, let us consider the following commutative diagram
\[
\xymatrix{
\{f\}\ar[r]\ar[d]&\pi_0(\Ar(C))\ar[d]&\pi_0(C_{hoequiv})\ar[d]\ar[l]\\
\{i(f)\}\ar[r]&\pi_0(X_1)&\pi_0(X_{hoequiv})\ar[l]
}
\]
in which the vertical maps are induced by $i$ and the right pointing horizontal maps send $f$  and  $i(f)$ to their component. 

Since $C_{hoequiv}$ is a set of components of $\Ar(C)$, $f$ is in $C_{hoequiv}$ if and only if the pullback of the top row is non-empty. Similarly, $i(f)$ is in $X_{hoequiv}$ if and only if the pullback of the bottom row is non-empty. Since the vertical maps induce an isomorphism between the top row and the bottom row, their pullbacks must be isomorphic as well.

(4) Let $g=i(f)$. We have a commutative diagram in the category of cospans of spaces
\[
\xymatrix{
[\Ar(C)\times_P\Ar(C)\goto{\pi_2} \Ar(C)\leftarrow \{f\}]\ar[r]^{\hspace{25pt}(m,t,t)}\ar[d]&[\Ar(C)\rightarrow P\leftarrow \{t(f)\}]\ar[d]\\
[X_2\goto{d_2}X_1\leftarrow \{g\}]\ar[r]_{(d_1,d_1,d_1)}&[X_1\rightarrow X_0\leftarrow\{d_1g\}]
}
\]
in which the vertical maps are induced by $i$.

Note that the right pointing map in each cospan is a fibration and the vertical maps are weak equivalences of cospans of spaces.

Thus, taking pullbacks and using the right properness of $\Sp$, we get a commutative diagram of spaces
\[
\xymatrix{
h_{s(f)}\ar[r]^{f_*}\ar[d]&h_{t(f)}\ar[d]\\
Z_g\ar[r]_{\hspace{-15pt}u}& h_{d_1(g)}
}
\]
in which the vertical maps are weak equivalences. Here $Z_g$ is the pullback of the bottom right corner and coincide with $Z_g$ in paragraph (2).

Thus, the map $f_*$ is a weak equivalence if and only if the map $Z_g\to h_{d_1(g)}$ is a weak equivalence. But we have proved in paragraph (2) that this last requirement is equivalent to $g$ being in $X_{hoequiv}$ and according to paragraph (3), this is also equivalent to $f$ being in $C_{hoequiv}$.
\end{proof}

Before stating the following corollary, recall that for a simplicial category $C$, the category $\on{Ho}(C)$ denotes the category obtained by applying $\pi_0$ to each mapping space of $C$.

\begin{coro}\label{coro-equivalence in a simplicial category}
Let $C$ be a fibrant simplicial category. Then, the component of a point $f$ in $\map_C(a,b)$ represents an isomorphism of $\on{Ho}(C)$ if and only if the image of $f$ in $\Ar(C)$ is in $C_{hoequiv}$.
\end{coro}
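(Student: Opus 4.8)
The plan is to deduce the corollary from lemma~\ref{lemm-characterization of equivalences in an internal categories} by reinterpreting the condition ``$f_*$ is a weak equivalence in $\Sp^{C\op}$'' in purely homotopy-categorical terms once $C$ is a simplicial category. First I would record that a fibrant simplicial category $C$ is strongly Segal: its object space $P=\Ob(C)$ is discrete, hence fibrant, and the source and target maps are fibrations since each $\map_C(a,b)$ is a Kan complex. Consequently $\Sp^{C\op}$ carries the model structure of the previous section, which for a simplicial category is the projective model structure on simplicial presheaves, and $h_c$ is the representable presheaf $\map_C(-,c)$. Under this identification the map $f_*:h_{s(f)}\to h_{t(f)}$ is postcomposition $f\circ -:\map_C(-,a)\to\map_C(-,b)$, where $a=s(f)$ and $b=t(f)$. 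By lemma~\ref{lemm-characterization of equivalences in an internal categories}, $f$ lies in $C_{hoequiv}$ if and only if $f_*$ is a weak equivalence in $\Sp^{C\op}$, i.e.\ a levelwise weak equivalence, i.e.\ $f\circ -:\map_C(d,a)\to\map_C(d,b)$ is a weak equivalence for every object $d$.

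It therefore remains to show that $f\circ-$ is a weak equivalence on every mapping space if and only if the component of $f$ is an isomorphism in $\on{Ho}(C)$. For one direction, if every $f\circ-$ is a weak equivalence, then applying $\pi_0$ shows that postcomposition by the class $[f]$ gives a bijection $\on{Ho}(C)(d,a)\to\on{Ho}(C)(d,b)$ for all $d$; by the Yoneda lemma in $\on{Ho}(C)$ this forces $[f]$ to be an isomorphism. For the converse, if $[f]$ is an isomorphism I would choose a point $g\in\map_C(b,a)$ representing its inverse, so that $g\circ f$ and $\id_a$ (resp.\ $f\circ g$ and $\id_b$) lie in the same path component, and then argue that $g\circ-$ is a homotopy inverse to $f\circ-$.

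The one point requiring care, and the place where the simplicial enrichment is genuinely used, is promoting the path-level identities $g\circ f\simeq\id_a$ and $f\circ g\simeq\id_b$ to actual homotopy equivalences of Kan complexes. Here I would exploit that in a simplicial category composition is strictly associative and unital and is given by a simplicial map $\map_C(a,a)\times\map_C(d,a)\to\map_C(d,a)$. A $1$-simplex $H$ of $\map_C(a,a)$ joining $g\circ f$ to $\id_a$, fed into this composition map via $H\times\id$, produces a simplicial homotopy $\Delta[1]\times\map_C(d,a)\to\map_C(d,a)$ from $(g\circ f)\circ-$ to the identity; since $\map_C(d,a)$ is a Kan complex this is a genuine homotopy. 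By strict associativity $(g\circ-)\circ(f\circ-)=(g\circ f)\circ-$, so $g\circ-$ is a left homotopy inverse of $f\circ-$, and symmetrically a right homotopy inverse using $f\circ g\simeq\id_b$. Hence each $f\circ-$ is a homotopy equivalence, and the corollary follows. I expect this homotopy-inverse argument to be the only nontrivial step; the rest is bookkeeping about the identifications above together with the Yoneda lemma in $\on{Ho}(C)$.
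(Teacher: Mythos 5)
Your proof is correct and takes essentially the same route as the paper's: both reduce to lemma~\ref{lemm-characterization of equivalences in an internal categories} by using discreteness of $\Ob(C)$ to identify weak equivalences in $\Sp^{C\op}$ with objectwise (fiberwise) weak equivalences, and both establish the equivalence with invertibility in $\on{Ho}(C)$ via Yoneda in $\on{Ho}(C)$ in one direction and, in the other, by promoting a $1$-simplex joining $g\circ f$ to $\id_a$ (and $f\circ g$ to $\id_b$) into a simplicial homotopy exhibiting $g\circ-$ as a two-sided homotopy inverse of $f\circ-$. The only differences are cosmetic: you make explicit the strongly Segal verification and the identification of $f_*$ with postcomposition, which the paper leaves implicit.
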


\begin{proof} 
First, we claim that $f$ represents an isomorphism in $\on{Ho}(C)$ if and only if postcomposition by $f$ induces a weak equivalence $\map_C(x,a)\to\map_C(x,b)$ for any object $x$. 

Indeed, if postcomposition by $f$ induces a weak equivalence as above, then the component of $f$ is an isomorphism in $\on{Ho}(C)$ by Yoneda's lemma in $\on{Ho}(C)$. Conversely, if the component of $f$ is an isomorphism, this means that there exists $g$ a point in $\map_C(b,a)$ and a one-simplex $u$ in $\map_C(a,a)$ connecting $g\circ f$ to $\id_a$ and a one-simplex $v$ in $\map_C(b,b)$ connecting $f\circ g$ to $\id_B$. Then $u$ gives a homotopy between the composite
\[\map_C(x,a)\goto{f\circ-}\map_C(x,b)\goto{g\circ-}\map_C(x,a)\]
and the identity of $\map_C(x,a)$ in the simplicial category of presheaves over $C$. Similarly, $v$ gives a homotopy between the map postcomposing by $f\circ g$ and the identity of $\map_C(x,b)$. This means that postcomposition by $f$ is a weak equivalence for any $x$.

Now, we see that the postcomposition by $f$ induces weak equivalences $\map_C(x,a)\to\map_C(x,b)$ for all $x$ if and only if the map $f_*$ is a fiberwise weak equivalence from $h_a$ to $h_b$. Since the space of objects of $C$ is discrete and $h_a$ and $h_b$ are fibrant in $\Sp^{C}$, $f_*$ is a weak equivalence in $\Sp^{C}$ if and only if it is a fiberwise weak equivalence. Thus, using lemma~\ref{lemm-characterization of equivalences in an internal categories}, we see that the component of $f$ is an isomorphism in $\on{Ho}(C)$ if and only if the image of $f$ in $C$ belongs to $C_{hoequiv}$.  
\end{proof}

\begin{coro}\label{coro-equivalences induce equivalences on presheaves}
Let $C$ be strongly Segal and let $F$ be an object of $\Sp^{C\op}$. Let $f$ be a point in $C_{hoequiv}$. Then, the map 
\[f^*:\on{hofiber}_{t(f)}F\to \on{hofiber}_{s(f)}F\]
is a weak equivalence.
\end{coro}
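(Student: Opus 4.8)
The plan is to deduce this from the derived Yoneda lemma (Proposition~\ref{prop-derived Yoneda lemma}) together with Lemma~\ref{lemm-characterization of equivalences in an internal categories}. By Proposition~\ref{prop-derived Yoneda lemma} there are natural weak equivalences
\[\mathbb{R}\Map_{\Sp^{C\op}}(h_{s(f)},F)\simeq \on{hofiber}_{s(f)}F\qquad\text{and}\qquad \mathbb{R}\Map_{\Sp^{C\op}}(h_{t(f)},F)\simeq \on{hofiber}_{t(f)}F.\]
The first thing I would do is identify, under these two equivalences, the map $f^*$ with the map
\[(f_*)^*:\mathbb{R}\Map_{\Sp^{C\op}}(h_{t(f)},F)\to\mathbb{R}\Map_{\Sp^{C\op}}(h_{s(f)},F)\]
obtained by the contravariant functoriality of $\mathbb{R}\Map_{\Sp^{C\op}}(-,F)$ from the morphism of representables $f_*:h_{s(f)}\to h_{t(f)}$ constructed just before Lemma~\ref{lemm-characterization of equivalences in an internal categories}. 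The directions match, since $\mathbb{R}\Map(-,F)$ is contravariant in its first variable. Both $f^*$ and $(f_*)^*$ arise from the single morphism $f$, and because the equivalence of Proposition~\ref{prop-derived Yoneda lemma} is natural in the representing object, this identification is precisely the assertion that the contravariant action of $f$ on homotopy fibers agrees, under Yoneda, with precomposition by $f_*$.

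Granting this identification, the corollary follows formally. Since $f$ is a point of $C_{hoequiv}$, Lemma~\ref{lemm-characterization of equivalences in an internal categories} shows that $f_*:h_{s(f)}\to h_{t(f)}$ is a weak equivalence in $\Sp^{C\op}$. The objects $h_{s(f)}$ and $h_{t(f)}$ are cofibrant, as recorded in the proof of Proposition~\ref{prop-derived Yoneda lemma}, so by Ken Brown's lemma the functor $\mathbb{R}\Map_{\Sp^{C\op}}(-,F)$ carries $f_*$ to a weak equivalence. Hence $(f_*)^*$ is a weak equivalence, and therefore so is $f^*$.

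The hard part will be the naturality statement of the first step, namely that $f^*$ really is the Yoneda translate of $(f_*)^*$; this is where one must trace through both the construction of $f_*$ out of the composition map of $C$ and the chain of isomorphisms identifying $\on{hofiber}_c F$ with $\mathbb{R}\Map_{\Sp^{C\op}}(h_c,F)$ in Proposition~\ref{prop-derived Yoneda lemma}. Once this bookkeeping is settled, all of the homotopical content is already packaged in Lemma~\ref{lemm-characterization of equivalences in an internal categories} and in the cofibrancy of the representables, so no further work is required.
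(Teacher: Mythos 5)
Your proposal is correct and follows essentially the same route as the paper's own proof: identify $\on{hofiber}_{t(f)}F\to\on{hofiber}_{s(f)}F$ with the map on derived mapping spaces induced by $f_*:h_{s(f)}\to h_{t(f)}$ via the derived Yoneda lemma, invoke lemma~\ref{lemm-characterization of equivalences in an internal categories} to see that $f_*$ is a weak equivalence, and conclude since $\R\Map_{\Sp^{C\op}}(-,F)$ preserves weak equivalences. The identification step you flag as the ``hard part'' is treated at exactly the same level of detail in the paper (it is simply asserted there), so your argument is not missing anything beyond what the paper itself leaves implicit.
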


\begin{proof}
According to the derived Yoneda lemma~\ref{prop-derived Yoneda lemma}, the map $f^*$ can be identified with the map
\[\R\Map_{\Sp^{C\op}}(h_{t(f)},F)\to \R\Map_{\Sp^{C\op}}(h_{s(f)},F)\]
induced by the map $f_*:h_{s(f)}\to h_{t(f)}$. But that map $f_*$ is a weak equivalence according to lemma~\ref{lemm-characterization of equivalences in an internal categories}. Since the derived mapping space preserves weak equivalences, we are done.
\end{proof}

\begin{prop}\label{prop-weak equivalences between presheaves}
Let $C$ be a strongly Segal internal category with $P=\Ob(C)$. Let $S$ be a set of path components of $P$ such that the composite $S\to\pi_0(P)\to \pi_0(P)/\sim$ is surjective. Then a map $u:F\to G$ in $\Sp^{C\op}$ is a weak equivalence if and only if it is a weak equivalence over the path components in $S$.
\end{prop}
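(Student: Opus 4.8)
The plan is to deduce both directions from Proposition~\ref{prop-weak equivalences over a fixed space}. Since $C$ is strongly Segal, the space $P=\Ob(C)$ is fibrant and the source map is a fibration, so that proposition applies and tells us that $u$ is a weak equivalence in $\Sp^{C\op}$ exactly when it is a weak equivalence over every path component of $P$. The forward implication is then immediate: a weak equivalence over all of $\pi_0(P)$ is in particular a weak equivalence over those components lying in $S$. For the converse, the whole content is to show that the property ``$u$ is a weak equivalence over a given path component of $P$'' is invariant under the equivalence relation $\sim$ on $\pi_0(P)$ used to define $\pi_0(P)/\sim$. Granting this, surjectivity of $S\to\pi_0(P)/\sim$ forces every path component to be $\sim$-equivalent to one in $S$, so being a weak equivalence over the components in $S$ propagates to all of $\pi_0(P)$, and Proposition~\ref{prop-weak equivalences over a fixed space} finishes the argument.

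The key step is a single generating relation of $\sim$, namely $\pi_0(s)([f])\sim\pi_0(t)([f])$ for a class $[f]\in\pi_0(C_{hoequiv})$. Fix such an $f\in C_{hoequiv}$. Corollary~\ref{coro-equivalences induce equivalences on presheaves} produces, for every object $H$ of $\Sp^{C\op}$, a weak equivalence $f^*\colon\on{hofiber}_{t(f)}H\to\on{hofiber}_{s(f)}H$ which, under the derived Yoneda lemma (Proposition~\ref{prop-derived Yoneda lemma}), is identified with $\R\Map_{\Sp^{C\op}}(f_*,H)$ for the map $f_*\colon h_{s(f)}\to h_{t(f)}$ of Lemma~\ref{lemm-characterization of equivalences in an internal categories}. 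Because this description is functorial in $H$, applying it to $u\colon F\to G$ yields a commutative square
\[
\xymatrix{
\on{hofiber}_{t(f)}F\ar[r]^{f^*}\ar[d]&\on{hofiber}_{s(f)}F\ar[d]\\
\on{hofiber}_{t(f)}G\ar[r]_{f^*}&\on{hofiber}_{s(f)}G
}
\]
in which both horizontal maps are weak equivalences. By the two-out-of-three property, the left vertical map is a weak equivalence if and only if the right one is; that is, $u$ is a weak equivalence over the point $s(f)$ if and only if it is a weak equivalence over $t(f)$.

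Finally I would assemble these local statements. The lemma preceding Proposition~\ref{prop-weak equivalences over a fixed space} shows that being a weak equivalence over a point depends only on its path component, so the previous step says exactly that $u$ is a weak equivalence over the component $\pi_0(s)([f])$ if and only if over $\pi_0(t)([f])$. Hence the set of components over which $u$ is a weak equivalence is saturated for each generating relation of $\sim$, and being reflexive, symmetric and transitive, $\sim$ then makes this set a union of $\sim$-classes. Since $S$ surjects onto $\pi_0(P)/\sim$, if $u$ is a weak equivalence over every component in $S$ it is a weak equivalence over every component of $P$, whence $u$ is a weak equivalence in $\Sp^{C\op}$ by Proposition~\ref{prop-weak equivalences over a fixed space}. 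The only genuinely delicate point is the naturality in the presheaf variable that underlies the commutative square above; this is supplied precisely by the Yoneda description of the maps $f^*$, and the rest is bookkeeping with the relation $\sim$.
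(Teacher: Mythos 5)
Your proof is correct and follows essentially the same route as the paper: the forward direction is immediate, and the converse uses corollary~\ref{coro-equivalences induce equivalences on presheaves} to build the commutative square of (homotopy) fibers with horizontal weak equivalences $f^*$, applies two-out-of-three, and concludes via proposition~\ref{prop-weak equivalences over a fixed space}. The one place you diverge is actually an improvement in rigor: the paper asserts that surjectivity of $S\to\pi_0(P)/\sim$ produces a \emph{single} point $f\in C_{hoequiv}$ with $s(f)$ lying over a component of $S$ and $t(f)$ over the given component $c$, which strictly speaking presupposes that the relation on $\pi_0(P)$ generated by $(\pi_0(d_0),\pi_0(d_1))$ is already symmetric and transitive, whereas your saturation argument (the set of components over which $u$ is a weak equivalence is closed under each generating relation in both directions, hence is a union of $\sim$-classes) handles arbitrary zig-zags and needs no such input.
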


\begin{proof}
Clearly, if $u:F\to G$ is a weak equivalence in $\Sp^{C\op}$, the induced map
\[\on{hofiber}_xF\to\on{hofiber}_xG\]
is a weak equivalence for each $x\in P$. In particular, the map $F\to G$ is a weak equivalence over the path components in $S$. 

Conversely, let $u:F\to G$ be a weak equivalence over the path components in $S$. Without loss of generality, we can assume that $F$ and $G$ are fibrant.

Let $c$ be a path component not in $S$. We want to prove that $F\to G$ is a weak equivalence over $c$. Since the map $S\to\pi_0(P)/\sim$ is surjective, there exists a point $f\in C_{hoequiv}$ such that $s(f)$ is in $S$ and $t(f)$ is in $c$.

We have a commutative diagram.
\[
\xymatrix{
F_{t(f)}\ar[d]_{u_{t(f)}}\ar[r]^{f^*}& F_{s(f)}\ar[d]^{u_{s(f)}}\\
G_{t(f)}\ar[r]_{f^*}&G_{s(f)}
}
\]
in which the vertical maps are induced by $u$. The map $u_{s(f)}:F_{s(f)}\to G_{s(f)}$ is a weak equivalence because by assumption $F\to G$ is a weak equivalence over the path components of $s(f)$. On the other hand, the horizontal maps are weak equivalences because of corollary~\ref{coro-equivalences induce equivalences on presheaves}. Therefore, the map $F\to G$ is also a weak equivalence over $c$, the path component of $t(f)$.
\end{proof}

\subsection{Base change adjunction.}

Let $\alpha:C\to D$ be a morphism of internal categories. We want to extract from it an adjunction
\[\alpha_!:\Sp^{C}\leftrightarrows \Sp^{D}:\alpha^*\]

We denote by $P$ the space of objects of $C$ and $Q$ the space of objects of $D$ and by $u:P\to Q$ the value of $\alpha$ on objects.

We start by constructing $\alpha^*$. We have already defined a functor $u^*:\Sp_{/Q\times Q}\to \Sp_{/P\times P}$ and observed that it is lax monoidal. For $F\in \Sp^D$ we have a map
\[(F\times_QP)\times_P u^* \Ar(D)=(F\times_QP)\times_P(P\times_Q\Ar(D)\times_QP)\to F\times_Q\Ar(D)\times_QP\to F\times_QP\]
where the first map is induced by the map $P\to Q$ and the second map is induced by the action of $\Ar(D)$ on $F$.
It is straightforward to check that this equips the object $F\times_QP$ of $\Sp_{/P}$ with an action of the internal category $u^*\Ar(D)$. We can pullback this action along the map $\Ar(C)\to u^*\Ar(D)$ to construct an action of $\Ar(C)$ on $F\times_QP$. The resulting element of $\Sp^C$ is defined to be $\alpha^*F$.

The functor $\alpha_!$ is the left adjoint of $\alpha^*$. It can also be defined as the unique colimit preserving functor sending an internal functor of the form $F\times_P\Ar(C)$ to $F\times_Q\Ar(D)$.

Now assume that $C$ and $D$ are strongly Segal internal categories. If $F\to G$ is a (trivial) fibration in $\Sp^D$, then $P\times_QF\to P\times_QG$ is a (trivial) fibration in $\Sp^C$. Thus $\alpha^*$ is a right Quillen functor.

This allows us to define the homotopy colimit and more generally the homotopy left Kan extension of an internal functor.

\begin{defi}
Let $\alpha:C\to D$ be a map between strongly Segal internal categories, and let $F\in \Sp^C$. The homotopy left Kan extension of $F$ along $\alpha$ is the left derived functor of $\alpha_!$ applied to $F$.
\end{defi}

By proposition~\ref{prop-cofibrant replacement}, $\mathbb{L}\alpha_!F$ can be computed as the realization of the following simplicial object in $\Sp^D$
\[F\times_Q\Ar(D)\leftleftarrows F\times_P\Ar(C)\times_Q\Ar(D) \mathrel{\substack{\textstyle\leftarrow\\[-0.6ex]
                      \textstyle\leftarrow \\[-0.6ex]
                      \textstyle\leftarrow}} F\times_P\Ar(C)\times_P\Ar(C)\times_Q\Ar(D)\ldots\]
  
\begin{rem}                    
If $C$ and $D$ are fibrant simplicial categories, the simplicial object we constructed to compute the homotopy left Kan extension coincides with the usual bar construction.
\end{rem}

We have the following derived version of a classical fact in category theory.

\begin{prop}\label{prop-left Kan extension is fully faithful}
Let $C$ and $D$ be strongly Segal internal categories. Let $\alpha:C\to D$ be a fully faithful map. Then, for any object $F$ of $\Sp^C$, the derived unit map $F\to \R\alpha^*\L\alpha_! F$ is a weak equivalence. 
\end{prop}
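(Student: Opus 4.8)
The plan is to dispose of the derived functor $\R\alpha^*$, reduce to free presheaves, and then settle the free case by a hands‑on homotopy‑fiber computation in which full faithfulness enters exactly once. Write $P=\Ob(C)$, $Q=\Ob(D)$ and $u:P\to Q$ for the map induced by $\alpha$ on objects. First I would observe that $\R\alpha^*$ is just $\alpha^*$: on underlying objects $\alpha^*$ is pullback along $u$, so it sends a map whose fiber over each point of $Q$ is a weak equivalence to a map whose fiber over each point of $P$ is a weak equivalence, and by proposition~\ref{prop-weak equivalences over a fixed space} (applied over the fibrant spaces $P$ and $Q$) it preserves all weak equivalences. Hence the derived unit is the honest map $F\to\alpha^*\L\alpha_! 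F$, and to prove it is a weak equivalence it suffices, again by proposition~\ref{prop-weak equivalences over a fixed space}, to check it on homotopy fibers over each point $c\in P$.

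Next I would reduce to the case where $F$ is free, i.e. of the form $Y\times_P\Ar(C)$ with $Y\in\Sp_{/P}$. The bar resolution exhibits $F$ as the realization of the Reedy cofibrant simplicial object $n\mapsto F\times_P\Ar(C)\times_P\cdots\times_P\Ar(C)$ (proposition~\ref{prop-cofibrant replacement}), each term of which is free. Both endpoints of the derived unit commute with this realization: the identity trivially, $\L\alpha_!$ as a total left derived functor, and $\alpha^*$ because it preserves colimits (pullback on underlying objects) and weak equivalences, while realization in $\Sp^C$ is computed in $\Sp$ (the forgetful functors create colimits) where it preserves levelwise weak equivalences, so realization is the homotopy colimit over $\Delta\op$ for every simplicial object. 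By naturality the derived unit for $F$ is therefore the realization of the derived units for the free terms, and it is enough to treat free $F$.

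For $F=Y\times_P\Ar(C)$ the object $F$ is cofibrant (the free functor is left Quillen and every object of $\Sp_{/P}$ is cofibrant), so $\L\alpha_! F=\alpha_! F=Y\times_Q\Ar(D)$ by the defining property of $\alpha_!$, and $\alpha^*\alpha_! F=(Y\times_Q\Ar(D))\times_Q P$. Now the strongly Segal hypotheses make all source and target maps fibrations, so every strict fiber product below is a homotopy fiber product and right properness lets me compute homotopy fibers honestly. Since the source maps are fibrations, $F\simeq Y\times^h_P\Ar(C)$ and $\alpha_! F\simeq Y\times^h_Q\Ar(D)$; taking homotopy fibers over $c$ and $u(c)$ along the (fibrant) target maps gives
\[
\on{hofiber}_c F\simeq Y\times^h_P\Ar(C)_{\to c},\qquad \on{hofiber}_c(\alpha^*\alpha_! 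F)\simeq Y\times^h_Q\Ar(D)_{\to u(c)},
\]
where $\Ar(C)_{\to c}$ and $\Ar(D)_{\to u(c)}$ denote the spaces of arrows with target $c$, resp. $u(c)$.

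Full faithfulness of $\alpha$ means precisely that the square with $\Ar(C)\to\Ar(D)$ over $P\times P\to Q\times Q$ is homotopy cartesian, i.e. $\Ar(C)\simeq\Ar(D)\times^h_{Q\times Q}(P\times P)$ over $P\times P$. Restricting the target coordinate to $c$ yields $\Ar(C)_{\to c}\simeq\Ar(D)_{\to u(c)}\times^h_Q P$ as spaces over $P$ via the source. Substituting this into the left-hand display and cancelling $Y\times^h_P P\simeq Y$ identifies $\on{hofiber}_c F\simeq Y\times^h_Q\Ar(D)_{\to u(c)}$ with $\on{hofiber}_c(\alpha^*\alpha_! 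F)$, and one checks this identification is induced by the unit map, so the unit is a weak equivalence on each fiber. This completes the free case and hence the proof. The main obstacle is this last computation: organizing the iterated homotopy fiber products (keeping straight which factor is glued along source versus target) so that full faithfulness can be applied in exactly the form of the homotopy cartesian square, and using the strongly Segal fibrancy conditions to justify replacing strict fibers by homotopy fibers throughout.
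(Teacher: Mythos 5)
Your argument has a genuine gap at its very first step, and it propagates through everything that follows. You claim that $\alpha^*$ preserves all weak equivalences, so that $\R\alpha^*$ may be replaced by the underived $\alpha^*$. This is false: on underlying objects $\alpha^*$ is the \emph{strict} pullback along $u:P\to Q$, and strict pullback along an arbitrary map does not preserve weak equivalences, even when $P$ and $Q$ are fibrant. For a counterexample, let $Q$ be the nerve of $I[1]$ (a contractible Kan complex with two vertices $0$ and $1$), let $P=\{1\}$ with $u$ the inclusion, and consider the weak equivalence $\{0\}\to Q$ in $\Sp_{/Q}$ (both objects are contractible); pulling back along $u$ gives $\varnothing\to\{1\}$, which is not a weak equivalence. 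Your justification via proposition~\ref{prop-weak equivalences over a fixed space} conflates strict fibers with homotopy fibers: pullback along $u$ identifies the \emph{strict} fiber over $c$ with the \emph{strict} fiber over $u(c)$, but strict fibers compute homotopy fibers only when the structure maps are fibrations, and neither $\L\alpha_!F$ nor even the free module $Y\times_Q\Ar(D)$ is fibrant over $Q$ (its structure map factors through the projection $Y\times_Q\Ar(D)\to\Ar(D)$, which is a pullback of the arbitrary map $Y\to Q$, so it is not a fibration; strong Segalness makes $s$ and $t$ fibrations separately, not $(s,t)$, and not these composites). The same conflation reappears in your free-case computation, where you identify $\on{hofiber}_c(\alpha^*\alpha_!F)$ with $Y\times^h_Q\Ar(D)_{\to u(c)}$: this is exactly the unjustified commutation of strict pullback along $u$ with homotopy fibers. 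Since $\alpha^*$ and $\alpha^*\circ R$ genuinely differ on non-fibrant objects, your computation of $\alpha^*\L\alpha_!F$ does not compute the derived unit, so the conclusion does not follow even granting the rest.

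This difficulty is precisely what the paper's proof is engineered to avoid, and the missing device is the factorization of $u$ as a weak equivalence $P\to P'$ followed by a fibration $p:P'\to Q$. The functor $p^*$ \emph{does} preserve all weak equivalences (right properness of $\Sp$ plus $p$ a fibration), and the natural transformation $u^*\to p^*$ is a weak equivalence on fibrant objects, so $\R\alpha^*$ can be computed by applying $p^*$ directly to $\L\alpha_!F$ with no fibrant replacement. Then, since realization commutes with base change, $p^*\L\alpha_!F$ is the realization of the two-sided bar construction $B_\bullet(F,C,P\times_Q\Ar(D)\times_QP')$, and full faithfulness enters (much as you intend, but implemented with the fibration $P'\times P'\to Q\times Q$ so that strict pullbacks are homotopy pullbacks) to show $\Ar(C)\to P\times_Q\Ar(D)\times_QP'$ is a weak equivalence of left $C$-modules, whence the unit is an equivalence by comparing bar constructions. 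Your reduction to free modules and your use of the homotopy cartesian square expressing full faithfulness are sound ideas, but without some way of correctly deriving $\alpha^*$ (the $P'$ trick or an equivalent), the proof cannot be completed as written.
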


\begin{proof}
We denote by $Q$ the space of objects of $D$, and by $P$ the space of objects of $C$ and by $u:P\to Q$ the map induced by $\alpha$ on objects. 

(1) Let $P\to P'\to Q$ be a factorization of $P\to Q$ as a weak equivalence followed by a fibration. Since $\alpha$ is fully faithful and $P'\times P'\to Q\times Q$ is a fibration, the map
\[\Ar(C)\to P'\times_Q\Ar(D)\times_Q P'\]
is a weak equivalence. This map factors as
\[\Ar(C)\to P\times_Q\Ar(D)\times_QP'\to P'\times_Q\Ar(D)\times_QP'\]
Since the source map $\Ar(D)\to Q$ is a fibration, the second map is a weak equivalence which implies by the two-out-of-three property that the map
\[\Ar(C)\to P\times_Q\Ar(D)\times_QP'\]
is a weak equivalence.

(2) The map $\Ar(C)\to P\times_Q\Ar(D)\times_QP'$ constructed in (1) is a weak equivalence of spaces over $P$ where on the left the map to $P$ is the source map and on the right it is the first projection. Moreover, we claim that this map is a map in $\Sp^{C\op}$. The left action of $\Ar(C)$ on $P\times_Q\Ar(D)\times_QP'$ is obtained by noticing that $P\times_Q\Ar(D)\times_QP'=\alpha^*(\Ar(D)\times_QP')$.

(3) If $F$ is an object of $\Sp^C$ and $G$ is a fibrant object of $\Sp^{C\op}$, we denote by $B_\bullet(F,C,G)$ the simplicial space
\[B_\bullet(F,C,G)=F\times_PG\leftleftarrows F\times_{P}\Ar(C)\times_PG \mathrel{\substack{\textstyle\leftarrow\\[-0.6ex]
                      \textstyle\leftarrow \\[-0.6ex]
                      \textstyle\leftarrow}} F\times_{P}\Ar(C)\times_{P}\Ar(C)\times_PG\ldots\]
                      
A weak equivalence $G\to G'$ between fibrant objects of $\Sp^{C\op}$ induces a levelwise equivalence $B_\bullet(F,C,G)\to B_\bullet(F,C,G')$ and hence a weak equivalence between their geometric realizations since by~\cite[Theorem 15.8.7.]{hirschhornmodel} any simplicial space is Reedy cofibrant. In particular, using (2), we find a weak equivalence
\[|B_\bullet(F,C,\Ar(C))|\to |B_\bullet(F,C,P\times_Q\Ar(D)\times_QP')|\]

This can be composed with the weak equivalence $F\to |B_\bullet(F,C,\Ar(C))|$ constructed in proposition~\ref{prop-cofibrant replacement}. In the end we get a weak equivalence
\[F\to |B_\bullet(F,C,P\times_Q\Ar(D)\times_QP')|\]

(4) Now we prove that for any $F$ in $\Sp^C$ the derived unit map $F\to \R\alpha^*\L\alpha_!F$ is a weak equivalence. Since weak equivalences in $\Sp^C$ are weak equivalences of the underlying spaces, it suffices to check that the map $F\to \R u^*\L\alpha_!F$ is a weak equivalence of spaces where $u^*:\Sp_{/Q}\to \Sp_{/P}$ is the functor sending $X\to Q$ to $X\times_QP$.

Recall that $p:P'\to Q$ is the fibration factoring $u:P\to Q$ as a weak equivalence followed by a fibration. We denote by $p'$ the functor sending $K\to Q$ to $K\times_QP'$. For $K$ a space over $Q$, we have an obvious pullback square
\[
\xymatrix{
u^*K\ar[d]\ar[r]& p^*K\ar[d]\\
P\ar[r]& P'
}
\]
The functor $p^*$ preserves all weak equivalences since $\Sp$ is right proper and $P'\to Q$ is a fibration. Moreover, if $X\to Q$, is a fibration, then $p^*X\to P'$ is a fibration and the previous pullback square is a homotopy pullback square. Since its bottom map is a weak equivalence, this implies that the natural transformation $u^*\to p^*$ is a weak equivalence on fibrant objects of $\Sp_{/Q}$. 

Let $R$ be a fibrant replacement functor in $\Sp_{/Q}$. We have a commutative diagram of functors $\Sp_{/Q}\to\Sp$
\[
\xymatrix{
u^*\ar[d]\ar[r]& u^*\circ R\ar[d]^{\simeq}\\
p^*\ar[r]^{\hspace{-10pt}\simeq}& p^*\circ R
}
\]
Thus the map $F\to u^*(R\L \alpha_! F)$ is a weak equivalence if and only if the map $F\to p^*\L\alpha_!F$ is a weak equivalence.

But we know that a model for $\L\alpha_! F$ is the realization of $B_\bullet(F,C,P\times_Q\Ar(D))$. Moreover, in spaces geometric realization commute with base change. Therefore, the map $F\to p^*\L\alpha_!F$ can be identified up to weak equivalence with the map
\[F\to |B_\bullet(F,C,p^*P\times_Q\Ar(D))|=|B_\bullet(F,C,P\times_Q\Ar(D)\times_QP')|\]
which according to (3) is a weak equivalence.
\end{proof}

\begin{theo}\label{theo-invariance of presheaf category}
Let $C$ and $D$ be strongly Segal internal categories. Let $\alpha:C\to D$ be a Rezk equivalence. Then the Quillen adjunction
\[\alpha_!:\Sp^{C}\leftrightarrows \Sp^D:\alpha^*\]
is a Quillen equivalence.
\end{theo}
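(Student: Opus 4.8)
The plan is to deduce the Quillen equivalence from the standard criterion \cite[Corollary 1.3.16]{hoveymodel}, which in the present situation says that the Quillen adjunction $(\alpha_!,\alpha^*)$ is a Quillen equivalence as soon as two things hold: the derived unit $F\to\R\alpha^*\L\alpha_!F$ is a weak equivalence for every cofibrant $F\in\Sp^C$, and the functor $\alpha^*$ reflects weak equivalences between fibrant objects. A Rezk equivalence between strongly Segal internal categories is a Dwyer--Kan equivalence, hence in particular fully faithful and essentially surjective, by proposition~\ref{prop-Dwyer Kan equivalences are Rezk equivalences ICat}. The derived unit condition is then nothing but proposition~\ref{prop-left Kan extension is fully faithful} applied to the fully faithful map $\alpha$, so no further work is needed there.

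The real task is to show that $\alpha^*$ reflects weak equivalences between fibrant objects, and this is where essential surjectivity is used. Write $P=\Ob(C)$, $Q=\Ob(D)$ and $u\colon P\to Q$ for the map induced by $\alpha$ on objects. Let $G\to G'$ be a map of fibrant objects of $\Sp^D$ whose image $\alpha^*G\to\alpha^*G'$ is a weak equivalence in $\Sp^C$. Since $\alpha^*$ is right Quillen, $\alpha^*G$ and $\alpha^*G'$ are fibrant, and the fibre of $\alpha^*G=G\times_QP$ over a point $p\in P$ is canonically the fibre $G_{u(p)}$ of $G$ over $u(p)$. As $P$ is fibrant, proposition~\ref{prop-weak equivalences over a fixed space} shows that the hypothesis amounts to saying that $G\to G'$ is a weak equivalence over each point of the form $u(p)$, and hence over every path component of $Q$ that meets the image of $u$.

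To finish I would apply proposition~\ref{prop-weak equivalences between presheaves}, but to the internal category $D\op$, which is again strongly Segal and satisfies $\Sp^{(D\op)\op}=\Sp^D$; the relation $\sim$ on $\pi_0(Q)$ is unchanged under passing to the opposite, since an arrow is an equivalence if and only if its opposite is. I take $S$ to be the set of path components of $Q$ lying in the image of $\pi_0(u)\colon\pi_0(P)\to\pi_0(Q)$. The composite $\pi_0(P)\to\pi_0(P)/\sim\to\pi_0(Q)/\sim$ is surjective because $\alpha$ is essentially surjective, and it factors through $S$, so $S\to\pi_0(Q)/\sim$ is surjective. Since $G\to G'$ is a weak equivalence over every component in $S$, proposition~\ref{prop-weak equivalences between presheaves} forces $G\to G'$ to be a weak equivalence in $\Sp^D$, which is exactly the reflection property sought.

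I expect the main obstacle to be precisely this reflection step: the delicate point is matching the fibrewise criterion of proposition~\ref{prop-weak equivalences between presheaves} (phrased for $\Sp^{C\op}$) against $\Sp^D$, and converting essential surjectivity of $\alpha$ into the surjectivity of $S\to\pi_0(Q)/\sim$ that the proposition requires. Everything homotopical about the fully faithful direction has already been isolated in proposition~\ref{prop-left Kan extension is fully faithful}, so once the reflection property is in hand the criterion of \cite{hoveymodel} closes the argument immediately.
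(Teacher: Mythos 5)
Your proof is correct, and its second half takes a genuinely different route from the paper's. The first half is identical: like the paper, you obtain fully faithfulness and essential surjectivity of $\alpha$ from proposition~\ref{prop-Dwyer Kan equivalences are Rezk equivalences ICat} and get the derived unit condition from proposition~\ref{prop-left Kan extension is fully faithful}. The paper then finishes by verifying the derived counit: it reduces, via proposition~\ref{prop-weak equivalences between presheaves}, to checking that $\L\alpha_!\R\alpha^*F\to F$ is an equivalence over the components of $Q$ meeting the image of $u$, and detects these equivalences using the corepresentables $h^p$ (the identity $\alpha_!h^p=h^{u(p)}$, a derived Yoneda argument, proposition~\ref{prop-derived adjunction}, and the unit proposition once more). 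You instead invoke Hovey's criterion (derived unit on cofibrant objects plus reflection of weak equivalences between fibrant objects by the right adjoint), and the reflection property comes almost for free: for fibrant $G$ the strict fibers of $\alpha^*G=G\times_QP$ are literally the fibers of $G$ over the points $u(p)$, and these compute homotopy fibers since the structure maps are fibrations, so the fibrewise detection statement of proposition~\ref{prop-weak equivalences between presheaves} applies directly --- no representables, no Yoneda argument, no derived-adjunction zig-zag. This makes your second half shorter and more elementary, at the cost of importing the external criterion from Hovey. You are also more careful than the paper on one point: proposition~\ref{prop-weak equivalences between presheaves} is stated for $\Sp^{C\op}$, so applying it to $\Sp^D$ requires passing to $D\op$ (which is again strongly Segal, with the same relation $\sim$ on $\pi_0(Q)$); the paper invokes that proposition for $\Sp^D$ without comment, whereas you address the variance explicitly.
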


\begin{proof}
As in the previous proposition, we denote by $Q$ the space of objects of $D$, and by $P$ the space of objects of $C$ and by $u:P\to Q$ the map induced by $\alpha$ on objects.

First, since $C$ and $D$ are strongly Segal, they are Segal fibrant which implies by proposition~\ref{prop-Dwyer Kan equivalences are Rezk equivalences ICat} that $\alpha$ is a Dwyer-Kan equivalence. Thus, we know from proposition~\ref{prop-left Kan extension is fully faithful} that the derived unit is an equivalence. Let $F\to Q$ be an object of $\Sp^D$. We want to prove that $\mathbb{L}\alpha_!\R\alpha^*F\to F$ is an equivalence. Since $\alpha$ is essentially surjective, according to proposition~\ref{prop-weak equivalences between presheaves}, it suffices to check that this map is a weak equivalence over the set $S$ of components of $Q$ containing a point of the form $u(p)$ for some point $p$ of $P$. 

Let $p$ be a point of $P$ and $h^p$ be the object of $\Sp^{C}$ corepresented by $u(p)$, that is $h^p=\{p\}\times_P\Ar(C)$. By definition of $\alpha_!$, $\alpha_! h^p= \{p\}\times_Q\Ar(D)=h^{u(p)}$. Since $h^p$ is cofibrant, $\alpha_!h^p$ is weakly equivalent to $\L\alpha_!h^p$.

Thus, in order to prove the proposition, it suffices to check that for any $p$ in $P$, the map
\[\R\Map_{\Sp^D}(\L\alpha_!h^p,\mathbb{L}\alpha_!\R\alpha^*F)\to \R\Map_{\Sp^D}(\L\alpha_!h^p,F)\]
is a weak equivalence. By proposition~\ref{prop-derived adjunction}, it is equivalent to prove that the map
\[\R\Map_{\Sp^C}(h^p,\R\alpha^*\mathbb{L}\alpha_!\R \alpha^*F)\to \R\Map_{\Sp^C}(h^p,\R\alpha^*F)\]
is a weak equivalence. But this follows immediately from proposition~\ref{prop-left Kan extension is fully faithful}.
\end{proof}

\begin{example}
In this example we allow ourselves to treat topological spaces as simplicial sets. The reader is invited to apply the functor $\on{Sing}$ as needed.

Let $(K,k)$ be a connected based topological space. There is a strongly Segal internal category $\on{Path}(K)$ described in~\cite[Example II.3.4]{andrademanifolds} whose objects are points of $K$ and morphisms are Moore paths between points. There is an obvious map $K\to\on{Path}(K)$ where $K$ is the subcategory of constant paths. This map is a levelwise equivalence. On the other hand, there is a Dwyer-Kan equivalence $\Omega K\to \on{Path}(K)$ where $\Omega K$ is the space of endomorphisms of $k$ in $\on{Path}(K)$. It is a strictly associative model for the loop space of $K$ at $k$. Hence we have a zig-zag of left Quillen equivalences
\[\Sp_{/K}\to\Sp^{\on{Path}(K)}\leftarrow \Sp^{\Omega K}\]
This recovers the folk theorem that spaces over $K$ are equivalent to spaces with an action the Moore loops of $K$.
\end{example}

\section{Comparison with simplicial categories}

In this last section, we compare the homotopy theory of internal categories with respect to the Rezk equivalences with that of simplicially enriched categories with respect to the Dwyer-Kan equivalences (defined in~\cite{bergnermodel}). Unfortunately, there does not seem to be a Quillen adjunction relating these two model categories. Nevertheless, we prove that the two underlying relative categories are equivalent. We recall a few facts about relative categories in the first subsection.

\subsection{Relative categories.}

\begin{defi}
A relative category is a pair $(\cat{C},w\cat{C})$ where $\cat{C}$ is a category and $w\cat{C}$ is a subcategory containing all the objects.
\end{defi}

The arrows of the category $w\cat{C}$ are called the weak equivalences of $\cat{C}$.

Note that any model category is in particular a relative category if we drop the data of the cofibrations and fibrations. Relative categories can be used to encode the homotopy theory of infinity categories by the following theorem.

\begin{theo}[Barwick-Kan]
There is a model category structure on the category of small relative categories in which the weak equivalences are the maps that are sent to weak equivalences in $\cat{CSS}_{inj}$ by Rezk's relative nerve construction (defined at the beginning of section 8 of~\cite{rezkmodel}). Moreover this model structure is Quillen equivalent to $\cat{CSS}_{inj}$.
\end{theo}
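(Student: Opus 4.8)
Since the statement is attributed to Barwick and Kan, the most honest proof proposal is to invoke their work directly: the theorem is established in~\cite{barwickrelative}, and for the purposes of this paper it suffices to cite that reference. Nevertheless, let me describe the strategy one would follow to prove it from scratch in a way compatible with the machinery above. The central object is Rezk's relative nerve, or classification diagram, $N_{\mathrm{rel}}\colon\cat{RelCat}\to s\Sp$, which sends a relative category $(\cat{C},w\cat{C})$ to the simplicial space whose space of $n$-simplices is the nerve of the category of functors $[n]\to\cat{C}$ with pointwise weak equivalences between them as morphisms. As noted in the introduction, this functor factors through $\ICat$, so the weak equivalences described in the statement are exactly the maps of relative categories whose image in $\ICat$ is a Rezk equivalence; via theorem~\ref{theo-main} one may therefore read the target model category as $\cat{CSS}_{inj}$, $\cat{CSS}_{proj}$, or $\ICat$ interchangeably. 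The plan is to realize the desired model structure as one lifted from $\cat{CSS}_{inj}$ along $N_{\mathrm{rel}}$.

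First I would record that $N_{\mathrm{rel}}$ preserves limits (it is assembled from nerves and from the limit-preserving functors $\cat{C}\mapsto w(\cat{C}^{[n]})$) and preserves filtered colimits, so that, $\cat{RelCat}$ being locally presentable, it admits a left adjoint $K$ by the adjoint functor theorem. One then \emph{wants} a model structure on $\cat{RelCat}$ whose weak equivalences and fibrations are created by $N_{\mathrm{rel}}$ from $\cat{CSS}_{inj}$, in the spirit of theorem~\ref{theo-transferred model structure}. Granting such a structure, $N_{\mathrm{rel}}$ preserves and reflects weak equivalences by construction, so by the standard recognition criterion for Quillen equivalences it remains only to check that the derived unit $X\to\R N_{\mathrm{rel}}\,\L K(X)$ is a weak equivalence for cofibrant $X$. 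This one would verify on the generating objects, using that $N_{\mathrm{rel}}K$ is essentially the identity on the representing relative posets and Rezk's analysis of which complete Segal spaces arise, up to levelwise equivalence, as classification diagrams.

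The hard part is precisely the step that the transfer theorem cannot supply directly: the left adjoint $K$ of the classification diagram is badly behaved, so one cannot simply verify the smallness and cofibration hypotheses of theorem~\ref{theo-transferred model structure} and read off the model structure formally. Barwick and Kan circumvent this by a more hands-on argument, establishing that $N_{\mathrm{rel}}$ \emph{creates} a cofibrantly generated model structure through an explicit description of the generating (trivial) cofibrations and a careful study of the homotopy theory of relative categories. This creation result, rather than a formal transfer, is the genuine obstacle and constitutes the main technical content of~\cite{barwickrelative}. Once it is in place, the Quillen equivalence with $\cat{CSS}_{inj}$, and hence with every model category appearing in the diagram of nine model categories above, follows by two-out-of-three for Quillen equivalences.
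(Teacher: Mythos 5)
Your proposal is correct and takes the same approach as the paper: the paper's entire proof is the single sentence that this is the main theorem of~\cite{barwickrelative}, exactly the citation you lead with. Your additional sketch of the Barwick--Kan strategy is reasonable but not needed, and the paper does not attempt it.
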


\begin{proof}
This is the main theorem of~\cite{barwickrelative}.
\end{proof}

The following result will be our main tool to prove that certain maps are weak equivalences of relative categories

\begin{prop}\label{prop-equivalence relative}
Let $(F,wF):(\cat{C},w\cat{C})\to(\cat{D},w\cat{D})$ and $(G,wG):(\cat{D},w\cat{D})\to(\cat{C},w\cat{C})$ be two maps between relative categories. Assume that there exists a zig-zag of natural transformations between $FG$ and $\id_\cat{D}$ and another zig-zag of natural transformations between $GF$ and $\id_{\cat{C}}$. Assume further that both zig-zags are objectwise zig-zags of weak equivalences. Then $F$ and $G$ are weak equivalences of relative categories. 
\end{prop}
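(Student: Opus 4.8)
The plan is to pass to the homotopy category. Write $N$ for Rezk's relative nerve, so that by the Barwick--Kan theorem a relative functor is a weak equivalence of relative categories precisely when $N$ of it is a weak equivalence in $\cat{CSS}_{inj}$, equivalently an isomorphism in $\on{Ho}(\cat{CSS}_{inj})$. Since $N$ is functorial, $N(FG)=N(F)N(G)$ and $N(GF)=N(G)N(F)$, so it suffices to show that $N(F)$ and $N(G)$ are mutually inverse isomorphisms in $\on{Ho}(\cat{CSS}_{inj})$. Given the two zig-zags of objectwise weak equivalences, this reduces to the following key lemma: \emph{if $\eta\colon H\Rightarrow H'$ is a natural transformation between relative functors $\cat{C}\to\cat{D}$ which is objectwise a weak equivalence, then $N(H)=N(H')$ in $\on{Ho}(\cat{CSS}_{inj})$}. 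Indeed, applying this repeatedly along the zig-zag connecting $FG$ to $\id_{\cat{D}}$ gives $N(F)N(G)=N(FG)=N(\id_{\cat{D}})=\id$, and symmetrically $N(G)N(F)=\id$, so that $N(F)$ and $N(G)$ are mutually inverse in $\on{Ho}(\cat{CSS}_{inj})$ and both are weak equivalences.

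To prove the key lemma, let $\cat{K}$ denote the relative category whose underlying category is $[1]$ and in which every morphism is a weak equivalence. An objectwise weak equivalence natural transformation $\eta\colon H\Rightarrow H'$ is the same data as a relative functor $\bar\eta\colon \cat{C}\times\cat{K}\to\cat{D}$: the morphism $0\to 1$ of $\cat{K}$ records the components $\eta_c$, which are weak equivalences by hypothesis, so $\bar\eta$ sends weak equivalences of $\cat{C}\times\cat{K}$ to weak equivalences. Writing $j_0,j_1\colon\cat{C}\to\cat{C}\times\cat{K}$ for the two endpoint inclusions $c\mapsto(c,0)$ and $c\mapsto(c,1)$, and $\pi\colon\cat{C}\times\cat{K}\to\cat{C}$ for the projection, we have $\bar\eta j_0=H$, $\bar\eta j_1=H'$ and $\pi j_0=\pi j_1=\id_{\cat{C}}$. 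The plan is to show that $N(\pi)$ is an isomorphism in $\on{Ho}(\cat{CSS}_{inj})$; then $N(j_0)$ and $N(j_1)$ are both right inverses to it and hence equal, whence $N(H)=N(\bar\eta)N(j_0)=N(\bar\eta)N(j_1)=N(H')$.

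It remains to understand $N(\pi)$. First, the classification diagram preserves products on the nose: a functor $[m]\times[n]\to\cat{C}\times\cat{D}$ is a pair of functors, and a morphism of $\cat{C}\times\cat{D}$ is a weak equivalence if and only if both of its components are, so $N(\cat{C}\times\cat{D})\cong N(\cat{C})\times N(\cat{D})$ as simplicial spaces. Thus $N(\pi)$ is identified with $\id_{N(\cat{C})}\times N(r)$, where $r\colon\cat{K}\to *$ is the projection. Next I would compute $N(\cat{K})$ levelwise: since every morphism of $\cat{K}$ is a weak equivalence, the space $N(\cat{K})_m$ is the nerve of the whole functor category $\on{Fun}([m],[1])$, which is the poset $[m+1]$, so $N(\cat{K})_m\cong\Delta[m+1]$ is contractible. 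Hence $N(r)$ is a levelwise weak equivalence, and since weak equivalences in $\Sp$ are stable under finite products, so is $\id_{N(\cat{C})}\times N(r)$. A levelwise (Reedy) weak equivalence is a weak equivalence in $\cat{CSS}_{inj}$, so $N(\pi)$ is an isomorphism in $\on{Ho}(\cat{CSS}_{inj})$, completing the argument.

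The main obstacle, and the only step requiring genuine care, is the identification of $N(\cat{K})$ with a levelwise contractible simplicial space: one must unwind Rezk's definition of the classification diagram, check that the subcategory of natural weak equivalences of $\on{Fun}([m],\cat{K})$ is all of $\on{Fun}([m],[1])\cong[m+1]$, and confirm that the resulting levelwise equivalence is detected as a $\cat{CSS}_{inj}$ equivalence. Everything else is formal manipulation in the homotopy category.
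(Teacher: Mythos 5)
Your proof is correct, but it takes a genuinely different route from the paper's. The paper's proof of proposition~\ref{prop-equivalence relative} is essentially a citation: it invokes Proposition 7.5 of Barwick--Kan, which converts the two zig-zags of natural weak equivalences into the statement that $N(F)$ and $N(G)$ are homotopy equivalences of simplicial spaces, and then uses the fact that a homotopy equivalence of simplicial spaces is a levelwise, hence Rezk, weak equivalence; the Barwick--Kan description of weak equivalences of relative categories finishes the argument. You instead re-prove the needed input from scratch, working in $\on{Ho}(\cat{CSS}_{inj})$: your key lemma --- that a natural transformation between relative functors which is objectwise a weak equivalence induces \emph{equal} maps in the homotopy category --- is exactly what the citation supplies, and your cylinder argument via the relative interval $\cat{K}$ is sound. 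Its three computational inputs all check out: Rezk's classification diagram preserves finite products on the nose (a morphism of a product relative category is a weak equivalence precisely when both components are); $N(\cat{K})_m$ is the nerve of the full functor poset $\on{Fun}([m],[1])\cong[m+1]$, hence isomorphic to $\Delta[m+1]$, so $N(\cat{K})\to *$ is a levelwise weak equivalence; and levelwise weak equivalences remain weak equivalences in the Bousfield localization $\cat{CSS}_{inj}$. The remaining steps (uniqueness of right inverses of the isomorphism $N(\pi)$, and the implicit use of saturation, i.e.\ that a map in a model category is a weak equivalence if and only if it becomes an isomorphism in the homotopy category) are also fine. In effect you have inlined a proof of the cited Barwick--Kan result: what your argument buys is self-containedness and an explicit identification of the mechanism (levelwise contractibility of the nerve of the walking weak equivalence), while the paper's buys brevity. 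One caveat, shared by both arguments and implicit in the statement: the intermediate functors appearing in the zig-zags must themselves be relative functors, since otherwise neither their nerves (in your argument) nor Barwick--Kan's strict homotopies (in the paper's) are defined; this holds in the paper's application, corollary~\ref{coro-Quillen equivalences are relative equivalences}, where the intermediate functors are the replacement functors $Q$ and $R$.
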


\begin{proof}
This follows from~\cite[Proposition 7.5.]{barwickrelative} together with the fact that homotopy equivalences between simplicial spaces are levelwise (and in particular Rezk) weak equivalences. 
\end{proof}

If $F$ and $G$ satisfy the condition of the previous proposition, we will say that $G$ is a homotopy inverse of $F$. Note that not all weak equivalences of relative categories admit a homotopy inverse. 

\begin{coro}\label{coro-Quillen equivalences are relative equivalences}
Let $F:\cat{X}\leftrightarrows \cat{Y}:G$ be a Quillen equivalence between cofibrantly generated model categories. Let $Q$ be a cofibrant replacement functor on $\cat{X}$ and $R$ be a fibrant replacement functor on $\cat{Y}$, then $FQ$ and $GR$ are equivalences of relative categories.
\end{coro}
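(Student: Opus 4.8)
The plan is to apply Proposition~\ref{prop-equivalence relative} to the relative functors $FQ\colon\cat{X}\to\cat{Y}$ and $GR\colon\cat{Y}\to\cat{X}$, exhibiting each as a homotopy inverse of the other. So the two things I need are: that $FQ$ and $GR$ are genuinely maps of relative categories (i.e. preserve weak equivalences), and that there are natural zig-zags of weak equivalences between the two round-trip composites and the respective identities.

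First I would check that $FQ$ and $GR$ preserve weak equivalences. If $f\colon X\to X'$ is a weak equivalence in $\cat{X}$, then $Qf$ is a weak equivalence between cofibrant objects, so by Ken Brown's lemma the left Quillen functor $F$ sends it to a weak equivalence; hence $FQf$ is a weak equivalence. Dually, $R$ carries a weak equivalence to a weak equivalence between fibrant objects, and the right Quillen functor $G$ preserves weak equivalences between fibrant objects by Ken Brown's lemma, so $GR$ preserves weak equivalences.

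Next I would produce the two required zig-zags. The key input is that, because $(F,G)$ is a Quillen equivalence, the derived unit $W\to GR(FW)$ is a weak equivalence for every cofibrant $W$ and the derived counit $FQ(GZ)\to Z$ is a weak equivalence for every fibrant $Z$; both facts follow from the characterization of Quillen equivalences in terms of adjoint maps between cofibrant and fibrant objects. Applying the derived unit at the cofibrant object $QX$ gives a natural weak equivalence $QX\to GRFQX$, which together with the natural weak equivalence $QX\to X$ yields the natural zig-zag
\[ X \xleftarrow{\ \sim\ } QX \xrightarrow{\ \sim\ } GRFQX = (GR)(FQ)(X). \]
Dually, applying the derived counit at the fibrant object $RY$ gives a natural weak equivalence $FQGRY\to RY$, which together with the natural weak equivalence $Y\to RY$ yields the natural zig-zag
\[ (FQ)(GR)(Y) = FQGRY \xrightarrow{\ \sim\ } RY \xleftarrow{\ \sim\ } Y. \]
Each zig-zag is natural because it is assembled from the natural transformations $Q\to\id_{\cat{X}}$, $\id_{\cat{Y}}\to R$, the unit, and the counit.

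Finally I would invoke Proposition~\ref{prop-equivalence relative} with its $F$ and $G$ taken to be $FQ$ and $GR$: both displays are objectwise zig-zags of weak equivalences, so the hypotheses are met and $FQ$, $GR$ are equivalences of relative categories. I do not expect a serious obstacle here; the only point requiring care is to invoke the \emph{derived} unit and counit, evaluated at $QX$ and $RY$ respectively, rather than the plain unit and counit, since the latter need not be weak equivalences.
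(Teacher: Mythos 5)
Your proposal is correct and follows essentially the same route as the paper: both construct the natural zig-zags $X \leftarrow QX \to GRFQ(X)$ and $FQGR(Y) \to RY \leftarrow Y$ from the derived unit and counit of the Quillen equivalence, and then invoke proposition~\ref{prop-equivalence relative}. Your explicit check via Ken Brown's lemma that $FQ$ and $GR$ preserve weak equivalences is a small point the paper leaves implicit, but it is not a different argument.
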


\begin{proof}
By definition of a Quillen equivalence, the map $FQGR(Y)\to FGR(Y)\to RY$ induced by the natural transformation $Q\to \id_{\cat{X}}$ and the counit $FG\to \id_{\cat{D}}$ is a weak equivalence which is natural in $Y$, thus we have a functorial zig-zag of weak equivalences
\[FQGR(Y)\to RY\leftarrow Y\]

Similarly, we have a functorial zig-zag of weak equivalences
\[X\leftarrow QX\to GRFQ(X)\]

Thus the conditions of proposition~\ref{prop-equivalence relative} are satisfied.
\end{proof}

\subsection{The internalization functor.}

The category $\Cat_{\Delta}$ of simplicial categories is the full subcategory of $\ICat$ spanned by internal categories whose space of objects is discrete. In this section, the inclusion functor $\Cat_{\Delta}\to \ICat$ shall be denoted $\Int$ (for internalization).

If $C$ is a simplicial category, we denote by $\map_C(x,y)$ the fiber of the structure map $C\to\Ob(C)\times\Ob(C)$ over the point $(x,y)$. We denote by $\on{Ho}(C)$ the ordinary category obtained by applying $\pi_0$ to each mapping space.

\begin{defi}
A map $f:C\to D$ between simplicial categories is said to be
\begin{itemize}
\item essentially surjective if the induced map $\on{Ho}(C)\to\on{Ho}(D)$ is essentially surjective.
\item fully faithful if for each pair $(x,y)$ of objects of $C$, the induced map
\[\map_C(x,y)\to\map_D(f(x),f(y))\]
is a weak equivalence.
\item a Dwyer-Kan equivalence if it is both essentially surjective and fully faithful.
\end{itemize}
\end{defi}

\begin{rem}
It is proved in~\cite{bergnermodel} that the category $\Cat_{\Delta}$ has a model structure in which the weak equivalences are the Dwyer-Kan equivalences and the fibrant objects are the simplicial categories whose mapping spaces are fibrant simplicial sets.
\end{rem}

\begin{prop}
Let $f:C\to D$ be a map between fibrant simplicial categories. Then $f$ is
\begin{enumerate}
\item fully faithful if and only if $\Int(f)$ is fully faithful.
\item essentially surjective if and only if $\Int(f)$ is essentially surjective.
\end{enumerate}
\end{prop}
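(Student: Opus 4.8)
The plan is to reduce each equivalence to a pointwise comparison and then invoke the structural results already established. First I would record that a fibrant simplicial category is strongly Segal: its space of objects is discrete, hence fibrant, and since every mapping space is a Kan complex, the source and target maps $\Ar(C)\to\Ob(C)$ into the discrete object $\Ob(C)$ are fibrations. By the proposition that a strongly Segal internal category is fibrant in $\ICat^S$, the internal categories $\Int(C)$ and $\Int(D)$ are Segal fibrant, so that the notions of fully faithful and essentially surjective map of internal categories genuinely apply to $\Int(f)$.

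For part (1), I would analyze the square defining full faithfulness of $\Int(f)$,
\[
\xymatrix{
\Ar(C)\ar[d]_{(d_0,d_1)}\ar[r]&\Ar(D)\ar[d]^{(d_0,d_1)}\\
\Ob(C)\times\Ob(C)\ar[r]&\Ob(D)\times\Ob(D)
}
\]
Since $\Ob(C)$ and $\Ob(D)$ are discrete, all four corners are fibrant and both vertical maps are fibrations whose fiber over a point $(x,y)$ is precisely $\map_C(x,y)$, respectively $\map_D(f(x),f(y))$. By proposition~\ref{prop-characterization homotopy cartesian}, this square is homotopy cartesian if and only if for every $(x,y)$ the induced map on homotopy fibers is a weak equivalence. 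Because the vertical maps are fibrations, these homotopy fibers may be taken to be the strict fibers, and the induced map is exactly $\map_C(x,y)\to\map_D(f(x),f(y))$. Hence $\Int(f)$ is fully faithful if and only if $f$ is.

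For part (2), I would identify the set $\pi_0(\Ob(\Int C))/\!\sim$ with the set $\on{Ho}(C)/\!\cong$ of isomorphism classes of objects of $\on{Ho}(C)$. As $\Ob(C)$ is discrete, $\pi_0(\Ob(\Int C))=\Ob(C)$ and the relation $\sim$ is generated by $d_0(g)\sim d_1(g)$ as $g$ ranges over $\pi_0(C_{hoequiv})$. By corollary~\ref{coro-equivalence in a simplicial category}, a component of $\Ar(C)$ lies in $C_{hoequiv}$ exactly when it represents an isomorphism of $\on{Ho}(C)$; therefore the pairs $(d_0(g),d_1(g))$ are precisely the pairs of objects that are isomorphic in $\on{Ho}(C)$. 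Since isomorphism in a category is already an equivalence relation, this generated relation coincides with being isomorphic in $\on{Ho}(C)$, and so $\pi_0(\Ob(\Int C))/\!\sim\;=\;\on{Ho}(C)/\!\cong$, naturally in $C$. Under this identification the map induced by $\Int(f)$ is the map on isomorphism classes induced by $\on{Ho}(f)$, which is surjective if and only if $\on{Ho}(f)$ is essentially surjective, i.e.\ if and only if $f$ is essentially surjective.

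The routine part is (1), where the homotopy-fiber characterization of proposition~\ref{prop-characterization homotopy cartesian} does all the work once discreteness of the object spaces is used to turn homotopy fibers into mapping spaces. The heart of the argument is (2): the key input is corollary~\ref{coro-equivalence in a simplicial category}, and the main point to be careful about is verifying that the relation \emph{generated} by $C_{hoequiv}$ already coincides with being isomorphic in $\on{Ho}(C)$, rather than merely generating it, which is what lets the quotient be identified with isomorphism classes on the nose.
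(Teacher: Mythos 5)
Your proof is correct and follows essentially the same route as the paper: part (1) via the fibration square and proposition~\ref{prop-characterization homotopy cartesian}, and part (2) via corollary~\ref{coro-equivalence in a simplicial category} identifying $\pi_0(\Ob(\Int C))/\!\sim$ with $\on{Ho}(C)/\!\cong$. The only differences are expository—you justify Segal fibrancy through the strongly Segal property and spell out why the generated relation already equals isomorphism in $\on{Ho}(C)$, points the paper leaves implicit.
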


\begin{proof}
Observe first that if $C$ is a fibrant simplicial category, then $\Int(C)$ is a Segal fibrant internal category. Without this observation, the proposition would not make sense.

(1) By definition, the map $\Int(f)$ is fully faithful if and only if  the square
\[
\xymatrix{
\Ar(\Int(C))\ar[r]\ar[d]_{(s,t)}& \Ar(\Int(D))\ar[d]^{(s,t)}\\
\Ob(C)\times \Ob(C)\ar[r]&\Ob(D)\times \Ob(D)
}
\]
is homotopy cartesian. Observe that both vertical maps in this square are fibrations. Thus, by proposition~\ref{prop-characterization homotopy cartesian}, $\Int(f)$ is fully faithful if and only if for any $(c,d)\in \Ob(C)\times\Ob(C)$, the induced map $\map_C(c,d)\to\map_D(fc,fd)$ is a weak equivalence, which is exactly saying that the map $f$ is fully faithful.

(2) The essential surjectivity of $f$ is equivalent to the surjectivity of the induced map on isomorphism classes $\on{Ho}(C)/\cong\to\on{Ho}(D)/\cong$ while the essential surjectivity of $\Int(f)$ is equivalent to the surjectivity of 
\[\pi_0(\Ob(\Int(C)))/\sim\to\pi_0(\Ob(\Int(D)))/\sim.\]

Thus in order to show that the two notions coincide, it suffices to show that the two functors $C\mapsto \on{Ho}(C)/\cong $ and $C\mapsto \pi_0(\Ob(\Int(C)))/\sim$ are naturally isomorphic. There is a surjective map $\Ob(C)\to \pi_0(\Ob(\Int(C)))/\sim$. According to corollary~\ref{coro-equivalence in a simplicial category}, this functor induces an isomorphism
\[\on{Ho}(C)/\cong\goto{}\pi_0(\Ob(\Int(C)))/\sim\]
This isomorphism is obviously natural in $C$.
\end{proof}

\begin{prop}\label{prop-Int reflects weak equivalences}
The functor $\Int:\Cat_{\Delta}\to \ICat$ preserves and reflects weak equivalences.
\end{prop}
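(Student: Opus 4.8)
The plan is to reduce the statement to the case of fibrant simplicial categories, where the previous proposition combined with proposition~\ref{prop-Dwyer Kan equivalences are Rezk equivalences ICat} already identifies the two notions of equivalence, and then to control the passage to fibrant objects on both sides via a single naturality square. First I would fix a fibrant replacement functor $R$ on $\Cat_{\Delta}$ that is the identity on objects. Concretely, let $R$ be obtained by applying $\on{Ex}^\infty$ to each mapping space: since $\on{Ex}$ is a right adjoint it preserves finite products, and $\on{Ex}^\infty$ is a filtered colimit of such functors (and filtered colimits commute with finite products in $\Sp$), so $\on{Ex}^\infty$ still preserves finite products. Hence the composition and unit maps of a simplicial category are carried to those of a new simplicial category $RC$ with the same set of objects, and the natural map $C\to RC$ is the identity on objects and a levelwise weak equivalence into a Kan complex on each mapping space. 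In particular $C\to RC$ is a Dwyer-Kan equivalence and $RC$ is fibrant in Bergner's model structure.

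Next I would record the key compatibility: if $g:C\to C'$ is the identity on objects and a weak equivalence on each mapping space, then $\Int(g)$ is a levelwise weak equivalence. Indeed, since $\Ob(C)$ is discrete, the space $N(\Int C)_n$ is the disjoint union, over $(x_0,\dots,x_n)\in\Ob(C)^{n+1}$, of the products $\map_C(x_0,x_1)\times\cdots\times\map_C(x_{n-1},x_n)$, and in degree $n$ the map $\Int(g)$ is a coproduct, over the same indexing set, of products of weak equivalences, hence a weak equivalence. Applying this to $g=(C\to RC)$ shows that $\Int(C\to RC)$ is a levelwise, and therefore Rezk, weak equivalence, and likewise for $D$.

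Now for an arbitrary map $f:C\to D$ I would use the naturality square of $R$ together with two-out-of-three on each side. On the $\Cat_{\Delta}$ side the vertical maps $C\to RC$ and $D\to RD$ are Dwyer-Kan equivalences, so $f$ is a Dwyer-Kan equivalence if and only if $Rf$ is. On the $\ICat$ side, $\Int$ applied to these vertical maps gives Rezk equivalences by the previous paragraph, so $\Int(f)$ is a Rezk equivalence if and only if $\Int(Rf)$ is. Finally, since $RC$ and $RD$ are fibrant simplicial categories, $\Int(RC)$ and $\Int(RD)$ are Segal fibrant internal categories; the previous proposition then gives that $Rf$ is fully faithful (resp. essentially surjective) if and only if $\Int(Rf)$ is, while proposition~\ref{prop-Dwyer Kan equivalences are Rezk equivalences ICat} identifies Dwyer-Kan equivalences with Rezk equivalences among Segal fibrant internal categories. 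Chaining these biconditionals yields that $f$ is a Dwyer-Kan equivalence if and only if $\Int(f)$ is a Rezk equivalence, which is precisely the assertion that $\Int$ preserves and reflects weak equivalences.

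The hard part is the choice of fibrant replacement: one must replace the mapping spaces without disturbing the set of objects and in a way compatible with composition, so that $\Int$ of the replacement map is \emph{visibly} a levelwise equivalence via the coproduct-of-products description above. The product preservation of $\on{Ex}^\infty$ is exactly what makes this work; a replacement that enlarged or altered the object set would destroy that clean description and force one to argue Rezk-equivalence of $\Int(C\to RC)$ by less elementary means.
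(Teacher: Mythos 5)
Your proposal is correct and follows essentially the same route as the paper: replace $C$ and $D$ fibrantly via a product-preserving functor like $\on{Ex}^\infty$ applied to mapping spaces (keeping the object set fixed), observe that $\Int$ of the replacement maps is a levelwise hence Rezk equivalence, and then chain two-out-of-three with the previous proposition and proposition~\ref{prop-Dwyer Kan equivalences are Rezk equivalences ICat}. You merely spell out details the paper leaves implicit (product preservation of $\on{Ex}^\infty$ and the coproduct-of-products description of the nerve), which is fine.
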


\begin{proof}
For $C$ a simplicial category, there is a fibrant replacement $C'$ with the same set of objects (one can apply a product preserving fibrant replacement in $\Sp$ to each mapping space like $\on{Ex}^{\infty}$). The map $C\to C'$ is a levelwise equivalence on nerve, hence $\Int(C)\to\Int(C')$ is a Rezk equivalence.

Let $f:C\to D$ be a map in $\Cat_{\Delta}$. We can include it in a diagram
\[
\xymatrix{
C\ar[r]^f\ar[d]&D\ar[d]\\
C'\ar[r]^{f'}&D'
}
\]
in which each of the vertical maps is a fibrant replacement as above. The functor $\Int$ applied to the vertical maps yields levelwise weak equivalences. Thus, $\Int(f)$ is a Rezk equivalence if and only if $\Int(f')$ is a Rezk equivalence. According to the previous proposition, the map $\Int(f')$ is a Dwyer-Kan equivalence if and only if the map $f'$ is a Dwyer-Kan equivalence. Since $\Int(C')$ and $\Int(D')$ are Segal fibrant internal categories, we see that $f'$ is a Dwyer-Kan equivalence if and only if $\Int(f')$ is a Rezk equivalence. Since Dwyer-Kan equivalences in $\Cat_{\Delta}$ satisfy the two-out-of-three property, $f'$ is a Dwyer-Kan equivalence if and only if $f$ is one. If we put together these three equivalences, we have proved that $f$ is a Dwyer-Kan equivalence if and only if $\Int(f)$ is a Rezk equivalence. 
\end{proof}

\subsection{The equivalence of relative categories.}

We have proved in proposition~\ref{prop-Int reflects weak equivalences}, that $\Int$ sends Dwyer-Kan equivalences to Rezk equivalences thus we can see $\Int$ as a map of relative categories.
\[\Int:\Cat_{\Delta}\to\ICat\]

\begin{theo}\label{theo-Int is an equivalence}
The functor $\Int$ induces a weak equivalence of relative categories.
\end{theo}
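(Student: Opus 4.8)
The plan is to factor $\Int$ through the nerve and reduce to a comparison already available in the literature, exploiting the two-out-of-three property for weak equivalences of relative categories (valid because these are the weak equivalences of the Barwick--Kan model structure). First I would record that the right Quillen functor $N:\ICat\to\cat{CSS}_{proj}$ of the Quillen equivalence of theorem~\ref{theo-main} is itself a weak equivalence of relative categories. By corollary~\ref{coro-Quillen equivalences are relative equivalences}, the functor $NR$, with $R$ a fibrant replacement in $\ICat$, is such an equivalence; since $N$ preserves and reflects all weak equivalences, the natural transformation $N\to NR$ induced by $\id\to R$ is an objectwise weak equivalence, so $N$ and $NR$ induce the same map of relative categories and hence $N$ is a weak equivalence of relative categories. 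I would also note that $\cat{CSS}_{proj}$ and $\cat{CSS}_{inj}$ have the same underlying category and the same weak equivalences, so they are literally the same relative category and may be interchanged freely.

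The crux is to prove that the composite relative functor $N\circ\Int:\Cat_{\Delta}\to\cat{CSS}_{proj}$ is a weak equivalence of relative categories. Concretely, for a simplicial category $C$ the simplicial space $N\Int C$ has $n$-th space
\[
\coprod_{x_0,\ldots,x_n}\map_C(x_0,x_1)\times\cdots\times\map_C(x_{n-1},x_n),
\]
i.e. it is exactly the strict Segal category attached to $C$, the image of $C$ under the classical inclusion of simplicial categories into Segal categories. I would therefore identify $N\Int$, up to a natural zig-zag of Rezk weak equivalences, with the composite of the derived functors of the Quillen equivalences of Bergner and Joyal--Tierney between $\Cat_{\Delta}$, Segal categories, and $\cat{CSS}_{inj}$ (see~\cite{bergnerthree,joyaltierneyquasi}). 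The relevant zig-zag is the completion map: for $C$ fibrant, $\Int C$ is a Segal fibrant internal category (as observed just before the present theorem), so $N\Int C$ is a Segal fibrant simplicial space, and its Rezk fibrant replacement $N\Int C\to R(N\Int C)$ is a Rezk weak equivalence exhibiting $R(N\Int C)$ as the complete Segal space associated to $C$. By corollary~\ref{coro-Quillen equivalences are relative equivalences} the literature composite is a weak equivalence of relative categories, and two relative functors related by an objectwise weak equivalence induce the same map of relative categories (compare the proof of proposition~\ref{prop-equivalence relative}); hence $N\Int$ is a weak equivalence of relative categories.

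Finally, since $N\Int=N\circ\Int$ and both $N\Int$ and $N$ are weak equivalences of relative categories, the two-out-of-three property yields that $\Int$ is a weak equivalence of relative categories. Here I use proposition~\ref{prop-Int reflects weak equivalences} to know that $\Int$ is a relative functor at all, so that the composition makes sense. The main obstacle is the middle step: pinning down the identification of $N\Int$ with the classical comparison functor and verifying that the completion map supplies the required natural zig-zag of Rezk equivalences. Everything else is formal bookkeeping with the relative-category machinery recalled in this section, together with the fact that $N$ preserves and reflects weak equivalences established in theorem~\ref{theo-main}.
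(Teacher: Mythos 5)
Your proposal is correct and follows essentially the same route as the paper: reduce along $N$ by two-out-of-three (using corollary~\ref{coro-Quillen equivalences are relative equivalences} and the fact that $N$ preserves and reflects weak equivalences), observe that $N\Int C$ is exactly the strict Segal precategory of $C$, and conclude with Bergner's comparison results. The difference lies in how the identification with the literature comparison functor is carried out, which is precisely the step you flag as the main obstacle, and here the paper's organization is worth noting because it makes that step evaporate. The paper first proves that for any cofibrantly generated model category the inclusion of fibrant objects $\cat{Y}^f\to\cat{Y}$ is a relative equivalence, and accordingly proves the theorem for $\Int$ restricted to $\Cat^f_{\Delta}$. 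On fibrant simplicial categories no zig-zag identification is needed at all: $N\circ\Int$ is \emph{literally equal} to the composite of the two inclusions $\Cat^f_{\Delta}\to\cat{SeCat}_f\to\cat{CSS}_{proj}$, the first of which is a relative equivalence by the fibrant-objects observation applied to the right Quillen equivalence of~\cite[Theorem 8.6.]{bergnerthree}, and the second by corollary~\ref{coro-Quillen equivalences are relative equivalences} applied to the left Quillen equivalence of~\cite[Theorem 6.3.]{bergnerthree}, every object of $\cat{SeCat}_c$ being cofibrant. If instead you insist on working with all of $\Cat_{\Delta}$, the zig-zag you need is not the completion map $N\Int C\to R(N\Int C)$ --- postcomposing a relative functor with a fibrant replacement in the target never changes whether it is a relative equivalence --- but rather the natural map $N\Int C\to N\Int(R_B C)$ with $R_B$ a Bergner fibrant replacement, which is a Rezk equivalence by proposition~\ref{prop-Int reflects weak equivalences} together with theorem~\ref{theo-main}; with that substitution your argument closes. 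Note also that only Bergner's theorems are needed here; the Joyal--Tierney equivalences do not enter.
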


\begin{proof}
In order to prove this result, we will use Bergner's work in~\cite{bergnerthree}. We recall the definition of a Segal precategory. This is a diagram $\Delta\op\to\Sp$ whose value at $[0]$ is a discrete space (i.e. a constant simplicial set). 

In this proof, given a model category $\cat{X}$, we denote by $\cat{X}^f$ the relative categories of fibrant objects of $\cat{X}$ with the induced weak equivalences.

(1) If $F:\cat{X}\leftrightarrows \cat{Y}:G$ is a Quillen equivalence between cofibrantly generated model categories, the composite $\cat{Y}^f\to\cat{Y}\goto{G}\cat{X}$ is an equivalence of relative categories. Indeed, it has a homotopy inverse given by $RFQ:\cat{X}\to\cat{Y}^f$. We can apply this observation to the identity adjunction $\cat{Y}\leftrightarrows\cat{Y}$ and we find that the inclusion $\cat{Y}^f\to\cat{Y}$ is a weak equivalence of relative categories.

(2) Weak equivalences of relative categories satisfy the two-out-of-three properties, thus according to (1), it suffices to prove that the inclusion $\Int:\Cat^f_{\Delta}\to \ICat$ is a weak equivalence. Using again the two-out-of-three property and corollary~\ref{coro-Quillen equivalences are relative equivalences}, it suffices to check that the composite
\[N\circ\Int:\Cat^f_{\Delta}\to\cat{CSS}_{proj}\]
is an equivalence of relative categories.

(3) Bergner in~\cite[Theorem 8.6.]{bergnerthree} shows that the obvious inclusion $\Cat_{\Delta}\to \cat{SeCat}_{f}$ from the category of simplicial categories to the category of Segal precategories with the projective model structure (defined in~\cite[Theorem 7.1.]{bergnerthree}) is a right Quillen equivalence. This implies by (1) that the induced map $\Cat_{\Delta}^f\to \cat{SeCat}_{f}$ is a weak equivalence of relative categories.

(4) Similarly Bergner shows in~\cite[Theorem 6.3.]{bergnerthree} that the obvious inclusion $\cat{SeCat}_{c}\to\cat{CSS}_{inj}$ is a left Quillen equivalence. The model category $\cat{SeCat}_c$ is an other model structure on Segal precategories constructed in~\cite[Theorem 5.1.]{bergnerthree} in which all objects are cofibrant but with the same weak equivalences as $\cat{SeCat}_f$. Thus, by corollary~\ref{coro-Quillen equivalences are relative equivalences}, the inclusion $\cat{SeCat}_{c}\to\cat{CSS}_{inj}$ is a weak equivalence of relative categories. Since $\cat{SeCat}_c=\cat{SeCat}_f$ and $\cat{CSS}_{inj}=\cat{CSS}_{proj}$ as relative categories, we see that the inclusion $\cat{SeCat}_f\to\cat{CSS}_{proj}$ is a weak equivalence of relative categories.

(5) Coming back to (2), the map $N\circ\on{Int}:\Cat_\Delta^f\to\cat{CSS}_{proj}$ coincides with the composite of the two inclusions
\[\Cat_{\Delta}^f\to\cat{SeCat}_{f}\to\cat{CSS}_{proj}\]
and we have seen in (3) and (4) that both maps are weak equivalences of relative categories.
\end{proof}

\bibliographystyle{alpha}
\bibliography{biblio}

\end{document}